\documentclass{amsart}
\usepackage{amssymb}
\usepackage{amsthm}
\usepackage{epsfig,color}
\usepackage{enumerate}

\newcommand{\mc}[1]{\mathcal{#1}}
\newcommand{\twid}[1]{\tilde{#1}}

\def\co{\colon\thinspace}
\def\R{\mathbb{R}}

\def\N{\mathbb{N}}

\newcommand{\extend}[1]{\check{{#1}}}
\newcommand{\straightedge}[1]{{#1}_{\mc{T}}}

\newcommand{\skel}{\mathrm{Skel}}
\newcommand{\skelmap}{\straightedge{\ddot{{\phi}}}}
\newcommand{\skelfill}{\ensuremath{\skel(\straightedge{\extend{\phi}})}}
\newcommand{\Lk}{\mathrm{Lk}}
\def\Hbound {\partial_{\mc{H}}X}
\newcommand{\stab}{\text{Stab}}

\def\logConst {\lambda(K+1)}
\def\Const {(3 + 40\delta + 2 \log_2(\logConst))}

\newtheorem{theorem}{Theorem}[section]
\newtheorem{lemma}[theorem]{Lemma}
\newtheorem{corollary}[theorem]{Corollary}
\newtheorem{proposition}[theorem]{Proposition}

\newtheorem{conjecture}[theorem]{Conjecture}
\newtheorem{observation}[theorem]{Observation}
\newtheorem{case}{Case}
\newtheorem{subcase}{Case}[case]

\newtheorem{claim}{Claim}[theorem]
\newtheorem{subclaim}{Subclaim}[claim]

\theoremstyle{definition}
\newtheorem{remark}[theorem]{Remark}
\newtheorem{definition}[theorem]{Definition}
\newtheorem{example}[theorem]{Example}

\begin{document}

\title{Residual finiteness, QCERF, and fillings of hyperbolic groups}

\author{Ian Agol}
\address{
   University of California, Berkeley,
   970 Evans Hall \#3840,
   Berkeley, CA 94720-3840}
\email{ianagol@math.berkeley.edu}

\author{Daniel Groves}
\address{Department of Mathematics, Statistics, and Computer Science,
University of Illinois at Chicago,
322 Science and Engineering Offices (M/C 249),
851 S. Morgan St.,
Chicago, IL 60607-7045}
\email{groves@math.uic.edu}

\author{Jason Fox Manning}
\address{Department of Mathematics, SUNY at Buffalo,
Buffalo, NY 14260-2900}
\email{j399m@buffalo.edu}

\thanks{The first author was partially supported by NSF grant DMS-0504975, and the
Guggenheim foundation.  The second 
author was supported by NSF grant DMS 0504251.}

\begin{abstract}
We prove that if every hyperbolic group is residually finite, then
every quasi-convex subgroup of every hyperbolic group is
separable. The main tool is relatively hyperbolic Dehn filling.
\end{abstract} 

\maketitle

\tableofcontents

A group $G$ is \emph{residually finite} (or \emph{RF}) if
for every $g\in G\smallsetminus\{1\}$, there is some finite group $F$
and an epimorphism $\phi\co G\to F$ so that $\phi(g)\neq 1$.
In more sophisticated language $G$ is RF if and only if
the trivial subgroup is closed in the profinite topology on $G$.

If $H<G$, then \emph{$H$ is separable} if for every $g\in
G\smallsetminus H$, there is some finite group $F$ and an epimorphism
$\phi\co G\to F$ so that $\phi(g)\notin \phi(H)$.  
Equivalently, the subgroup $H$
is separable in $G$ if it is closed in the profinite topology on
$G$.  

If every finitely generated subgroup of $G$ is separable,
$G$ is said to be \emph{LERF}, or \emph{subgroup separable}.  
If $G$ is hyperbolic, and every quasi-convex subgroup of
$G$ is separable, we say that $G$ is \emph{QCERF}.

In this paper, we show that if every hyperbolic group is RF, then
every hyperbolic group is QCERF.
\begin{theorem}\label{t:main}
If all hyperbolic groups are residually finite, then every
quasi-convex subgroup of a hyperbolic group is separable.
\end{theorem}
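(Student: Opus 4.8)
The plan is to induct on the \emph{height} of $H$ in $G$ (in the sense of Gitik--Mitra--Rips--Sageev), using relatively hyperbolic Dehn filling to replace a quasi-convex subgroup of one hyperbolic group by a quasi-convex subgroup of strictly smaller height in another hyperbolic group --- still residually finite by hypothesis --- until the subgroup is finite, where residual finiteness alone suffices.

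\textbf{Reductions.} It is enough to produce, for each $g\in G\smallsetminus H$, an epimorphism from $G$ onto a finite group whose value at $g$ avoids the image of $H$; moreover, since a finite quotient of any Dehn filling $\overline{G}$ of $G$ that separates the image $\overline{g}$ from the image $\overline{H}$ pulls back to such an epimorphism of $G$, it suffices to produce this after a suitable filling. A quasi-convex subgroup of a hyperbolic group is itself hyperbolic and has finite height; if $[G:H]<\infty$ the conclusion is automatic, and if $H$ is finite (height $0$) it follows from residual finiteness of the hyperbolic group $G$, since then only finitely many nontrivial elements ($g$ together with the elements $gh^{-1}$, $h\in H$) must survive to a finite quotient. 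So assume $H$ is infinite of infinite index, of height $n\ge 1$, and that the theorem holds for quasi-convex subgroups of height $<n$ in every hyperbolic group.

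\textbf{Peripheral structure and filling.} Using that $H$ has finite width, I would construct a finite malnormal quasi-convex collection $\mathbb{P}=\{P_1,\dots,P_k\}$ of \emph{proper} subgroups of $G$ with respect to which $G$ is relatively hyperbolic, such that $H$ is relatively quasi-convex and every essential self-overlap of $H$ --- every infinite group of the form $H\cap H^c$ with $H^c\ne H$ --- is conjugate into some $P_i$; equivalently, modulo the peripheral subgroups $H$ is already almost malnormal. (When $n=1$ one simply takes $\mathbb{P}=\{H\}$.) Each $P_i$, being quasi-convex in $G$, is hyperbolic, hence residually finite by hypothesis, so has finite-index normal subgroups of arbitrarily large depth; choosing such $N_i\lhd P_i$ deep enough, the relatively hyperbolic Dehn filling theorems of Osin and of Groves--Manning yield a hyperbolic quotient $\overline{G}=G/\langle\langle N_1\cup\cdots\cup N_k\rangle\rangle$, hyperbolic relative to the \emph{finite} subgroups $\overline{P_i}=P_i/N_i$, in which --- by the results guaranteeing survival of relatively quasi-convex subgroups and of elements outside them under deep fillings --- the image $\overline{H}$ is quasi-convex and $\overline{g}\notin\overline{H}$. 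Since every essential self-overlap of $H$ was carried by a peripheral subgroup, and these have become finite, no conjugate of $\overline{H}$ meets $\overline{H}$ in an infinite group beyond what is forced inside the finite $\overline{P_i}$; hence $\overline{H}$ has height strictly less than $n$ in $\overline{G}$. Applying the inductive hypothesis to the hyperbolic group $\overline{G}$, the quasi-convex subgroup $\overline{H}$, and the element $\overline{g}$ gives a finite quotient separating $\overline{g}$ from $\overline{H}$, which pulls back to one separating $g$ from $H$ and closes the induction.

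\textbf{Main obstacle.} The heart of the matter is the quantitative control of the filling: the $N_i$ must be chosen deep enough to simultaneously preserve hyperbolicity of $\overline{G}$, keep $\overline{H}$ quasi-convex and coarsely like $H$ away from the peripherals, keep $\overline{g}$ outside $H\cdot\ker(G\to\overline{G})$, and --- most delicately --- create \emph{no new} infinite intersections among conjugates of $\overline{H}$, so that the height genuinely drops; some care is also needed to ensure the peripheral subgroups $P_i$ can be taken of infinite index. Making this precise is the geometric core of the paper: one works in a cusped model for the pair $(G,\mathbb{P})$, ``straightens'' $H$ and its translates into quasigeodesic and quasi-convex position with constants controlled by $\delta$, the quasi-convexity constant, and the filling depth, and then argues by pigeonhole that all sufficiently deep fillings are injective on the finitely many bounded configurations that could cause trouble.
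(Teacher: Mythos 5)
Your proposal is essentially the paper's argument: induction on height, a peripheral structure on $G$ built from the infinite intersections of essentially distinct conjugates of $H$ (the ``malnormal core''), and a sufficiently deep $H$-filling along finite-index normal subgroups of the residually finite peripherals, with the quantitative work --- uniform hyperbolicity of the quotient, persistence of relative quasi-convexity, keeping $g$ out of the image of $H$, and the height drop --- correctly identified as the technical core. The one reduction you omit is passing first to a torsion-free finite-index subgroup of $G$ (available since $G$ is residually finite and has finitely many conjugacy classes of torsion elements); the filling theorems and the height-decrease argument as the paper runs them genuinely use torsion-freeness, e.g.\ to promote nontrivial elements stabilizing a horoball to infinite-order ones.
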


\begin{remark}
Theorem \ref{t:main} states that the existence of a non-residually
finite hyperbolic group is equivalent to the existence of a
non-separable quasi-convex subgroup of some hyperbolic group.  This
equivalence was guessed by Dani Wise in \cite{wise:polygons}.
Wise (\emph{op. cit.}) and Minasyan 
\cite{minasyan:subsetgferf} noticed independently that
an argument of Long and Niblo
\cite{longniblo:subgroupsep} can be used to show that residual
finiteness for all hyperbolic groups implies separability of all
\emph{almost malnormal} quasi-convex subgroups.

In the other direction,  Kapovich and Wise show in \cite{kapwise} that
if every hyperbolic group has a finite index subgroup, then every
hyperbolic group is residually finite.  Together with our result, this
gives the statement:  If every hyperbolic group has a finite index
subgroup, then every hyperbolic group is QCERF.
\end{remark}

To prove Theorem \ref{t:main}, for a hyperbolic group $G$ with 
quasi-convex subgroup $H<G$ and $g\in G-H$ an element to separate, 
we would like to find a hyperbolic quotient $\varphi: G\to K$, such
that $\varphi(H)< K$ is finite, and $\varphi(g)\notin \varphi(H)$. 
Then since $K$ is assumed to be hyperbolic and therefore residually
finite, we may separate $g$ from $H$. One natural way to attempt
to find such a quotient $\varphi$ would be to ``kill" a large finite-index
normal subgroup $H'\lhd H$, and hope that the quotient of $G$ is still hyperbolic
and that $H$ projects to $H/H'$. 
This actually works if $H$ is malnormal in $G$. 
The difficulty with this procedure if $H$ is not malnormal is that one must make sure that for
any $k\in G-H$, if $U=H\cap H^k\neq \emptyset$, then 
$H' \cap U = H' \cap U^{k^{-1}} \subset H$, otherwise killing
$H'$ would force a larger subset of $H$ to be killed. Thus, we
need to take into account intersections between $H$ and its conjugates,
which motivates considering the following definitions.

Let $H^g=gHg^{-1}$. 
The following was defined in \cite{gmrs}.

\begin{definition}
Let $H$ be a subgroup of a group $G$. The elements 
$\{g_i \mid 1 \leq i \leq n\}$ of $G$ are said to be \emph{essentially distinct} if 
$g_i H  \neq g_j H $ for $i \neq j$. Conjugates $\{H^{g_i}\mid 1\leq
i\leq n\}$ of  
$H$ by essentially distinct elements are called \emph{essentially distinct conjugates}. 
\end{definition}
It should be remarked that essentially distinct conjugates may coincide
if $H$ is not equal to its own normalizer.

\begin{definition}\label{d:height} The {\em height} of an infinite subgroup $H < G$ is $n$ if there 
exists a collection of $n$ essentially distinct conjugates of $H$ such that the intersection 
of all the elements of the collection is infinite and $n$ is maximal possible. The height of a finite subgroup is $0$. 
\end{definition}
For example, an infinite, malnormal subgroup has height $1$, whereas
an infinite normal subgroup has height equal to its index.
The most relevant result about height for our purposes is the
following theorem of Gitik, Mitra, Rips, and Sageev.
\begin{theorem}\label{t:finiteheight} \cite{gmrs}
A quasi-convex subgroup of a hyperbolic group has finite height.
\end{theorem}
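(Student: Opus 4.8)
Fix a finite generating set for $G$, let $X$ be the resulting Cayley graph --- proper, geodesic, and $\delta$-hyperbolic for some $\delta$ --- and let $\kappa$ be a quasi-convexity constant for $H$, so that every geodesic of $X$ joining two elements of $H$ lies in the neighborhood $N_\kappa(H)$. We may assume $H$ is infinite, since finite subgroups have height $0$ by definition. The plan is to prove the quantitative statement that the height of $H$ is bounded by the number of vertices in a ball of some radius $C = C(\delta,\kappa)$ in $X$, which is finite because $X$ is locally finite.

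The one substantive ingredient is a lemma relating quasi-convexity to the limit set $\Lambda(H) \subseteq \partial G$: \emph{there is a constant $C = C(\delta,\kappa)$ such that whenever $\xi \neq \eta$ lie in $\Lambda(H)$, every bi-infinite geodesic of $X$ with endpoints $\xi$ and $\eta$ lies in $N_C(H)$.} I would prove this by choosing $a_m \to \xi$ and $b_m \to \eta$ with $a_m, b_m \in H$, observing that $[a_m, b_m] \subseteq N_\kappa(H)$ by quasi-convexity, using properness of $X$ to extract a limiting bi-infinite geodesic with endpoints $\xi, \eta$ inside $N_\kappa(H)$, and then invoking the standard fact that any two bi-infinite geodesics sharing a pair of endpoints lie within Hausdorff distance a bounded multiple of $\delta$. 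I expect this lemma to be the crux. It is also the point where it matters that we argue with honest geodesics between ideal points of $\Lambda(H)$: if one instead tried to use a quasi-axis of an infinite-order element of the intersection, the fellow-travelling constants would depend on that element and hence on the conjugating elements, which the counting argument below cannot tolerate.

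Granting the lemma, suppose $H^{g_1}, \dots, H^{g_n}$ are essentially distinct conjugates of $H$ with $L = \bigcap_{i=1}^n H^{g_i}$ infinite; essential distinctness says precisely that the cosets $g_1 H, \dots, g_n H$ are pairwise distinct. Since torsion subgroups of a hyperbolic group are finite, $L$ contains an element of infinite order, so $\Lambda(L)$ contains two distinct points $\xi, \eta$. Because $L \leq H^{g_i}$ and $\Lambda(H^{g_i}) = g_i \Lambda(H)$, the points $g_i^{-1}\xi$ and $g_i^{-1}\eta$ lie in $\Lambda(H)$ for every $i$. Now fix a bi-infinite geodesic $\sigma$ with endpoints $\xi, \eta$ and a vertex $p$ on $\sigma$. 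Applying the lemma to the geodesic $g_i^{-1}\sigma$ (whose endpoints are $g_i^{-1}\xi, g_i^{-1}\eta$) gives $g_i^{-1}\sigma \subseteq N_C(H)$, hence $\sigma \subseteq N_C(g_i H)$ after translating by $g_i$; in particular there is a vertex $x_i \in g_i H$ with $d_X(p, x_i) \leq C$.

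To conclude: the elements $x_1, \dots, x_n$ lie in the pairwise distinct cosets $g_1 H, \dots, g_n H$, so they are pairwise distinct, and they all lie in the ball of radius $C$ about $p$ in $X$. That ball is finite, of cardinality $N = N(\delta,\kappa)$ independent of $p$, so $n \leq N$. Since the collection of conjugates was arbitrary, the height of $H$ is at most $N$, which is finite.
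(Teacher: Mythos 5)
Your proof is correct, and since the paper offers no argument of its own for this statement---it simply cites \cite{gmrs}---the relevant comparison is with that reference: your limit-set lemma plus the coset-counting in a ball of uniform radius is essentially the argument of Gitik, Mitra, Rips, and Sageev. No gaps.
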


  The proof of Theorem \ref{t:main} will be by induction on
height, using the following theorem, which is the main technical
result of this paper.

\begin{theorem} \label{t:technical}
Let $G$ be a torsion-free residually finite hyperbolic group, let
$H$ be a quasi-convex subgroup of $G$ of height $k$, 
and let $g \in G \smallsetminus H$.  
There is a quotient $\eta : G \to \bar{G}$ so that;
\begin{enumerate}
\item $\bar{G}$ is hyperbolic;
\item $\eta(H)$ is quasi-convex in $\bar{G}$;
\item $\eta(g) \not\in \eta(H)$; and
\item the height of $\eta(H)$ in $\bar{G}$ is at most $k-1$.
\end{enumerate}
\end{theorem}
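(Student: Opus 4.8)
The plan is to realize $\bar G$ as a relatively hyperbolic Dehn filling of a carefully chosen peripheral structure on $G$. The key idea is this: by Theorem~\ref{t:finiteheight}, $H$ has finite height $k$, and one should look at the (finitely many, up to conjugacy) maximal infinite intersections of essentially distinct conjugates of $H$. Among these, consider the "top" ones realizing height $k$. Such a maximal intersection $U$ is itself quasi-convex, and—this is the crucial algebraic point—its normalizer (or rather the subgroup generated by all conjugates of $H$ that share this particular height-$k$ intersection) is an \emph{almost malnormal} quasi-convex subgroup of $G$. One wants to equip $G$ with a peripheral structure $\mathcal P$ consisting of these almost-malnormal quasi-convex subgroups, so that $(G,\mathcal P)$ is relatively hyperbolic (using, e.g., Bowditch's criterion, or the Hruska--Wise / Bowditch results on almost malnormal quasi-convex subgroups yielding relative hyperbolicity), and so that $H$ interacts well with $\mathcal P$: each peripheral $P$ should contain (a conjugate of) the relevant intersection, and $H\cap P$ should be \emph{full} / quasi-convex in $P$.

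Next I would invoke relatively hyperbolic Dehn filling (Osin, Groves--Manning): choose for each peripheral subgroup $P_i$ a finite-index normal subgroup $N_i\lhd P_i$, avoiding the finitely many "bad" elements (in particular avoiding $g$ if $g$ lies in a conjugate of some $P_i$, and avoiding the finitely many elements witnessing that intersections of conjugates of $H$ are not contained in $H$), and form the filling quotient $\eta\co G\to \bar G = G/\langle\langle N_1,\dots,N_n\rangle\rangle$. For $N_i$ deep enough, $\bar G$ is hyperbolic relative to the images $\bar P_i = P_i/N_i$, which are \emph{finite}; hence $\bar G$ is in fact word-hyperbolic, giving (1). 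The filling theorems also control the behavior of quasi-convex subgroups: with $N_i$ chosen deep enough (so that $H\cap P_i^{g}$-type intersections behave correctly and no unexpected collapsing occurs), $\eta(H)$ is quasi-convex in $\bar G$, giving (2), and $\eta|_H$ is injective with $\eta(g)\notin\eta(H)$ because the filling kernel meets $H$ trivially and was chosen to avoid $g$ modulo $H$, giving (3).

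The payoff is (4): by killing a deep finite-index subgroup of each peripheral $P_i$, the height-$k$ intersections—each of which was arranged to be detected inside some peripheral $P_i$—become finite in $\bar G$, so no collection of $k$ essentially distinct conjugates of $\eta(H)$ can have infinite intersection; thus the height drops to at most $k-1$. Making this precise requires a careful bookkeeping of how conjugates of $\eta(H)$ in $\bar G$ correspond to conjugates of $H$ in $G$ (again a consequence of the filling theorems, for $N_i$ deep enough), together with the almost-malnormality of the $P_i$ to ensure the only essentially distinct conjugates of $H$ that can meet in an infinite group after filling are ones that already met in a group commensurable with a peripheral.

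\textbf{The main obstacle} I anticipate is the construction of the peripheral structure itself: one must produce almost malnormal quasi-convex subgroups $P_i<G$ that simultaneously (a) make $(G,\mathcal P)$ relatively hyperbolic, (b) contain, up to conjugacy, \emph{every} height-$k$ intersection of conjugates of $H$, and (c) meet $H$ in a quasi-convex, "full" way so that the filling behaves correctly with respect to $H$. Building such $P_i$—essentially taking something like the "commensurator" or the subgroup generated by all conjugates of $H$ sharing a fixed maximal intersection, and proving it is quasi-convex and almost malnormal—is delicate, and controlling the intersections $H\cap P_i^{x}$ for all double coset representatives $x$ (there are finitely many relevant ones, by quasi-convexity and height) is where most of the technical work will lie. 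The Dehn filling step, once the setup is right, should then be a relatively mechanical application of the known filling theorems, provided one keeps track of the finitely many elements the filling must avoid.
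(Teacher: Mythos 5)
Your overall strategy is the paper's: build a peripheral structure on $G$ out of the infinite intersections of essentially distinct conjugates of $H$ (the paper takes commensurators of \emph{all} such intersections, not only the top-level ones, to get the ``induced peripheral structure'' $\mathcal{P}$, with $H\cap P$ of finite index in each $P\in\mathcal{P}$), and then perform a long Dehn filling along finite-index normal subgroups $N_i\lhd P_i$, which exist by residual finiteness. However, your justification of conclusion (3) rests on a false premise: the filling kernel cannot meet $H$ trivially. Each height-$k$ intersection $U$ lies in $H$ and is commensurable with some peripheral $P_i$, so any finite-index $N_i\lhd P_i$ meets $U$, hence $H$, in an infinite subgroup; indeed the entire height-reduction mechanism depends on killing an infinite subgroup of $H$. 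The induced filling of $H$ is therefore highly nontrivial, and $\eta(g)\notin\eta(H)$ cannot be deduced by saying the kernel avoids $H$ and $gH$. In the paper this is Proposition \ref{p:nocollide}, proved by a genuine geometric argument: one takes a shortest $h\in H$ with $\eta(h)=\eta(g)$ and shows, via an analysis of how geodesics of the cusped space of $G$ project to $10\delta$-local geodesics in the cusped space of the quotient (Lemmas \ref{l:shortcutfailure} and \ref{l:technical2}), that $|g|$ would have to be large.

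More broadly, you have inverted where the difficulty lies. The construction of $\mathcal{P}$ and the verification of relative hyperbolicity and relative quasi-convexity of $H$ (Proposition \ref{p:induced}) is comparatively routine, whereas the three steps you describe as ``a relatively mechanical application of the known filling theorems'' are the paper's main technical content. The standard filling theorems give only injectivity of $P_i/N_i\to\bar G$, relative hyperbolicity of $\bar G$, and injectivity on a prescribed finite subset of $G$; they say nothing about images of quasi-convex subgroups or about heights. Preservation of (relative) quasi-convexity of $\eta(H)$ is Proposition \ref{p:fillqc}, again resting on the local-geodesic lemmas, and the height decrease is Theorem \ref{t:heightdecrease}, proved by a separate annulus-with-punctures ``area'' argument in the style of word-hyperbolic Dehn surgery, needed precisely to carry out the ``careful bookkeeping'' you defer: lifting an infinite intersection of $k$ essentially distinct conjugates of $\eta(H)$ back to one for $H$ in $G$ is not a consequence of the quoted filling theorems. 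As written, your proposal identifies the correct architecture but leaves all three substantive conclusions unproved, and supplies an incorrect reason for one of them.
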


\begin{proof}[Proof of Theorem \ref{t:main} from Theorem \ref{t:technical}]
Let $G$ be a hyperbolic group and $H$ a quasi-convex subgroup of
$G$ of height $k$.  We prove that $H$ is separable by induction on height.

The base case is when $H$ has height zero, which means $H$ is
finite.  Since $G$ is residually finite it is straightforward to separate
any $g \in G \smallsetminus H$ from the finite set $H$.

Assume that $k \ge 1$. 
We claim that it suffices to prove $H$ is separable in the special
case that $G$ is torsion-free.  Indeed, let $G_0 
\le G$ be a torsion-free subgroup of finite-index.  Such a $G_0$ 
exists because $G$ is residually-finite and $G$ has only finitely many
conjugacy classes of torsion elements (see, e.g. \cite{brady:finite}).  
Further, let $H_0 = G_0 \cap
H$.  An elementary argument shows that the height of $H_0$ is at
most $k$.  Equally, if $H_0$ is separable in $G_0$ then $H$ is
separable in $G$.  To see this, note that since $G_0$ is of finite-index
in $G$, the profinite topology on $G_0$ coincides with the subspace topology
induced by the profinite topology on $G$.  Thus, if $H_0$ is closed in the profinite topology
on $G_0$ then it is closed in the profinite topology on $G$.  The subgroup $H$
is a finite union of cosets of $H_0$, and is therefore closed in the profinite topology on $G$.

We have now reduced to the case that $G$ is torsion-free.  Let
$g \in G \smallsetminus H$.  By Theorem
\ref{t:technical} there is a hyperbolic quotient $\bar{G}$ of $G$ which
separates $g$ from $H$, and the image of $H$ in $\bar{G}$ is 
quasi-convex and has height at most $k-1$.  Theorem \ref{t:main} follows
by induction.
\end{proof}

The remainder of the paper is devoted to the proof of Theorem
\ref{t:technical}.

\textbf{Acknowledgments:} The first author thanks Kevin Whyte for a
useful conversation.  We would also like to thank Igor Belegradek for
corrections to an earlier version of the conclusion, and
Eduardo Martinez-Pedroza, who pointed out a serious error in an earlier version of
the proof of Lemma \ref{l:technical2}.

\section{The cusped space of a relatively hyperbolic group}
In this section we briefly recall the main constructions of
\cite{rhds}.  Briefly, given a finitely generated group $G=\langle
S\rangle$ and a finite collection of finitely generated subgroups 
$\mc{P}$, we 
build a ``cusped space'' $X(G,\mc{P},S)$ by first forming the Cayley graph
of $G$ and then gluing a ``horoball'' onto each translate of an
element of $\mc{P}$.

\begin{definition}\label{d:combhoro}
Let $\Gamma$ be any $1$-complex.
The \emph{combinatorial horoball based on $\Gamma$}, denoted
$\mc{H}(\Gamma)$, is the $2$-complex formed as follows:
\begin{itemize}
\item $\mc{H}^{(0)}= \Gamma^{(0)}\times \left( \{0\}\cup \N \right)$
\item $\mc{H}^{(1)}$ contains the following three types of edges.  The
  first two types are called \emph{horizontal}, and the last type is
  called \emph{vertical}.
\begin{enumerate}
\item If $e$ is an edge of $\Gamma$ joining $v$ to $w$ then there is a
  corresponding edge $\bar{e}$ connecting $(v,0)$ to $(w,0)$.
\item If $k>0$ and $0<d_{\Gamma}(v,w)\leq 2^k$, then there is a single edge
  connecting $(v,k)$ to $(w,k)$.
\item If $k\geq 0$ and $v\in \Gamma^{(0)}$, there is an edge  joining
  $(v,k)$ to $(v,k+1)$. 
\end{enumerate}
\item $\mc{H}^{(2)}$ contains $2$-cells (described explicitly in
  \cite[Definition 3.1]{rhds}) which ensure that $\mc{H}$ satisfies a linear
  isoperimetric inequality, with constant independent of $\Gamma$.
\end{itemize}
\end{definition}

\begin{remark}\label{r:subset}
As the full subgraph of $\mc{H}(\Gamma)$ containing the vertices
$\Gamma^{(0)}\times\{0\}$ is isomorphic to $\Gamma$, we may think of
$\Gamma$ as a subset of $\mc{H}(\Gamma)$.
\end{remark} 

\begin{definition} \label{d:Depth1}
Let $\Gamma$ be a graph and $\mc{H}(\Gamma)$ the associated
combinatorial horoball.  Define a \emph{depth} function
\[D: \mc{H}(\Gamma) \to [0,\infty)\]
which satisfies:
\begin{enumerate}
\item $D(x)=0$ if $x\in \Gamma$,
\item $D(x)=k$ if $x$ is a vertex $(v,k)$, and
\item $D$ restricts to an affine function on each $1$-cell and on each
  $2$-cell.
\end{enumerate}
\end{definition}

\begin{definition} \label{d:relpres}
[Osin]
Suppose that $G$ is generated by $S$ with respect to
$\{ H_\lambda \}_{\lambda \in \Lambda}$.  This means $G$ is a quotient
of 
\[ F = 	F(S) \ast \left( \ast_{\lambda \in \Lambda} H_\lambda \right),	\]
where $F(S)$ is the free group on the alphabet $S$.
Suppose that $N$ is the kernel of the canonical quotient map
from $F$ to $G$.  If $N$ is the normal closure of the set $\mc{R}$ then
we say that
\[	\langle S, \{ H_\lambda \}_{\lambda \in \Lambda} \mid
\mc{R} \rangle	,	\]
is a {\em relative presentation} for $G$ with respect to 
$\{ H_\lambda \}_{\lambda \in \Lambda}$.

We say that $G$ is {\em finitely presented relative to
$\{ H_\lambda \}_{\lambda \in \Lambda}$} if we can choose
$\mc{R}$ and $S$ to be finite.
\end{definition}

\begin{definition}\label{d:X1} \emph{The cusped space $ X(G,\mc{P},S)$.}
Let $G$ be a finitely generated group which is finitely presented
relative to
$\mc{P}=\{P_1,\ldots,P_m \}$, a 
family of finitely generated subgroups of $G$. Let $S$ be a
generating set for $G$ so that $P_i\cap S$ generates $P_i$ for each
$i\in \{1,\ldots,m\}$.  
For each $i\in \{1,\ldots,m\}$, let $T_i$ be a left transversal for
$P_i$ (i.e. a collection of representatives for left cosets of $P_i$ in $G$
which contains exactly one element of each left coset). Let
$\Gamma = \Gamma(G,S)$ be the Cayley graph of $G$.  
To $\Gamma$, equivariantly attach $2$-cells coming
from the finite relative presentation to obtain a $2$-complex
$\overline {\Gamma}$.

For each $i$, and each $t\in T_i$, let $\Gamma_{i,t}$ be the full
subgraph of the Cayley graph $\Gamma(G,S)$ which contains $tP_i$.
Each $\Gamma_{i,t}$ is isomorphic to the Cayley graph of $P_i$ with
respect to the generators $P_i\cap S$.
Then we define
\[ X(G,\mc{P},S) = \overline{\Gamma} \cup (\cup \{\mc{H}(\Gamma_{i,t})\mid 1\leq i\leq m,
t\in T_i\}),\]
where the graphs $\Gamma_{i,t}\subset \Gamma(G,S)$ and
$\Gamma_{i,t}\subset \mc{H}(\Gamma_{i,t})$ are identified as suggested
in Remark \ref{r:subset}.  
\end{definition}

\begin{definition}
A \emph{horoball} of $X(G,\mc{P},S)$ is the subgraph
$\mc{H}(\Gamma_{i,t})$ for some $i$ and $t$.  For $l\in \N$, an
$l$-horoball is the subgraph of $\mc{H}(\Gamma_{i,t})$ whose vertices
are all of distance at least $l$ from the Cayley graph $\Gamma$.
\end{definition}

\begin{remark}\label{r:orderedpair}
Once a horoball is specified, the vertex of $X(G,\mc{P},S)$ 
connected by a vertical geodesic of length $n$ to the group element
$g$ can be conveniently referred to by the ordered pair $(g,n)$, and
we will often do so.
\end{remark}

\begin{remark}
Whenever $X(G,\mc{P},S)$ is to be thought of as a metric space, we
will always implicitly ignore the $2$-cells, and regard
$\mc{H}(\Gamma)^{(1)}$ as a metric graph with all edges of length
one.
\end{remark}

Relative hyperbolicity was
first defined by Gromov in \cite{gromov:wordhyperbolic}.  We use the
following characterization (See \cite[Section 3]{rhds} for this
characterization and others):
\begin{proposition}
Let $G$ be a finitely generated group, and let $\mc{P}$ be a finite
collection of finitely generated subgroups.  The following are
equivalent:
\begin{enumerate}
\item $G$ is hyperbolic relative to $\mc{P}$ in the sense of Gromov.
\item The space $X(G,\mc{P},S)$ is Gromov hyperbolic for some finite
  generating set $S$. 
\item The space $X(G,\mc{P},S)$ satisfies a linear isoperimetric inequality.
\end{enumerate}
\end{proposition}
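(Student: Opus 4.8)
The plan is not to prove this from scratch but to recall and assemble the results of \cite[Section 3]{rhds}, indicating how the three clauses correspond. A preliminary remark: each of (1), (2), (3) forces $G$ to be finitely presented relative to $\mc{P}$ --- for (1) this is a theorem of Osin about relatively hyperbolic groups, and for (2) and (3) it is part of the standing hypotheses under which $X(G,\mc{P},S)$ is defined (Definition \ref{d:X1}) --- so throughout we may assume $X = X(G,\mc{P},S)$ is defined and is a simply connected $2$-complex. Simple connectivity holds because the $2$-cells of $\overline{\Gamma}$ together with those of the horoballs $\mc{H}(\Gamma_{i,t})$ realize a genuine presentation of $G$ (the relative presentation becomes an ordinary presentation once the peripheral relations, which are absorbed by the combinatorial horoballs, are adjoined), and each horoball is itself simply connected and is attached along the connected subgraph $\Gamma_{i,t}$.

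Given this, (2) $\Leftrightarrow$ (3) is the standard combinatorial characterization of hyperbolicity: a simply connected $2$-complex whose $2$-cells have uniformly bounded boundary length has Gromov hyperbolic $1$-skeleton if and only if it satisfies a linear isoperimetric inequality. The boundedness hypothesis is immediate for $X$, whose $2$-cells come in finitely many combinatorial types (relators of the finite relative presentation, cells of $\overline{\Gamma}$, and the finitely many cell types of a combinatorial horoball, Definition \ref{d:combhoro}). That hyperbolicity of $X$ is independent of the finite generating set $S$ then follows from quasi-isometry invariance of hyperbolicity, since changing $S$ changes $X$ only by a quasi-isometry.

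For (1) $\Leftrightarrow$ (2), I would invoke the main geometric theorem of \cite{rhds}: $X(G,\mc{P},S)$ is Gromov hyperbolic if and only if the coned-off Cayley graph of $(G,\mc{P},S)$ is Gromov hyperbolic and fine in the sense of Bowditch with the conjugates of the $P_i$ as cone-vertex stabilizers; and the latter condition is one of the standard equivalent definitions of ``$G$ hyperbolic relative to $\mc{P}$'', equivalent in particular to Gromov's original formulation via a geometrically finite convergence action and to Osin's formulation via a linear relative isoperimetric function. The technical heart --- the step I expect to be hardest, although it is carried out in \cite{rhds} and not here --- is the dictionary between horoballs of $X$ and cone points of the coned-off graph: one shows that a geodesic of $X$ which passes through a horoball travels vertically upward, then a uniformly bounded horizontal distance, then vertically back down, and uses this control to transport thin triangles and the fineness condition between the two spaces. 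Since the present paper needs the Proposition only in order to work with the cusped space $X$ in the sequel, collecting these statements from \cite{rhds} completes the argument.
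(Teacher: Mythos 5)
Your proposal is correct and matches the paper, which states this proposition without proof, citing \cite[Section 3]{rhds} for exactly the results you assemble (in particular Theorems 3.24--3.25 there, which the present paper also invokes later in Proposition \ref{p:uniformhyp}). Your outline of the ingredients --- the standard equivalence of hyperbolicity with a linear isoperimetric inequality for simply connected complexes with bounded cell size, and the dictionary between horoballs of the cusped space and the coned-off Cayley graph --- is an accurate description of what that citation contains.
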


Most of our geometric arguments therefore take place in some cusped
space $X=X(G,\mc{P},S)$.  For most of the paper, we will work either
with arbitrary geodesics in this space, or with \emph{regular}
geodesics, i.e. geodesics whose intersection with any horoball is
vertical except possibly for a single horizontal sub-segment.  In
Subsection \ref{ss:heightdecrease}, we will need to use paths between
points in $X$ (and sometimes $\partial X$) whose behavior is even more
controlled.  These are the \emph{preferred paths} of \cite{rhds}, and
we refer to that paper for a detailed discussion.

\section{Filling hyperbolic and relatively hyperbolic groups}

Let $G$ be hyperbolic relative to a finite collection $\mc{P} = \{ P_1, \ldots , P_m \}$,
as in the previous section.
A \emph{filling} of $G$ is determined by a choice of subgroups
$N_j\lhd P_j$, called \emph{filling kernels}; we write the quotient
after filling as $G(N_1,\ldots,N_m)$.  If $S$ is a generating set for
$G$ which contains generating sets for each $P_i$, then for each $i$ we define the
\emph{algebraic slope length}, denoted $|N_i|_{P_i}$, to be the length of the
shortest nontrivial element of $N_i$, measured in the generators
$S\cap P_i$.

We collect here some results about filling from \cite{rhds} (see
also \cite{osin:peripheral}):
\begin{theorem}\label{t:rhds}
Let $G$ be a torsion-free group, which is hyperbolic
relative to a collection $\mc{P} = \{ P_1, \ldots , P_m \}$
of finitely generated subgroups.  Suppose that $S$
is a generating set for $G$ so that for each $1 \le i \le m$ 
we have $P_i = \langle P_i \cap S \rangle$.
Let $F\subseteq G$ be a finite set.

There exists a constant $B$ depending only on 
$G$, $\mc{P}$, $S$, and $F$ so that for any collection $\{ N_i \}_{i=1}^m$ 
of subgroups satisfying
\begin{itemize}
\item $N_i \unlhd P_i$ and
\item $| N_i |_{P_i} \ge B$,
\end{itemize}
then the following hold, where $K$ is the normal closure in $G$ of
$N_1\cup \cdots \cup N_m$, and $G(N_1,\ldots,N_m) = G/K$:
\begin{enumerate}
\item\label{b:injective} \cite[Theorem 9.1]{rhds} The map $P_i/N_i\xrightarrow{\iota_i}G(N_1,\ldots,N_m)$ given by $pN_i\mapsto pK$
  is injective for each $i$.
\item\label{b:relhyp} \cite[Theorem 7.2]{rhds} $G(N_1,\ldots,N_m)$ is hyperbolic relative to the collection
  $\mc{Q}=\{\iota_i(P_i/N_i)\mid 1\leq i\leq m\}$.
\item\label{b:inject} \cite[Corollary 9.7]{rhds} The projection from
  $G$ to $G(N_1,\ldots,N_m)$ is injective on $F$.
\end{enumerate}
\end{theorem}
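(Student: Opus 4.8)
These three statements are \cite[Theorem 7.2, Theorem 9.1, and Corollary 9.7]{rhds}; the plan here is to explain the single geometric mechanism underlying all of them. Fix $S$ as in the hypotheses and work in the cusped space $X=X(G,\mc P,S)$, which we may assume is $\delta$-hyperbolic. Write $\bar G=G(N_1,\dots,N_m)=G/K$, let $\eta\co G\to\bar G$ be the quotient, and let $\bar S$, $\bar P_i$ be the images of $S$ and $P_i$. Since $N_i$ lies in the kernel of $P_i\to\bar P_i$, a finite relative presentation of $G$ with respect to $\mc P$ descends to a finite relative presentation of $\bar G$ with respect to $\{\bar P_i\}$ using the \emph{same} relators, so one may form the candidate cusped space $\bar X=X(\bar G,\{\bar P_i\},\bar S)$, whose horoballs are combinatorial horoballs on the Cayley graphs of the $\bar P_i$. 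The quotient $\eta$ extends to a simplicial, depth-preserving map $\hat\eta\co X\to\bar X$ taking $\Gamma(G,S)$ onto $\Gamma(\bar G,\bar S)$ and each horoball of $X$ onto the horoball over its image coset. The whole argument then proceeds by comparing $X$ and $\bar X$ through $\hat\eta$.

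The key point, and the main obstacle, is the estimate that \emph{long fillings do not disturb the shallow part of the cusped space}. In a combinatorial horoball $\mc H(\Gamma)$ the depth-$k$ vertices $(v,k)$ and $(w,k)$ span a horizontal edge precisely when $d_\Gamma(v,w)\le 2^k$, so at depth $k$ the horoball only sees $\Gamma$ at scale $2^k$. Because every nontrivial element of each $N_i$ has $(S\cap P_i)$-length at least $B$, the Cayley-graph quotient maps $\Gamma(G,S)\to\Gamma(\bar G,\bar S)$ and $\Gamma(P_i)\to\Gamma(\bar P_i)$ are isometries on balls of radius $\asymp B$; hence $\hat\eta$ is a covering map — in particular a local isometry — on the subcomplex of $X$ of depth at most $\log_2 B-O(1)$, and all collapsing caused by the filling occurs strictly below that level, deep inside horoballs, where by the built-in linear isoperimetric inequality of combinatorial horoballs every loop already bounds a disk of linear area. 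Making this dichotomy quantitative at the level of van Kampen diagrams — a surgery that, given a diagram over $\bar G$, lifts its shallow part through the covering region of $\hat\eta$, refills the deep-horoball part inside the horoballs of $X$, and extracts a diagram over $G$ of controlled area — is where essentially all the work of \cite{rhds} goes. Applying the surgery to an arbitrary loop in $\bar X^{(1)}$ produces a linear isoperimetric inequality for $\bar X$, which by the characterization recalled above yields that $\bar G$ is hyperbolic relative to $\{\bar P_i\}$; this is item (2), and it simultaneously pins down the peripheral structure $\mc Q$ once item (1) is known.

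Items (1) and (3) are then read off from the same surgery by a diameter count; for these $B$ need only exceed a constant depending on $\delta$, except that (3) also forces $B$ to depend on $F$. For (3): if $f\in K\smallsetminus\{1\}$ then $\eta(f)=1$, so the length-$|f|_S$ loop labelled by $f$ in $\Gamma(\bar G,\bar S)$ bounds a diagram over $\bar G$, necessarily of linear area and hence of diameter $O(|f|_S)$; choosing $B$ larger than an exponential in $\max_{f\in F}|f|_S$ makes this diagram too shallow to leave the covering region of $\hat\eta$, so it lifts to a disk diagram over $G$ with boundary word $f$, forcing $f=1$ in $G$ — a contradiction. Thus no nontrivial element of $K$ is as short as $f_1f_2^{-1}$ for $f_1,f_2\in F$, which is exactly injectivity of $\eta$ on $F$. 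Item (1) is the analogue localized to a single horoball: were $p\in(P_i\cap K)\smallsetminus N_i$, a bounded diagram of the same sort, tracked against $\mc H(\Gamma_{i,1})$, would lift through the covering region of $\hat\eta$ and exhibit $p$ as a product of the short $N_i$-pieces it actually meets, forcing $p\in N_i$. Torsion-freeness of $G$ enters only to keep $\bar G$ and $\bar P_i=P_i/N_i$ well behaved; the entire weight of the theorem rests on the ``$B$ large'' estimate of the second paragraph — that long slopes create no short relations — with everything else layered on top of it as bookkeeping.
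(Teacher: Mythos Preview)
The paper does not prove this theorem; it quotes the three conclusions from \cite{rhds}. That said, the technique of \cite{rhds} is visible in the proof of Theorem~\ref{t:heightdecrease} later in the paper, and your sketch departs from it in a way that leaves a genuine gap. The problem is your assertion that the Cayley-graph quotient $\Gamma(G,S)\to\Gamma(\bar G,\bar S)$ is an isometry on balls of radius $\asymp B$. For $\Gamma(P_i)\to\Gamma(\bar P_i)$ that is immediate from $|N_i|_{P_i}\ge B$; for $\Gamma(G,S)\to\Gamma(\bar G,\bar S)$ it would require every nontrivial element of the \emph{normal closure} $K$ to have $S$-length $\gtrsim B$, which is exactly conclusion~\eqref{b:inject} with $F$ a metric ball. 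Your argument for \eqref{b:inject} then invokes this very property: the loop labelled $f$ and its filling disk have their boundary at depth $0$ in the Cayley graph, not inside a horoball, so ``lifting through the covering region of $\hat\eta$'' presupposes what you are trying to prove. The only covering region that follows from the hypothesis alone is inside each horoball, at depths between $1$ and roughly $\log_2 B$.

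The arguments in \cite{rhds} for \eqref{b:injective} and \eqref{b:inject} sidestep this circularity by never lifting from $\bar X$ to $X$. One works upstairs in $X$ throughout: write the offending element as a minimal-length product of conjugates of elements of the $N_i$, interpret this as a punctured disk mapping to $X/G$, straighten the edges to preferred paths and pass to the skeletal filling, and then derive a contradiction from competing length estimates at depth $L_2$ --- each puncture must contribute many ribs because the $N_i$ are long, while the total number of ribs is bounded by $6$ times the number of triangles. This is exactly the template executed in this paper for Theorem~\ref{t:heightdecrease} (see Claim~\ref{claim:zeroarea} and the surrounding argument). Item~\eqref{b:relhyp} is handled in \cite{rhds} by establishing a linear homological isoperimetric inequality for $\bar X$ directly, again via the preferred-path machinery rather than by lifting diagrams.
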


The following lemma is needed in the proof of Proposition
\ref{p:uniformhyp}.  Its statement involves both `relative Dehn
functions' and the `coned-off Cayley complex' of a relatively hyperbolic
group.  We refer the reader to \cite[Section 2.3]{osin:relhypbook}
and \cite[Definition 2.47]{rhds} for the definitions.

\begin{lemma} \label{l:regulator}
Suppose that $G$ is hyperbolic relative to $\mc{P}$ and 
that 
\[	\langle X, \{ P_\lambda \}_{\lambda \in \Lambda} \mid
\mc{R} \rangle, 	\]
is a finite relative presentation for $G$ with respect to $\mc{P}$.
Let $M = \max_{r \in \mc{R}} |r|$.

Suppose further that
that $G$ has a linear relative Dehn function with constant $K$.
Then the coned-off Cayley complex of $G$ with respect to $\mc{P}$
has a linear isoperimetric function with constant at most $(M+1)K +1$.
\end{lemma}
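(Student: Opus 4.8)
The plan is to argue via van Kampen diagrams: use the linear relative Dehn function to bound the number of cells labelled by relators in $\mc{R}$, and a separate, purely combinatorial argument to bound the cells labelled by peripheral relators.

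Recall (see \cite[Definition 2.47]{rhds}) that the coned-off Cayley complex of $G$ with respect to $\mc{P}$ has $1$-cells labelled by $X$ and by the elements of $\mc{H}_\lambda := P_\lambda \smallsetminus \{1\}$ for $\lambda \in \Lambda$, and $2$-cells labelled by $\mc{R}$ together with, for each $\lambda$, the set $\mc{S}_\lambda$ of all words over $\mc{H}_\lambda$ that represent $1$ in $P_\lambda$; in particular it is the presentation complex of $\langle X \sqcup \bigsqcup_\lambda \mc{H}_\lambda \mid \mc{R} \cup \bigcup_\lambda \mc{S}_\lambda \rangle$. I would take a combinatorial loop $\gamma$ of length $n$ in its $1$-skeleton and read off the corresponding word $w$ of length $n$ over $X \sqcup \bigsqcup_\lambda \mc{H}_\lambda$; it represents $1$ in $G$ and has relative length at most $n$, so $\mathrm{Area}^{\mathrm{rel}}(w) \le Kn$ by the hypothesis on the relative Dehn function. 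By the relative van Kampen lemma there is then a diagram $\Delta$ over $\langle X \sqcup \bigsqcup_\lambda \mc{H}_\lambda \mid \mc{R} \cup \bigcup_\lambda \mc{S}_\lambda \rangle$ with boundary word $w$ in which at most $Kn$ of the $2$-cells carry labels from $\mc{R}$; the remaining $2$-cells carry labels from the $\mc{S}_\lambda$.

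The heart of the proof is to replace the peripheral cells of $\Delta$ by an efficient filling and then count. Since the alphabets $\mc{H}_\lambda$ are pairwise disjoint and each $\mc{S}_\lambda$ consists of \emph{all} peripheral words representing $1$, whenever two $\mc{S}_\lambda$-cells of $\Delta$ share an edge they map into a common coset of a single $P_\lambda$, and the union of that pair is again an $\mc{S}_\lambda$-cell; merging repeatedly I may assume no two $\mc{S}_\lambda$-cells of $\Delta$ share an edge. Consequently every edge on the boundary of an $\mc{S}_\lambda$-cell either lies on $\partial\Delta$ or is shared with an $\mc{R}$-cell, so the sum of the perimeters of all $\mc{S}_\lambda$-cells is at most the length of $\partial\Delta$ plus the total perimeter of the $\mc{R}$-cells, hence at most $n + M\cdot Kn$ (there are at most $Kn$ $\mc{R}$-cells, each of perimeter at most $M$). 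Now re-fill each $\mc{S}_\lambda$-cell of perimeter $b$ by a fan triangulation into at most $b$ triangular $\mc{S}_\lambda$-cells --- legitimate since $\mc{S}_\lambda$ contains every length-$3$ relator of $P_\lambda$. The result is a diagram over the coned-off Cayley complex with boundary $w$ and at most $Kn + (n + MKn) = ((M+1)K + 1)\,n$ cells, which gives the claimed linear isoperimetric function.

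The main obstacle is exactly this control of the peripheral cells: the relative Dehn function says nothing about their number, so a naive realization of $\Delta$ over the coned-off complex could be arbitrarily large. The merge-then-retriangulate device is what handles this, by forcing the perimeters of the (now pairwise edge-disjoint) peripheral cells to be charged against $\partial\Delta$ and against the boundaries of the boundedly many $\mc{R}$-cells. The two points to check carefully are that the merging terminates with genuinely pairwise edge-disjoint peripheral cells --- which uses the disjointness of the peripheral alphabets together with the closure of each $\mc{S}_\lambda$ under the relevant union --- and that a peripheral word of length $b$ can indeed be filled by at most $b$ triangular cells over the full peripheral presentation, which is a routine fan construction.
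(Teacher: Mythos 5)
Your argument is in substance the same as the paper's: bound the number of $\mc{R}$-cells by $K|c|$ via the linear relative Dehn function, observe that every boundary edge of the (suitably amalgamated) peripheral regions is charged either to $\partial\Delta$ or to the boundary of one of the at most $K|c|$ $\mc{R}$-cells, so their total perimeter is at most $(MK+1)|c|$, and then fill each peripheral region by triangles at cost equal to its perimeter. The one substantive discrepancy is your identification of the coned-off Cayley complex with the presentation complex of $\langle X \sqcup \bigsqcup_\lambda \mc{H}_\lambda \mid \mc{R}\cup\bigcup_\lambda \mc{S}_\lambda\rangle$: in the sense of \cite[Definition 2.47]{rhds} the coned-off Cayley complex has a cone \emph{vertex} over each peripheral coset, and its peripheral $2$-cells are cone triangles (two cone edges and one edge of the relative Cayley graph), not arbitrary elements of $\mc{S}_\lambda$ and certainly not your fan triangles, whose three sides all carry $\mc{H}_\lambda$-labels. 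This costs you the two translations the paper performs explicitly: an input loop in $\hat{C}$ may traverse cone edges and must first be pushed into the relative Cayley graph (which does not increase length), and at the end each peripheral $b$-gon must be coned to the cone vertex, yielding $b$ cone triangles --- the same count as your fan, so the constant $(M+1)K+1$ survives unchanged. Two smaller points you rightly flag but do not resolve: the union of two $\mc{S}_\lambda$-cells sharing an edge need not be simply connected, and the fix is to replace that union together with any enclosed subdiagram by a single cell (this only removes $\mc{R}$-cells and leaves $\partial\Delta$ alone, so the counts are undisturbed); and a fan diagonal may carry the trivial label, in which case one should split the polygon there rather than triangulate across it.
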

\begin{proof}
Let $\hat{C}$ be the coned-off Cayley complex.
Start with a loop $c$ in the $1$-skeleton of $\hat{C}$.  We may clearly
assume that $c$ is embedded.  

Let $\Gamma$ be the Cayley graph of $G$ with respect to the generating set $X \cup (\cup_{\lambda} P_\lambda \smallsetminus \{ 1 \})$.  Any loop in $\Gamma$ can be filled with a disk whose $2$-cells
have boundary labelled either by elements of $\mc{R}$ (`$\mc{R}$-cells') or by a relation in one of the $P_\lambda$ (`$\mc{P}$-cells').

We replace the loop $c$ by a loop $c'$ in $\Gamma$ by taking each
sub-segment of $c$ of length $2$ which has a cone point as its
midpoint and replacing it with the corresponding edge of $\Gamma$.

Clearly $|c'| \le |c|$.  There is therefore some filling of $c'$ in $\Gamma$ with at most $K|c|$ $\mc{R}$-cells (and we do not need any information
about the number of $\mc{P}$-cells).

These $\mc{R}$-cells lift to a partial filling $\omega$ of $c$ in $\hat{C}$.  
There is 
a collection $\nu_1, \ldots , \nu_k$ of embedded loops, each of which is 
in the closed star of some cone vertex $x_i \in \hat{C}$ so that as (oriented)
$1$-cycles, the boundary of $\omega$ is $c - \sum_i {\nu_i}$.

The edges in the $\nu_i$ are of three types:
\begin{enumerate}
\item edges on the boundary of some $\mc{R}$-cell
\item edges in the Cayley graph whose interior do not intersect any $\mc{R}$-cell; and
\item the cone edges removed when constructing $c'$ from $c$.
\end{enumerate}
There are at most $MK|c|$ edges of the first type, and at most
$|c|$ total edges of the second and third types.  Since the cone on
any graph has isoperimetric constant $1$, each loop $\nu_i$
can be filled with a disk $\Delta_i$ of area at most $|\nu_i|$.  The required filling of $c$ is given by the $2$-chain $\omega+\sum_i{\Delta_i}$.  It is straightforward to see that this can be realized by
a disk.  Since the area of $\omega$ is at most $K|c|$, and 
the sum of the areas of the $\Delta_i$ is equal to 
\[	\sum_i{|\nu_i|} \le (MK + 1)|c|,	\]
we get the required isoperimetric constant for $\hat{C}$.
\end{proof}

\begin{proposition}\label{p:uniformhyp}
Suppose that $G$, $\mc{P}$ and $S$ are as in the hypothesis of Theorem
\ref{t:rhds}, and let $F=\emptyset$.  
There is some $\delta$ and $B$ so that for any hyperbolic filling $G'=G(N_1,\ldots,N_m)$
and $\mc{Q}$ as in Theorem \ref{t:rhds} with $|N_i|_{P_i} \ge B$ for 
all $i$ the space $X(G',\mc{Q},S)$ is
$\delta$-hyperbolic .
\end{proposition}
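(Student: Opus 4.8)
The plan is to establish a uniform linear isoperimetric inequality for the $2$-complexes $X(G',\mc{Q},S)$, with a constant that does not depend on the choice of filling kernels $\{N_i\}$, and then invoke the equivalence (item (3) $\Leftrightarrow$ item (2) of the Proposition preceding this one) together with the quantitative form of that equivalence: a linear isoperimetric constant for $X(G',\mc{Q},S)$ translates into a hyperbolicity constant $\delta$ depending only on that isoperimetric constant (and the valence, which is controlled by $|S|$). So it suffices to bound, uniformly in the filling, the area needed to fill a combinatorial loop in $X(G',\mc{Q},S)$.

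The key mechanism is that filling does not increase the relative Dehn function by more than a bounded amount. More precisely, first I would fix a finite relative presentation $\langle S, \{P_i\} \mid \mc{R}\rangle$ for $G$ with respect to $\mc{P}$, and let $M = \max_{r\in\mc{R}}|r|$; by hyperbolicity of $G$ relative to $\mc{P}$ there is a linear relative Dehn function with some constant $K_0$. Now $G' = G(N_1,\dots,N_m)$ has the relative presentation $\langle S, \{P_i/N_i\}\mid \mc{R}\rangle$ \emph{with the same $\mc{R}$} — the peripheral structure absorbs the filling kernels. The point established in \cite{rhds} (and in \cite{osin:peripheral}) is that for $|N_i|_{P_i}$ large enough (at least some $B$ depending only on $G,\mc{P},S$), the relative Dehn function of $G'$ with respect to $\mc{Q}$ is linear with a constant $K$ that can be taken \emph{independent} of the filling — this is essentially the content of the relatively hyperbolic Dehn filling theorem (Theorem \ref{t:rhds}(2)) made quantitative. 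Then Lemma \ref{l:regulator} applied to $G'$ gives that the coned-off Cayley complex $\hat C'$ of $G'$ has a linear isoperimetric constant at most $(M+1)K+1$, again independent of the filling since $M$ is fixed and $K$ is uniform.

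Next I would pass from the coned-off Cayley complex $\hat C'$ to the cusped space $X(G',\mc{Q},S)$. A loop $c$ in $X(G',\mc{Q},S)$ can be pushed out of the horoballs: using vertical geodesics and the exponential-distortion structure of a combinatorial horoball (Definition \ref{d:combhoro}), one homotopes $c$ to a loop $c_0$ lying in the Cayley graph $\Gamma(G',S)$, at the cost of an area that is linear in $|c|$ with a \emph{universal} constant — this is exactly the statement that combinatorial horoballs satisfy a linear isoperimetric inequality with constant independent of the underlying graph, which is built into Definition \ref{d:combhoro}. The loop $c_0$ is then filled in $\hat C'$ using at most $((M+1)K+1)|c_0|$ cone-cells; each cone-cell sits in the star of a cone point over some peripheral coset $t(P_i/N_i)$, and this cone-cell is replaced by a disk in the corresponding horoball $\mc{H}(\Gamma_{i,t})$ — again at linear cost with a universal constant, since that horoball's isoperimetric function is universal. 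Summing these contributions gives a linear isoperimetric inequality for $X(G',\mc{Q},S)$ with a constant depending only on $M$, $K$, $|S|$, and the universal horoball constant — hence independent of the filling. Applying the (quantitative) implication (3) $\Rightarrow$ (2) of the Proposition above yields a single $\delta$ working for all sufficiently deep fillings, as required.

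The main obstacle is the uniformity of the relative Dehn constant $K$ across all fillings: one must verify that the constant produced by the relatively hyperbolic Dehn filling theorem (Theorem \ref{t:rhds}, via \cite{rhds} or \cite{osin:peripheral}) really does not degrade as $|N_i|_{P_i}\to\infty$, i.e. that once the slopes are long enough, the same $K$ works uniformly. This is where the quantitative statements in \cite[Section 7]{rhds} (the proof that $G(N_1,\dots,N_m)$ is relatively hyperbolic proceeds by bounding areas of disks in $X(G',\mc{Q},S)$ in terms of areas of disks in $X(G,\mc{P},S)$, with a multiplicative loss that is bounded once $B$ is large) must be invoked carefully; the rest of the argument — pushing loops out of horoballs, the coned-off-to-cusped translation, and converting isoperimetric constants to hyperbolicity constants — is standard and quantitatively robust.
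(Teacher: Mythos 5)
Your proposal follows essentially the same route as the paper: uniform linear relative Dehn function for $G'$ after long fillings, then Lemma \ref{l:regulator} to get a uniform isoperimetric constant for the coned-off Cayley complex, then passage to the cusped space, then conversion of the isoperimetric constant to a hyperbolicity constant. The ``main obstacle'' you flag --- uniformity of the relative Dehn constant over all sufficiently long fillings --- is resolved in the paper exactly as you suggest, by citing \cite[Lemma 5.3]{osin:peripheral} (which gives constant $3C$ once the $N_i$ avoid a fixed finite set), and the coned-off-to-cusped step you sketch by hand is handled there by quoting \cite[Theorem 3.24]{rhds}.
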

\begin{proof}
By \cite[Theorem 3.25]{rhds}, $G$ is hyperbolic relative to 
$\mc{P}$.
By the Appendix of \cite{osin:relhypbook} this means that the
relative Dehn function of $G$ with respect to $\mc{P}$ is linear.
Let $C$ be the constant of this linear function.  By \cite[Lemma 5.3]{osin:peripheral}, there is a finite set $\mc{A} \in G$ so that if
each $N_i \cap \mc{A} = \emptyset$ then the relative Dehn function
for $G'$ with respect to $\mc{Q}$ is linear with constant at most $3C$.
Let $B$ be so large that the ball of radius $B$ about $1$ in $G$ contains $\mc{A}$.

Given a finite relative presentation for $G$, there is an obvious
finite relative presentation for $G'$, and the maximum length of 
a relator does not increase.  Let $M$ be the maximum length
of a relator in the given finite relative presentation for $G$ (which is
used to calculate the constant $C$ above).  
By Lemma \ref{l:regulator}, the coned-off Cayley complex for 
$G'$ has
a linear isoperimetric function with constant at most $3(M+1)C +1$. 
Let $C' = 3(M+1)C +1$. 
By \cite[Theorem 3.24]{rhds} this implies that the cusped space for
$G'$ has a linear isoperimetric function with constant 
$3C'(2C'+1)$.  Now, by \cite[Theorem III.H.2.9]{bridhaef:book}, the
constant of hyperbolicity for the cusped space for $G'$ can be calculated
explicitly in terms of this isoperimetric constant, and $\max\{M,5\}$, the maximum
length of an attaching map of a $2$-cell for the cusped space.
Putting all of these estimates together shows that this constant
of hyperbolicity is uniform over all sufficiently long fillings.
\end{proof}

\begin{remark}
There is a direct proof of the above result using the results of 
\cite{rhds} rather than \cite{osin:peripheral}.  However, the output of the main theorem of 
\cite{rhds} is a bound on the constant for a linear {\em homological}
isoperimetric inequality for the space $X(G',\mc{Q},S)$.  In order
to apply this, one needs to make the constant of hyperbolicity in
the conclusion of \cite[Theorem 2.29]{rhds} explicit in terms of 
the homological isoperimetric constant. This would involve rewriting
\cite[Theorem III.H.2.9]{bridhaef:book} in a homological setting.  Feeling
that this would be too much of a diversion, we chose
the shorter but more circuitous proof above.
\end{remark}

\section{Quasi-convexity}
Suppose that  $H$ and $G$ are both relatively hyperbolic
groups.  Let $\mc{P}=\{P_1,\ldots,P_n\}$ be the
peripheral subgroups of $G$, and let $\mc{D}=\{D_1,\ldots,D_m\}$ be
the peripheral 
subgroups of $H$.  Let $\phi\co H\to G$ be a homomorphism.
If every
$\phi(D_i)\in \mc{D}$ is conjugate in $G$ into some $P_j\in \mc{P}$, we say
that the map $\phi$ \emph{respects the peripheral structure on}
$H$.
Let $S$ and $T$ be finite relative generating sets for $G$ and $H$
respectively.

\begin{lemma} \label{l:check}
If $\phi\co H\to G$ is a homomorphism which
respects the peripheral structure on $H$, then
$\phi$ extends to an $H$-equivariant lipschitz map $\check{\phi}$
from (the zero-skeleton of) $X(H,\mc{D},T)$ to
$X(G,\mc{P},S)$.  If $\phi$ is injective, then $\check\phi$ is proper.
\end{lemma}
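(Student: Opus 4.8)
The plan is to define $\check\phi$ on vertices in two stages: first on the Cayley graph part of $X(H,\mc D,T)$, then on the horoballs. On the Cayley graph $\Gamma(H,T)\subset X(H,\mc D,T)$, I would simply set $\check\phi(h)=\phi(h)$ for each group element $h\in H$, viewed inside $\Gamma(G,S)\subset X(G,\mc P,S)$. This is $H$-equivariant by construction (where $H$ acts on $X(G,\mc P,S)$ through $\phi$), and it is Lipschitz on the zero-skeleton of $\Gamma(H,T)$ because each generator $t\in T$ maps to an element $\phi(t)\in G$ of some bounded word-length $L$ in $S$; so adjacent vertices of $\Gamma(H,T)$ go to vertices of $X(G,\mc P,S)$ at distance at most $L$.

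The substantive part is extending over the horoballs. Fix a horoball $\mc H(\Gamma_{i,t})$ of $X(H,\mc D,T)$, based on a coset $tD_i$. Since $\phi$ respects the peripheral structure, $\phi(D_i)$ is conjugate in $G$ into some $P_j$; pick $c\in G$ with $\phi(D_i)^{c}\subseteq P_j$, i.e. $\phi(t D_i) c^{-1}$ lies in a single left coset $g P_j$, which is the base of a horoball $\mc H(\Gamma_{j,s})$ of $X(G,\mc P,S)$. I would define $\check\phi$ on this horoball by sending the vertex $(hd, n)$ (for $hd\in tD_i$) to the vertex $(\phi(hd)c^{-1}, n)$ — that is, preserve the depth coordinate and use $\phi$ (composed with right-translation by $c^{-1}$) on the base coordinate. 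To see this is Lipschitz I must check it on each of the three edge types of Definition \ref{d:combhoro}: vertical edges $(v,k)$–$(v,k+1)$ map to vertical edges, so are fine; horizontal edges of type (1), at depth $0$, are handled by the Cayley-graph estimate above together with the fact that right-multiplication by $c^{-1}$ is an isometry of $X(G,\mc P,S)$; and horizontal edges of type (2), joining $(v,k)$ to $(w,k)$ with $0<d_{\Gamma_{i,t}}(v,w)\le 2^k$, require noting that $d_{\Gamma_{j,s}}(\check\phi v,\check\phi w)\le L\cdot d_{\Gamma_{i,t}}(v,w)\le L\cdot 2^k$, which can be traversed within depth $k$ using roughly $\log_2 L$ additional vertical steps plus one type-(2) horizontal edge — a bounded number of edges, independent of $k$. (One must also pick the conjugating elements $c$ coherently across the $H$-orbit of horoballs so that $\check\phi$ is genuinely $H$-equivariant; since $H$ permutes its horoballs and $\phi$ intertwines the actions, it suffices to choose one $c$ per $H$-orbit of peripheral cosets and propagate.) Combining the finitely many edge-type estimates over the finitely many $H$-orbits of peripherals yields a single Lipschitz constant.

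For properness under the assumption that $\phi$ is injective: I want to show preimages of bounded sets are bounded, equivalently that $d_{X(G,\mc P,S)}(\check\phi v,\check\phi w)\to\infty$ as $d_{X(H,\mc D,T)}(v,w)\to\infty$. The depth coordinate is preserved exactly on each horoball, so deep vertices map to deep vertices; the only real content is at bounded depth, i.e. essentially in the Cayley graph. There, properness of $\phi\co \Gamma(H,T)\to X(G,\mc P,S)$ reduces to: an infinite sequence $h_n\in H$ with $|h_n|_T\to\infty$ cannot have $\phi(h_n)$ remaining in a bounded region of $X(G,\mc P,S)$. If $\phi(h_n)$ stayed in a bounded ball of $\Gamma(G,S)$ this is immediate from injectivity of $\phi$ (finitely many group elements). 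The subtlety is that $\phi(h_n)$ could stay bounded in $X(G,\mc P,S)$ by going deep into a $G$-horoball — but a vertex $(g,k)\in\mc H(\Gamma_{j,s})$ is within distance $\le k+\mathrm{(const)}$ of the base, and the base elements reachable are those $g$ with $|g|_S$ bounded in terms of the distance, so deep excursions force $g$ to range over a $P_j$-coset; pulling back, $h_n$ would have to range over a $D_i$-coset with $\phi(h_n)$ bounded \emph{within} that horoball, and since $\phi|_{D_i}$ is injective and the horoball metric on $\mc H(\Gamma_{j,s})$ is itself proper as a metric graph, this again forces $|h_n|_T$ bounded. So the only genuine obstacle is bookkeeping: organizing these "bounded in the cusped space" cases into the two possibilities (bounded in the Cayley graph, or trapped in a single horoball) and invoking properness of each combinatorial horoball together with injectivity of $\phi$. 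I would phrase this cleanly by first establishing that $\check\phi$ restricted to (a neighborhood of) the base of any horoball of $X(H,\mc D,T)$ is proper into the corresponding horoball of $X(G,\mc P,S)$, then gluing.
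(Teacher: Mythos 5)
Your construction is essentially identical to the paper's: define $\check\phi=\phi$ on the Cayley graph, choose one conjugating element per peripheral subgroup $D_i$ to map each $H$-horoball into the appropriate $G$-horoball preserving depth, and bound the Lipschitz constant by the finitely many edge-type/peripheral-orbit cases. The paper in fact leaves properness to the reader, and your sketch of it (local finiteness plus injectivity, treating the Cayley-graph and horoball cases separately) is a correct way to fill that in.
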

\begin{proof}
We first associate with each $D_i\in \mc{D}$ an element $c_i \in
G$.
Since $\phi$ respects the peripheral structure, there is some $P_{j_i}\in
\mc{P}$ and some $c\in G$ so that $\phi(D_i) \subseteq c P_{j_i}
c^{-1}$.  We let $c_i$ be some shortest such $c$, with respect to the
generators $S$.

For $h\in H$, we define $\check\phi(h) = \phi(h)$.  A vertex in a
horoball 
of $X(H,\mc{D},T)$ is
determined by a triple $(sD_i,h,n)$, where $s\in H$, $D_i\in \mc{D}$,
$h\in sD_i$, and $n\in \N$.
We define
\[ \check\phi(sD_i,h,n) = (\phi(s)c_iP_{j_i}, \phi(h)c_i, n) .\]
Let $a = \max\{|\phi(t)|_S\mid t\in T\}$, and let $b =
\max\{|c_i|_S\}$; the map $\check\phi$ is $\alpha$-lipschitz for 
$\alpha = \max\{a, b+1\}$.

Properness is left to the reader.
\end{proof}

Recall that a  \emph{filling} of $G$ is determined by a choice of subgroups
$N_j\lhd P_j$, called \emph{filling kernels}; we write the quotient
after filling as $G(N_1,\ldots,N_m)$.

\begin{definition} \label{d:inducedfilling}
If $\phi$ is a homomorphism which respects the peripheral
structure on $H$, then any filling of $G$ induces a filling of $H$ as
follows.  For each $i$, there is some $c_i=c(D_i)$ in $G$ and some
$P_{j_i}$ in $\mc{P}$
so that
$c_i P_{j_i} c_i^{-1}$ contains $\phi(D_i)$.  
The induced filling kernels $K_i\lhd D_i$ are given by
\[ K_i = \phi^{-1}(c_i N_{j_i} c_i^{-1})\cap D_i.\]
The induced filling is $H(K_1,\ldots,K_n)$.
The map $\phi$ induces a homomorphism
\[\bar{\phi}\co H(K_1,\ldots,K_n)\to G(N_1,\ldots,N_m).\]
\end{definition}

\begin{definition}
Suppose $G$ is a relatively hyperbolic group, relative to $\mc{P}$,
and that $H<G$ is hyperbolic relative to $\mc{D}$ and that the
inclusion of $H$ into $G$ respects the peripheral structure.
A filling
$G\to G(N_1,\ldots,N_m)$ is an \emph{$H$-filling} if whenever 
$H\cap P_i^g$ is nontrivial, $N_i^g\subseteq sD_js^{-1}\subseteq H$
for some $s\in H$, and $D_j\in \mc{D}$.
\end{definition}

\subsection{Induced peripheral structures}

Let $G$ be a hyperbolic group, and let $H<G$ be a quasi-convex
subgroup. 
Recall that according to Theorem \ref{t:finiteheight}, $H$ has finite
\emph{height} (see  Definition \ref{d:height}).  We will construct a
peripheral structure for $G$ using the infinite intersections of
maximal collections of essentially distinct conjugates of $H$.

\begin{lemma} There are only finitely many $H$-conjugacy classes of
  subgroups $H\cap H^g$ such that $|H\cap H^g|=\infty$. 
\end{lemma}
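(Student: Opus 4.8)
The plan is to reduce the statement to a finiteness statement about double cosets, and then bound the relevant double cosets with standard hyperbolic geometry. First I would observe that the map sending a double coset to a conjugacy class is the right bookkeeping device: if $Hg_1H = Hg_2H$, say $g_2 = hg_1h'$ with $h,h'\in H$, then $H^{g_2} = hg_1h'Hh'^{-1}g_1^{-1}h^{-1} = hH^{g_1}h^{-1}$, so $H\cap H^{g_2} = H\cap hH^{g_1}h^{-1}$; conjugating by $h^{-1}\in H$ gives $h^{-1}(H\cap H^{g_2})h = H\cap H^{g_1}$. Hence the number of $H$-conjugacy classes of subgroups $H\cap H^g$ with $|H\cap H^g|=\infty$ is at most the number of double cosets $HgH$ for which $H\cap H^g$ is infinite, and it suffices to prove the latter is finite.

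Since $G$ is finitely generated, it is enough to produce a constant $R$, depending only on $G$, a fixed finite generating set, and the quasi-convexity constant $\lambda$ of $H$, such that every double coset $HgH$ with $H\cap H^g$ infinite meets the ball $B(1,R)$ in the Cayley graph. To do this, fix a hyperbolicity constant $\delta$ for $G$. If $H\cap H^g$ is infinite, its orbit accumulates on $\partial G$, so we may choose a point $\xi$ in the limit set of $H\cap H^g$; then $\xi\in\Lambda(H)\cap\Lambda(H^g)=\Lambda(H)\cap g\,\Lambda(H)$, so both $\xi$ and $g^{-1}\xi$ lie in $\Lambda(H)$. Approximating $\xi$ and $g^{-1}\xi$ by elements of $H$ and using quasi-convexity, there is a geodesic ray $r_1$ from $1$ to $\xi$ inside the $\lambda$-neighborhood $N_\lambda(H)$, and a geodesic ray $r_2$ from $1$ to $g^{-1}\xi$ inside $N_\lambda(H)$; translating, $gr_2$ is a geodesic ray from $g$ to $\xi$ inside $N_\lambda(gH)$. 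The rays $r_1$ and $gr_2$ are asymptotic to the common ideal point $\xi$, and the ideal triangle with vertices $1$, $g$, $\xi$ is uniformly thin, so any point $p$ on $r_1$ with $d(1,p)$ sufficiently large lies within a constant $c_0=c_0(\delta)$ of some point $q$ on $gr_2$. Then $p$ is within $\lambda$ of some $h_1\in H$, and $q$ is within $\lambda$ of some $gh_2$ with $h_2\in H$, so $d(h_1,gh_2)\le 2\lambda+c_0$ and $h_1^{-1}gh_2\in HgH$ has length at most $R:=2\lambda+c_0$.

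The only delicate point — and the step I would watch most carefully — is confirming that every constant above is genuinely independent of $g$: the quasi-convexity constant $\lambda$ and the hyperbolicity constant $\delta$ are fixed once and for all, and the thinness constant for ideal triangles depends only on $\delta$; although the side $[1,g]$ of the ideal triangle $(1,g,\xi)$ has unbounded length $|g|$, it is simply \emph{absorbed} by choosing the matching point $p$ far enough out along $r_1$. Everything else (nonemptiness of the limit set of an infinite subgroup, the ray-approximation of boundary points of a quasi-convex subgroup, the identity $\Lambda(H^g)=g\,\Lambda(H)$, and thinness of ideal triangles) is routine hyperbolic geometry; in particular, quasi-convexity of the intersection $H\cap H^g$ is not needed here, only that it is infinite.
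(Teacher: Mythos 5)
Your proposal is correct and follows essentially the same route as the paper: reduce the count to double cosets $HgH$ with $H\cap H^g$ infinite, then bound the length of a shortest double coset representative. The only difference is that where the paper simply cites \cite[Lemma 1.2]{gmrs} for that bound, you supply a direct proof of it (via limit sets and thin ideal triangles, in place of the finite thin-quadrilateral argument of \cite{gmrs}), and your handling of the constants is sound up to the standard adjustment of $\lambda$ by an additive $\delta$-dependent amount when passing from geodesics between points of $H$ to rays toward $\Lambda(H)$.
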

\begin{proof}
Two double coset representatives $g_1, g_2$ of $HgH$
give the same $H$-conjugacy class $H\cap H^{g_i}< H$. By \cite[Lemma 1.2]{gmrs}, there is an upper
bound on the minimal length of a coset representative of $HgH$ such that $|H\cap H^g|=\infty$. 
\end{proof}

Using induction on the height we obtain the following.
\begin{corollary}
There are only finitely many $H$-conjugacy classes of 
intersections $H\cap H^{g_2}\cap \cdots\cap H^{g_j}$, where $j\leq n$,
with $n$ the height of $H$ in $G$ and $\{ 1, g_2, \ldots, g_j\}$  are essentially distinct. 
\end{corollary}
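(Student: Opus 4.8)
The plan is to prove the corollary by induction on $j$, with the immediately preceding lemma serving as the base case $j = 2$. So suppose $j \geq 3$ and that we already know there are only finitely many $H$-conjugacy classes of intersections of the form $U = H \cap H^{g_2} \cap \cdots \cap H^{g_{j-1}}$ over all choices of essentially distinct $\{1, g_2, \ldots, g_{j-1}\}$. Each such $U$ is itself a quasi-convex subgroup of $G$ (being an intersection of quasi-convex subgroups of a hyperbolic group), and since the whole collection $\{1, g_2, \ldots, g_j\}$ being essentially distinct forces the intersection to still be infinite when $j \leq n$, we only care about those $U$ which are infinite. Fix a (finite) set of representatives $U_1, \ldots, U_r$ for the $H$-conjugacy classes of such infinite intersections of depth $j-1$.

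Now I would analyze the depth-$j$ intersection $H \cap H^{g_2} \cap \cdots \cap H^{g_j}$. Writing $U = H \cap H^{g_2} \cap \cdots \cap H^{g_{j-1}}$, we have $H \cap H^{g_2} \cap \cdots \cap H^{g_j} = U \cap H^{g_j}$. By the inductive hypothesis there is some $h \in H$ with $h U h^{-1} = U_s$ for one of the fixed representatives; conjugating the whole intersection by $h$ replaces it by $U_s \cap H^{h g_j}$ and does not change its $H$-conjugacy class. So it suffices to bound, for each fixed $U_s$, the number of $H$-conjugacy classes (indeed, even $N_H(U_s)$-conjugacy, but $H$-conjugacy is all we need) of subgroups of the form $U_s \cap H^{k}$ with $U_s \cap H^k$ infinite. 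The key point is that this is essentially the same counting problem one level down: $U_s \cap H^k$ is the infinite intersection of the \emph{finite} collection of conjugates of $H$ consisting of $H, H^{g_2}, \ldots, H^{g_{j-1}}$ (a fixed collection depending on $s$) together with $H^k$, so it is governed by the same kind of bound. Concretely, applying \cite[Lemma 1.2]{gmrs} with $H$ replaced by considerations about the subgroup $U_s$ — or more directly, noting that $U_s \cap H^k$ infinite forces $H \cap H^k$ infinite and hence (by the base lemma) $k$ to lie in one of finitely many double cosets $H k_1 H, \ldots, H k_M H$ — we see that modulo conjugating $k$ on the left by an element of $H$ and on the right by an element of $H$ we may assume $k$ lies in a finite set; conjugating on the right by $H$ does not change $H^k$, and conjugating on the left by $h' \in H$ replaces $U_s \cap H^k$ by $h'(U_s \cap H^k)h'^{-1}$ only if $h'$ normalizes $U_s$, but in general it changes both the subgroup and the collection simultaneously in an $H$-conjugacy. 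Thus the set of $H$-conjugacy classes of $U_s \cap H^k$ is finite, and summing over the finitely many $s$ gives the result.

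The step I expect to be the main obstacle — or at least the one requiring the most care — is the bookkeeping in the induction: making precise the claim that after conjugating by a suitable element of $H$ to normalize the "tail" $H \cap H^{g_2} \cap \cdots \cap H^{g_{j-1}}$ to one of the finitely many representatives $U_s$, the remaining freedom in $g_j$ is controlled by a \cite[Lemma 1.2]{gmrs}-type bound. One must verify that the essential-distinctness hypothesis on $\{1, g_2, \ldots, g_j\}$ is compatible with (and survives) this conjugation, and that the infinitude of the full depth-$j$ intersection is genuinely used to invoke the bound on double-coset length — for $j \leq n$ it is automatic that a maximal-type infinite intersection exists, but for a general tuple one should restrict attention to those tuples whose intersection is infinite, which is exactly the content we need. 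Once this is set up, everything else is a finite union over finitely many choices, and the induction closes.
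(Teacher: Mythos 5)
Your overall strategy (induction on $j$ with the preceding lemma as the base case, normalizing the depth-$(j-1)$ tail to one of finitely many representatives $U_s$, and then counting the subgroups $U_s\cap H^k$) is consistent with the paper, which offers only the one-line justification ``using induction on the height.'' However, the reduction you actually spell out --- the ``more direct'' route via finiteness of $(H,H)$-double cosets --- does not close. Writing $k=h_1k_ih_2$ with $k_i$ in a finite set and $h_1,h_2\in H$, one gets $H^k=H^{h_1k_i}$ and hence $U_s\cap H^k=h_1\,(\,U_s^{h_1^{-1}}\cap H^{k_i}\,)\,h_1^{-1}$. This exhibits $U_s\cap H^k$ as $H$-conjugate to $U_s^{h_1^{-1}}\cap H^{k_i}$, but $h_1$ ranges over all of $H$, so $U_s^{h_1^{-1}}$ ranges over the a priori infinite $H$-conjugacy class of $U_s$; only the factor $H^{k_i}$ has been pinned down. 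You flag exactly this difficulty (``conjugating on the left by $h'$ \dots\ only if $h'$ normalizes $U_s$'') but then dismiss it rather than resolve it, so as written the finiteness of the set of conjugacy classes of $U_s\cap H^k$ has not been established.

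The repair is the two-subgroup version of \cite[Lemma 1.2]{gmrs}: $U_s$ is itself quasi-convex (a finite intersection of quasi-convex subgroups of a hyperbolic group), and if $U_s\cap H^k$ is infinite it contains a loxodromic whose axis fellow-travels both $U_s$ and the coset $kH$, so there are only finitely many double cosets $U_skH$ with $|U_s\cap H^k|$ infinite. Writing $k=uk_ih_2$ with $u\in U_s$ now gives $U_s\cap H^k=u\,(U_s\cap H^{k_i})\,u^{-1}$, because $u$ lies in (hence preserves) $U_s$, and the count is finite. Equivalently, run the induction on $H$-orbits of tuples of cosets $(g_2H,\dots,g_jH)$: after normalizing the first $j-2$ entries to a fixed tuple, the residual group acting on the last coset is exactly the stabilizer $U_s=H\cap H^{a_2}\cap\cdots\cap H^{a_{j-1}}$, so what is genuinely needed is finiteness of $U_s\backslash\{k:|U_s\cap H^k|=\infty\}/H$, not of the $(H,H)$-double cosets. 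One further small correction: essential distinctness of $\{1,g_2,\dots,g_j\}$ with $j\le n$ does \emph{not} force the intersection to be infinite --- height is a maximum over collections with infinite intersection, not a guarantee for every collection --- though, as you observe at the end, the statement is only needed (and should only be read) for the infinite intersections.
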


Choosing one subgroup of this form per
$H$-conjugacy class and taking its commensurator in $H$, 
we obtain a system $\mc{D}$ of (quasi-convex) subgroups of $H$
which we will call the \emph{malnormal core of $H$}.  
The collection $\mc{D}$ gives rise to a collection of peripheral
subgroups $\mc{P}$ for $G$ in two steps:  
\begin{enumerate}
\item Change $\mc{D}$ to $\mc{D}'$ by replacing each element of
  $\mc{D}$ by its commensurator in $G$.
\item Eliminate redundant entries of $\mc{D}'$ to obtain
  $\mc{P}\subseteq\mc{D}'$ which contains no two elements which are
  conjugate in $G$.
\end{enumerate}
Call $\mc{P}$ the peripheral structure on $G$ \emph{induced} by $H$.
This peripheral structure is only well-defined up to
replacement of some elements of $\mc{P}$ by conjugates.  
On the other hand, replacing $H$ by a commensurable subgroup of $G$
does not affect the induced peripheral structure.  
We consider two peripheral structures on a group to be the same if the
same group elements are parabolic in the two structures.
\begin{observation}
Let $H_1$ and $H_2$ be quasi-convex subgroups of the hyperbolic group 
$G$ with the same limit sets in $\partial G$.  
The peripheral structures induced by $H_1$ and $H_2$ are the same.
\end{observation}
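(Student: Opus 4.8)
The plan is to show that the induced peripheral structure depends only on the limit set $\Lambda(H) \subseteq \partial G$, by tracing through each step of its construction and checking that each step is determined by $\Lambda(H)$ alone. Recall that for quasi-convex $H < G$, the commensurator $\mathrm{Comm}_G(H)$ is precisely the stabilizer of $\Lambda(H)$ in $G$, and more generally any quasi-convex subgroup is commensurable with the stabilizer of its limit set; moreover the limit set of an infinite intersection $H \cap H^g$ is $\Lambda(H) \cap \Lambda(H^g) = \Lambda(H) \cap g\Lambda(H)$. These are standard facts about quasi-convex subgroups of hyperbolic groups and I would cite them (e.g.\ from \cite{gmrs} or standard references on hyperbolic groups).

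First I would observe that the hypothesis $\Lambda(H_1) = \Lambda(H_2)$ forces $H_1$ and $H_2$ to be commensurable in $G$: both are commensurable with $\stab(\Lambda(H_1)) = \stab(\Lambda(H_2))$. Since the text already remarks that replacing $H$ by a commensurable subgroup does not change the induced peripheral structure, this would in fact suffice. So the real content is to justify that earlier remark, i.e.\ to prove: if $H_1 < H_2$ with $[H_2 : H_1] < \infty$, then $H_1$ and $H_2$ induce the same peripheral structure. I would do this by examining the malnormal core. The infinite intersections $H_1 \cap H_1^g$ and $H_2 \cap H_2^g$ have the same limit set $\Lambda(H_1) \cap g\Lambda(H_1)$ (using that $H_1, H_2$ have the same limit set), hence are commensurable in $G$; a short argument shows that taking commensurators in $H_i$ and then in $G$ produces the same subgroups up to conjugacy, because the commensurator of a quasi-convex subgroup in $G$ depends only on its commensurability class, equivalently only on its limit set. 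So the collection $\mc{D}'$ (commensurators in $G$ of the infinite intersections) is the same for $H_1$ and $H_2$ up to conjugacy, and hence so is $\mc{P}$ after removing redundancies.

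The main obstacle I anticipate is bookkeeping: verifying that the \emph{collection} of $H_i$-conjugacy classes of infinite intersections — not just the individual subgroups — matches up correctly after passing to commensurators in $G$, since the conjugation and the commensurator operations are performed at two different group-theoretic levels ($H$ versus $G$). The key point that dissolves this difficulty is that the final output $\mc{P}$ is a collection of subgroups of $G$ considered only up to $G$-conjugacy, and the text's convention that two peripheral structures agree if the same group elements are parabolic. An element $g \in G$ is parabolic in the structure induced by $H$ precisely when $g$ lies in some $G$-conjugate of the commensurator of an infinite intersection of essentially distinct conjugates of $H$, which by the limit-set characterization is: $g$ fixes a point of $\partial G$ lying in an intersection $\Lambda(H) \cap g_2\Lambda(H) \cap \cdots \cap g_j\Lambda(H)$ of at least two translates of $\Lambda(H)$. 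This condition manifestly depends only on $\Lambda(H)$, so $H_1$ and $H_2$ yield the same parabolics and hence the same peripheral structure.
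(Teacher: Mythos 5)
Your opening reduction is exactly the argument the paper has in mind: the Observation appears immediately after the (unproved) remark that replacing $H$ by a commensurable subgroup does not change the induced peripheral structure, and since an infinite quasi-convex subgroup of a hyperbolic group has finite index in the stabilizer of its limit set, $\Lambda(H_1)=\Lambda(H_2)$ forces $H_1$ and $H_2$ to be commensurable. Up to that point you are correct and in step with the intended (one-line) proof. You are also right that the real content lies in justifying the commensurability-invariance remark, which the paper does not do.

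The gap is in the step you describe as dissolving the bookkeeping obstacle. The asserted equivalence ``$g$ is parabolic iff $g$ fixes a point of $\partial G$ lying in an intersection of at least two translates of $\Lambda(H)$'' is false in the backwards direction: being parabolic means lying in a $G$-conjugate of $\mathrm{Comm}_G(U)=\stab_G(\Lambda(U))$ for the relevant intersection $U$, and stabilizing $\Lambda(U)$ setwise is strictly stronger than fixing one of its points. For instance, in $F_2=\langle a,b\rangle$ with $V=\langle a^2,b\rangle$, the element $a$ fixes the endpoints of the axis of $a^2\in V$, hence a point of $\Lambda(V)$, yet $a\notin\stab(\Lambda(V))=\mathrm{Comm}_{F_2}(V)$ (otherwise $\langle V,a\rangle=F_2$ would contain $V$ with finite index, which it does not). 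So the parabolicity of $g$ is not ``manifestly'' a condition on $\Lambda(H)$ via pointwise fixing, and the obstacle you named remains live: essential distinctness ($g_iH\neq g_jH$) is not a limit-set condition, and when $H$ is a proper finite-index subgroup of $\mathrm{Comm}_G(H)$ there are essentially distinct conjugates of $H$ with identical limit sets, so the families of infinite intersections for $H_1$ and $H_2$ do not match tuple-by-tuple. What one must actually show is that the resulting collection of subgroups $\stab_G\bigl(\bigcap_i g_i\Lambda(H)\bigr)$, taken over the appropriate (maximal) collections of translates and up to conjugacy, depends only on $\Lambda(H)$ and its $G$-orbit; your commensurability observations are the right tools for this, but the final paragraph as written does not carry it out.
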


In the next two observations and lemma, we consider a hyperbolic group
$G$ and a quasi-convex subgroup $H$.
We let $\mc{D}$ be the malnormal core of $H$, and let $\mc{P}$ be the
peripheral structure on $G$ induced by $H$.  Finally, 
\[\check\iota\co X(H,\mc{D},T)\to X(G,\mc{P},S) \]
is the extension of the inclusion map given by Lemma \ref{l:check}.
\begin{observation}\label{o:induced}
If $\mc{P}$ is the peripheral
structure on $G$ induced by $H$, and $P\in \mc{P}$, then $H\cap P$ is
finite index in $P$. 
\end{observation}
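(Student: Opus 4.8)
The plan is to unwind the two-step construction of $\mc{P}$ from $\mc{D}$ and trace what happens to commensurability. Fix $P\in\mc{P}$. By construction, $P$ is the commensurator in $G$ of some $D\in\mc{D}$, and $D$ in turn is the commensurator in $H$ of some infinite intersection $U = H\cap H^{g_2}\cap\cdots\cap H^{g_j}$ arising from a collection of essentially distinct conjugates. I would first record the key commensurability facts: $U$ is infinite and quasi-convex (as an intersection of quasi-convex subgroups of a hyperbolic group), $D = \mathrm{Comm}_H(U)$ contains $U$ with finite index in each of them being commensurable (indeed $D$ is commensurable with $U$, since in a hyperbolic group the commensurator of a quasi-convex subgroup with infinite limit set contains it with... — more carefully: $U$ has finite index in $D$ because $D$ normalizes a finite-index subgroup of $U$ and everything is quasi-convex; I would cite the standard fact that for quasi-convex $U$ in hyperbolic $G$, $\mathrm{Comm}_G(U)$ is itself quasi-convex and contains $U$ as a finite-index subgroup, cf.\ the discussion around Theorem~\ref{t:finiteheight} and \cite{gmrs}), and likewise $P = \mathrm{Comm}_G(D)$ contains $D$ as a finite-index subgroup.

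Chaining these together, $U$, $D$, and $P$ are pairwise commensurable subgroups of $G$, and in particular $H\cap P \supseteq U$ has finite index in $P$ — wait, that is not quite enough, since I need $H\cap P$, not $U$, to have finite index in $P$, but $H\cap P\supseteq U$ and $U$ has finite index in $P$, so a fortiori $H\cap P$ has finite index in $P$. So the core of the argument is just: $U\le H\cap P\le P$ with $[P:U]<\infty$. The work is therefore entirely in establishing $[P:U]<\infty$, i.e.\ that $P=\mathrm{Comm}_G(\mathrm{Comm}_H(U))$ is commensurable with $U$.

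The step I expect to be the main obstacle is showing that the commensurator (in $G$, or in $H$) of a quasi-convex subgroup $U$ with infinite limit set contains $U$ with finite index — equivalently, that $\mathrm{Comm}_G(U)$ does not "blow up" into something noncommensurable with $U$. This is where quasi-convexity and hyperbolicity are essential: in general groups commensurators can be huge, but for an infinite quasi-convex subgroup $U$ of a hyperbolic group, every element of $\mathrm{Comm}_G(U)$ preserves the (infinite) limit set $\Lambda U\subseteq\partial G$, and the stabilizer of such a limit set is quasi-convex and contains $U$ with finite index — this is essentially the content of the work of Gitik–Mitra–Rips–Sageev and is closely related to the finiteness of height (Theorem~\ref{t:finiteheight}) and the preceding Lemma/Corollary on finitely many $H$-conjugacy classes of infinite intersections. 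I would phrase the proof as: invoke that $\mathrm{Comm}_G(U) = \stab(\Lambda U)$ up to finite index and that the latter contains $U$ finitely, conclude $U$, $D$, $P$ are mutually commensurable, and finish with the trivial inclusion $U\le H\cap P\le P$.
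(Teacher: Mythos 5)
Your argument is correct; the paper states this as an unproved observation, and your chain $U \le H\cap P \le P$ with $[P:U]<\infty$ (obtained by applying the commensurator step twice, once in $H$ and once in $G$) is exactly the intended justification. The one external input you need --- that the commensurator of an infinite quasi-convex subgroup of a hyperbolic group contains it as a finite-index subgroup, via the stabilizer of its limit set --- is standard and you have identified it correctly.
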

\begin{observation}\label{o:noaccidents}
If $\mc{P}$ is the peripheral structure on $G$ induced by $H$, and
$h\in H$ is parabolic with respect to that structure, then $h$ is
conjugate in $H$ to an element of $D$ for some $D$ in the malnormal
core $\mc{D}$ of $H$.
\end{observation}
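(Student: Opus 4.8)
The plan is to translate the hypothesis into a statement about commensurators of quasi-convex subgroups, using two standard facts about a hyperbolic group $G$: an intersection of quasi-convex subgroups is quasi-convex, and a quasi-convex subgroup has finite index in its commensurator (while the commensurator of any subgroup depends only on its commensurability class). First I would unwind the hypothesis. By construction, $\mc{P}$ consists of representatives, up to conjugacy in $G$, of the subgroups $\mathrm{Comm}_G(D)$, $D\in\mc{D}$, and each $D\in\mc{D}$ is $\mathrm{Comm}_H(U_0)$ for some representative $U_0=H\cap H^{g_2}\cap\cdots\cap H^{g_j}$ of an $H$-conjugacy class of infinite intersections, with $\{1,g_2,\dots,g_j\}$ essentially distinct and $j$ at most the (finite, by Theorem \ref{t:finiteheight}) height $n$ of $H$. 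Since $U_0$ is quasi-convex and has finite index in $\mathrm{Comm}_H(U_0)$, the assumption that $h$ is parabolic with respect to $\mc{P}$ yields an $f\in G$ and such a $U_0$ with $fhf^{-1}\in\mathrm{Comm}_G(U_0)$, equivalently $h\in\mathrm{Comm}_G(U_0')$ where $U_0':=f^{-1}U_0 f$ is again quasi-convex in $G$.

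Set $V:=H\cap U_0'$. I would first check $h\in\mathrm{Comm}_H(V)$: since $h\in H$ we have $hVh^{-1}=H\cap hU_0'h^{-1}$, and $hU_0'h^{-1}$ is commensurable with $U_0'$ by the choice of $h$, so $hVh^{-1}$ is commensurable with $V$ (intersecting a fixed subgroup with commensurable subgroups preserves commensurability). Next I would show $V$ is infinite, at least when $h$ has infinite order --- the only case needed for the paper, where $G$, and hence $H$, is torsion-free. Since $U_0'$ has finite index in its commensurator $E':=\mathrm{Comm}_G(U_0')$, itself a quasi-convex subgroup of $G$, the group $V$ has finite index in $H\cap E'$; and $h\in H\cap E'$ has infinite order, so $H\cap E'$, hence $V$, is infinite.

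Now I would locate $V$ in the construction of $\mc{D}$. From $U_0=H\cap H^{g_2}\cap\cdots\cap H^{g_j}$ one gets
\[ V = H\cap H^{f^{-1}}\cap H^{g_2 f^{-1}}\cap\cdots\cap H^{g_j f^{-1}}, \]
an intersection of conjugates of $H$ --- including $H=H^1$ itself --- by the elements $\{1,f^{-1},g_2 f^{-1},\dots,g_j f^{-1}\}$. Replacing this set by one representative of each left $H$-coset it meets (keeping $1$) alters none of these conjugates, since $gH=g'H$ forces $H^g=H^{g'}$; hence it does not alter $V$. As $V$ is infinite, Definition \ref{d:height} bounds the number of remaining representatives by $n$, so $V$ is an infinite intersection of exactly the type used to build $\mc{D}$. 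Thus $V$ is conjugate in $H$ to a chosen representative $U_1$, with $D_1:=\mathrm{Comm}_H(U_1)\in\mc{D}$; writing $V=s^{-1}U_1 s$ with $s\in H$, conjugation by $s$ gives $\mathrm{Comm}_H(V)=s^{-1}D_1 s$. Combining with $h\in\mathrm{Comm}_H(V)$ yields $shs^{-1}\in D_1$, i.e.\ $h$ is conjugate in $H$ into $D_1\in\mc{D}$, as required.

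The crux is showing that $V$ is infinite, i.e.\ that the parabolic element $h$ actually ``sees'' an infinite intersection of conjugates of $H$. For $h$ of infinite order this follows from the finite-index-in-commensurator fact, as above. When $h$ has finite order the argument is more delicate: one must pass to limit sets, using $\Lambda(A\cap B)=\Lambda A\cap\Lambda B$ for quasi-convex $A,B$ together with the dynamics of $\langle h\rangle$ on $\partial G$ to control $\Lambda H\cap f^{-1}\Lambda U_0$. Since this observation is only applied with $G$ torsion-free, I would treat the infinite-order case in full and note that the finite-order case does not arise in the applications. The only other point needing care is the coset bookkeeping above, which is harmless precisely because essentially distinct conjugates lying in a common left coset coincide.
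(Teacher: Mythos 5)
Your argument is correct, and it fills in details the paper leaves unwritten: Observation \ref{o:noaccidents} is stated without proof, as something meant to follow directly from the construction of $\mc{D}$ and $\mc{P}$ via commensurators. Your route --- unwinding parabolicity to $h\in\mathrm{Comm}_G(U_0')$, passing to $V=H\cap U_0'$, using finite index of a quasi-convex subgroup in its commensurator to see $V$ is infinite, and recognizing $V$ as an intersection of essentially distinct conjugates of $H$ so that $\mathrm{Comm}_H(V)$ is $H$-conjugate to an element of $\mc{D}$ --- is exactly the verification the authors are implicitly relying on, and the two auxiliary facts you invoke (intersections of quasi-convex subgroups are quasi-convex; quasi-convex subgroups have finite index in their commensurators) are the same ones the paper uses elsewhere (e.g.\ ``they are equal to their commensurator''). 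Two small points: with the paper's convention $H^g=gHg^{-1}$, the conjugators in your displayed formula should be $f^{-1}, f^{-1}g_2,\dots,f^{-1}g_j$ rather than $f^{-1}, g_2f^{-1},\dots$, though this does not affect the coset bookkeeping; and your restriction to infinite-order $h$ is harmless since the observation is only invoked for torsion-free $G$ (in Lemma \ref{l:allornothing}, for an infinite-order $h$), where the trivial element is the only remaining case.
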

\begin{lemma} \label{l:allornothing}
There is a constant $\beta$
satisfying the following:  Let $A$ be a horoball of $X(G,\mc{P},S)$,
and suppose that $H\cap \stab_G(A)$ contains an element of infinite order.  A
$\beta$-neighborhood of the image of $\check\iota$ contains $A$.
\end{lemma}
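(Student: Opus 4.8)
The plan is to exploit the fact that the horoball $A$ is stabilized (setwise) by a conjugate $P^g$ of some peripheral subgroup $P\in\mc{P}$, and to show that the subgroup $H\cap P^g$ is quasi-isometrically embedded in $P^g$ and ``fills up'' the horoball $A$ up to bounded error. First I would recall that, by construction of the induced peripheral structure, each $P\in\mc{P}$ is the $G$-commensurator of some element $D$ of the malnormal core $\mc{D}$, and by Observation \ref{o:induced} the intersection $H\cap P$ has finite index in $P$; conjugating, $H\cap P^g$ has finite index in $P^g=\stab_G(A)$. Since $H\cap\stab_G(A)$ contains an element of infinite order by hypothesis, $P^g$ is infinite, and the finite-index subgroup $H\cap P^g$ is infinite as well. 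By Observation \ref{o:noaccidents}, an infinite-order element $h\in H\cap P^g$ is conjugate in $H$ into some $D'\in\mc{D}$; replacing $A$ and everything in sight by an $H$-translate (which does not move the image of $\check\iota$, by $H$-equivariance), I may assume $h$ lies in the horoball $\check\iota(B)$ where $B$ is the horoball of $X(H,\mc{D},T)$ associated to the coset $D'$ itself.

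Next I would set up a coarse comparison between the two horoballs $B\subset X(H,\mc{D},T)$ and $A\subset X(G,\mc{P},S)$. The map $\check\iota$ sends $B$ into (a bounded neighborhood of) $A$: indeed $\check\iota$ is the extension of the inclusion provided by Lemma \ref{l:check}, and since $D'=H\cap P^g$ up to finite index, the peripheral coset that $\check\iota$ sends $D'$ to is $P^g$ itself, so $\check\iota(B)$ lands in the horoball over $P^g$, which is $A$. The point is then to show the reverse coarse inclusion. A vertex of $A$ is an ordered pair $(x,n)$ with $x\in P^g=\stab_G(A)$ and $n\in\N$. Because $H\cap P^g$ has finite index $r$ in $P^g$, every $x\in P^g$ lies within bounded distance (in the word metric on $P^g$ with respect to $S\cap P^g$, hence in $\Gamma(G,S)$) of an element $x'\in H\cap P^g$; call this bound $r_0$. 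Then $(x,n)$ is within horoball distance roughly $r_0$ of $(x',n)$ (moving horizontally at depth $n$ costs at most $\lceil\log_2 r_0\rceil+2$ by the definition of the combinatorial horoball, Definition \ref{d:combhoro}), and $(x',n)$ lies in the image $\check\iota(B)$ since $x'\in D'$. Hence every vertex of $A$ is within a uniform distance $\beta$ of $\check\iota(X(H,\mc{D},T))$, with $\beta$ depending only on $G$, $\mc{P}$, $S$, $\mc{D}$, $T$ (through the Lipschitz constant of $\check\iota$ and the maximal finite index of $H\cap P$ in $P$ over the finitely many $P\in\mc{P}$).

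The main obstacle I anticipate is verifying the claim that $\check\iota$ actually carries the horoball $B$ over $D'$ \emph{into} the horoball $A$ over $P^g$, rather than merely near it: one must check that the element $c_{D'}$ chosen in Lemma \ref{l:check} (a shortest $G$-element conjugating $\phi(D')=D'$ into a peripheral) can be taken trivial, or at least $P^g$-length bounded, precisely because $D'$ is \emph{already} finite-index in the peripheral $P^g$ — this is exactly what Observations \ref{o:induced} and \ref{o:noaccidents} are engineered to give. The secondary bookkeeping obstacle is making the constant $\beta$ genuinely uniform: this requires that there are only finitely many $P\in\mc{P}$, each with a fixed finite index $[P:H\cap P]$, and that the Lipschitz constant $\alpha$ from Lemma \ref{l:check} is a single number — all of which hold. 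Once these two points are pinned down, the estimate on horizontal travel in a combinatorial horoball (logarithmic in the horizontal distance, by item (2) of Definition \ref{d:combhoro}) closes the argument.
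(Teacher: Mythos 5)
Your proposal is correct and follows essentially the same route as the paper's proof: use Observation \ref{o:noaccidents} to place the infinite-order element of $H\cap\stab_G(A)$ in an $H$-conjugate of some $D\in\mc{D}$, identify the peripheral coset under $A$ with the corresponding coset $scP$ via $\check\iota$, and use Observation \ref{o:induced} (finite index, hence coarse density of $sDc$ in $scP$) together with the logarithmic cost of horizontal travel in a combinatorial horoball to get a uniform $\beta$. The ``main obstacle'' you flag is in fact harmless exactly as you suspect: $\check\iota$ sends the horoball over $sD$ to vertices $(scP,hc,n)$, which lie \emph{in} $A$ by definition once one notes (as the paper does) that distinct parabolics cannot have infinite intersection, so the conjugating element $c$ only contributes a bounded additive error, not a change of horoball.
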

\begin{proof}
For each $D \in \mc{D}$, there is some (unique) $P=P(D)\in \mc{P}$ and some $c=c(D)\in G$
(chosen as in the proof of Lemma \ref{l:check}) so that 
$D < cPc^{-1}$.  By Observation \ref{o:induced}, $D$ is finite
index in $c Pc^{-1}$.  Since $\mc{D}$ is finite, there is some
constant $\beta_1$, independent of $D$, so that $cP$ is contained in a
$\beta_1$-neighborhood of $D$ in $G$.

Let $h$ be the infinite order element of $H\cap \stab_G(A)$.
Observation \ref{o:noaccidents} implies that $h$ is already parabolic
in $H$, so $h\in sDs^{-1}$ for some $s\in H$ and $D\in \mc{D}$.

The horoball $A$ is attached to some coset $tP_i$ for $t\in G$ and
$P_i\in \mc{P}$. 
Since parabolics cannot have
infinite intersection without coinciding, it follows that $P_i =
P(D)$; we may take $t=sc$.  

It follows from the first paragraph that $tP_i=scP_i$ is contained
in a $\beta_1$-neigh\-bor\-hood of $sD$.  Moreover, elements of $tP_i$ are
uniformly close to elements of $sDc$, and elements of the horoball $A$
are uniformly close to elements of the form
\[\check\iota(sD, h, n) = (t P_i, hc, n). \]
In other words, the vertices of $A$ which do not lie in $G$ are all
contained in some $\beta_2$-neighborhood of the image of
$\check\iota$.

We may therefore take $\beta = \max\{\beta_1,\beta_2\}$.
\end{proof}

\begin{example}
Let $G = \langle a, b\rangle$ be a free group of rank $2$ and let 
$H = \langle a^2 , b a^3 b^{-1}\rangle$.  In this case one must take
commensurators twice, once to get the malnormal core and a second
time to get the induced peripheral structure.  Indeed, the 
intersections of $H$ with its conjugates are conjugate in $H$ either to
\[ \langle a^6\rangle = H\cap H^a\cap H^b\cap H^{ab^{-1}}\cap H^{a^2b^{-1}}\]
 or 
\[\langle ba^6b^{-1}\rangle = H\cap H^{bab^{-1}}\cap
H^{ba^2b^{-1}}\cap H^b\cap H^{ba},\] 
so $H$ has height $5$.
The malnormal core of $H$ is 
\[\mathcal{D} = \{\langle a^2\rangle, \langle b a^3 b^{-1}\rangle\}\]
and the induced peripheral structure on $G$ is 
$\mc{P} = \{\langle a \rangle\}$.
\end{example}

\begin{definition} 
Let $\phi\co H\to G$ be a homomorphism which respects the peripheral
structure.  We say that $\phi(H)$ is $C$-\emph{relatively quasi-convex} in $G$
if $\check{\phi}$ has $C$-quasi-convex image.  If $H<G$ and $\phi$ is the
inclusion map, we say that $H$ is a relatively quasi-convex
subgroup of $G$.
\end{definition}
The relative quasi-convexity of $\phi(H)$ does not depend on the
choice of relative generating sets $S$ and $T$, though the constant
$C$ does depend on $S$ and $T$.

\begin{proposition}\label{p:induced}
Let $H$ be a quasi-convex subgroup of the torsion-free hyperbolic group $G$, and
let $\mc{D}$ be the malnormal core of $H$.  
Let $\mc{P}$ be the peripheral structure on $G$ induced by $H$.
\begin{enumerate}
\item $H$ is hyperbolic relative to $\mc{D}$.
\item $G$ is hyperbolic relative to $\mc{P}$.
\item\label{pt3} With respect to the above peripheral structures, $H$ is a
  relatively quasi-convex subgroup of $G$.
\end{enumerate}
\end{proposition}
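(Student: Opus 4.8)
The plan is to reduce parts (1) and (2) to a standard criterion for relative hyperbolicity of a hyperbolic group, and to prove part (3) by analysing geodesics in the cusped space $X(G,\mc{P},S)$ directly, using Lemma \ref{l:allornothing}.

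For (1) and (2) I would invoke the theorem of Bowditch that a hyperbolic group is hyperbolic relative to a finite family of subgroups whenever that family consists of quasi-convex subgroups and is \emph{almost malnormal} (any two essentially distinct conjugates of members intersect in a finite group). So it suffices to check, for $\mc{D}$ inside $H$ and for $\mc{P}$ inside $G$, that the members are quasi-convex and the family is almost malnormal. Quasi-convexity: each infinite intersection $U=H\cap H^{g_2}\cap\cdots\cap H^{g_j}$ is a finite intersection of quasi-convex subgroups, hence quasi-convex (Short's lemma); and the commensurator in $H$ (or $G$) of an infinite quasi-convex subgroup is again quasi-convex, since it is the stabiliser of the subgroup's limit set and contains the subgroup with finite index (two quasi-convex subgroups with the same limit set being commensurable). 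Hence all members of $\mc{D}$, $\mc{D}'$, and $\mc{P}$ are quasi-convex in the relevant ambient group. Almost malnormality: passing to commensurators makes each $D\in\mc{D}$ its own commensurator in $H$ and each $P\in\mc{P}$ its own commensurator in $G$, and the last step of the construction removes $G$-conjugate duplicates from $\mc{P}$; it then remains to show that an infinite intersection $D_1\cap D_2^h$ with $h\in H$ forces $h\in D_1$ and $D_1=D_2$, and likewise in $G$. For this I would use that $U_i$ has finite index in $D_i$, so $U_1\cap U_2^h$ is infinite, that $U_2^h$ is again an intersection of essentially distinct conjugates of $H$ (as $h\in H$), and that $U_1$ is the intersection over a \emph{maximal} collection of essentially distinct conjugates (of size at most the height, Theorem \ref{t:finiteheight}) and so cannot be refined while remaining infinite; unwinding this forces $h\in D_1$ and $\Lambda(D_1)=\Lambda(D_2^h)$, hence $D_1=D_2^h$ and then $D_1=D_2$. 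The argument for $\mc{P}$ in $G$ is the same, additionally using Observation \ref{o:induced} that $H\cap P$ has finite index in each $P\in\mc{P}$. Then Bowditch's theorem yields (1) and (2).

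For (3) I must show that $\check\iota\co X(H,\mc{D},T)\to X(G,\mc{P},S)$ has quasi-convex image; since $\mathrm{im}(\check\iota)$ is, up to bounded Hausdorff distance, the union of $H\subseteq\Gamma(G,S)$ with the horoballs of $X(G,\mc{P},S)$ sitting over the peripheral cosets it meets, it suffices to show that a geodesic $\gamma$ of $X(G,\mc{P},S)$ between two points of $H$ lies in a uniform neighbourhood of $\mathrm{im}(\check\iota)$. Take $\gamma$ regular, so that it alternates between segments in $\Gamma(G,S)$ and excursions into horoballs that are vertical except for one horizontal subsegment. An excursion into a horoball $A$ with $H\cap\stab_G(A)$ containing an element of infinite order is harmless: Lemma \ref{l:allornothing} puts all of $A$ within $\beta$ of $\mathrm{im}(\check\iota)$. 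If $H\cap\stab_G(A)$ is finite, then $A$ sits over a coset $tP_i$, and the points where $\gamma$ enters and leaves $A$, being near $H$, lie in a bounded subset of $tP_i$, so $\gamma$ penetrates $A$ only to bounded depth. Finally, the $\Gamma(G,S)$-portions of $\gamma$ are controlled by the word-quasi-convexity of $H$ in $G$: one projects $\gamma$ to the coned-off Cayley graph of $(G,\mc{P})$, in which it becomes a quasi-geodesic and in which the image of $H$ is quasi-convex (a standard consequence of word-quasi-convexity of $H$ together with relative hyperbolicity of $(G,\mc{P})$), so $\gamma$ stays near $H$ there; combining this with the bounded-penetration statement lifts back to $X(G,\mc{P},S)$.

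The main obstacle is part (3): one must simultaneously bound the depth to which a cusped-space geodesic between two points of $H$ can dive into the horoballs whose stabiliser meets $H$ only finitely, and control the Cayley-graph segments, which are not word-geodesics because $\gamma$ may take horoball shortcuts. Lemma \ref{l:allornothing} is exactly what quarantines the ``deep'' horoballs; the rest of the work --- matching entrance and exit points on the shallow horoballs, bounding their penetration, and transferring quasi-convexity among $\Gamma(G,S)$, the coned-off graph, and $X(G,\mc{P},S)$ --- is the delicate part. Verifying almost malnormality in (1)--(2) is also a little intricate, but it is bookkeeping about heights and cosets rather than geometry.
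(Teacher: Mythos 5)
Your treatment of parts (1) and (2) is the same as the paper's: both reduce to Bowditch's criterion \cite[Proposition 7.11]{bowditch:relhyp} (quasi-convex, almost malnormal, non-conjugate, equal to their own commensurators/normalizers), and your verification of quasi-convexity and almost malnormality of $\mc{D}$ and $\mc{P}$ is the bookkeeping the paper leaves implicit. For part (3) you take a genuinely different route. The paper never passes through the coned-off Cayley graph: it truncates the cusped space at depth $100\delta$ (where $H$ is $\lambda$-quasi-convex because the truncated space is quasi-isometric to $G$), takes a geodesic there, and surgers its deep horoball excursions into two-vertical-plus-one-horizontal segments to produce a $10\delta$-local geodesic of $X$; local-geodesic stability then controls actual $X$-geodesics. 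The dichotomy you want (``infinite stabilizer intersection $\Rightarrow$ Lemma \ref{l:allornothing} quarantines the whole horoball; finite intersection $\Rightarrow$ bounded penetration'') appears in the paper as the two subcases on the length of a surgered excursion $\tau$, where the finite-intersection case is excluded by a pigeonhole count: a long excursion forces $K+1$ points of $H$ in a $\lambda$-neighborhood of $A_0$, two of which differ by a nontrivial, hence (by torsion-freeness) infinite-order, element of $\stab_G(A)$. Your coned-off route is essentially the one the paper's remark attributes to \cite{mp}, and it buys generality; the paper's route is more self-contained and produces explicit constants.

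One step of your part (3) does not follow as written and needs the paper's pigeonhole argument (or an equivalent) to repair. You assert that the entry and exit points of $\gamma$ on a ``finite-intersection'' horoball, ``being near $H$, lie in a bounded subset of $tP_i$.'' Two things are missing. First, ``near $H$'' must mean near in the Cayley-graph (or truncated-space) metric; quasi-convexity of $H$ in the coned-off graph only gives closeness in the coned-off metric, and a single cone edge hides an arbitrarily long jump inside a peripheral coset, so the transfer is not automatic --- this is exactly the electrified-to-cusped lifting you flag as delicate, and it is where the real work lies. Second, even granting Cayley-metric closeness, the claim that $\{p\in tP_i : d(p,H)\le R\}$ is bounded when $H\cap tP_it^{-1}$ is finite is not free: it is the contrapositive of the counting argument above (unboundedly many such $p$ force two in the same $\stab_G(A)$-orbit modulo a finite fundamental domain, giving an infinite-order element of $H\cap\stab_G(A)$, contradicting finiteness). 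With those two points supplied, your outline closes up; without them, the finite-intersection case is circular, since ``the geodesic stays near $H$'' is what is being proved.
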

\begin{proof}
The first two assertions are essentially contained in \cite[Proposition 7.11]{bowditch:relhyp}.  By construction, the elements of $\mc{D}$ are quasi-convex,
non-conjugate, and any pair of conjugates of elements of $\mc{D}$ are either
equal or intersect in a finite set.  They are also equal to their commensurator, are
hence to their normalizer.  These are the hypotheses of \cite[Proposition 7.11]{bowditch:relhyp}.  The same properties hold for $\mc{P}$ in $G$.

We now consider the third property.  Let $X = X(G,\mc{P},S)$ be the cusped space 
of $G$. Let $X_H$ be the zero-skeleton of the cusped space of $H$, and let $Y$ be the image
of the proper map $\check\phi\co X_H\to X$ from Lemma \ref{l:check}.
Let $x$ and $y$ be vertices of $Y$.

We need to prove that there is a constant $C$ (independent of $x$ and $y$) so that 
a geodesic in $X$ between $x$ and $y$ lies within $C$ of $Y$.

\begin{case}
The points $x$ and $y$ lie deep (deeper
than $50\delta$) in the same
horoball.  
\end{case}
In this case the geodesic between $x$ and $y$ lies entirely in the
horoball (see \cite[Lemma 3.26]{rhds}).
Any geodesic between $x$ and $y$ is Hausdorff distance at most $4$
from a regular geodesic, which is vertical except for a horizontal
segment of length at most three (see \cite[Lemma 3.10]{rhds}).
The vertical sub-segments start at points in $Y$, so by construction of
cusped spaces and the map 
$\check\phi$, the vertical sub-segments lie in $Y$ also.  Therefore in this case we can
take $C = 6$.

\begin{case}
$x$ and $y$ lie at depth no more than $50\delta$ in $X$.
\end{case}
In this case, consider the space $X'$ which consists of all vertices in $X$ at depth
at most $100\delta$.  This space is quasi-isometric to the group $G$, and $H$ is a
quasi-convex subset of $X'$, with quasi-convexity constant $\lambda$, say.  
Choose a geodesic $\gamma$  in $X'$ between $x$ and $y$.  We may
assume that $\gamma$ is ``regular'' in each horoball, in the following
sense:  If $\gamma$ contains vertices at depth $90\delta$ in the
horoball, then that part of $\gamma$ between depth $0$ and depth
$90\delta$ consists of two vertical segments.

Since $H$ is $\lambda$-quasi-convex in $X'$,
 there is an element of $H$ within $\lambda$ of any point in $\gamma$.  Now
consider $\gamma$ as a subset of $X$, using the natural inclusion of $X'$ in $X$.
We will replace $\gamma$ with a $10\delta$-local geodesic
$\bar{\gamma}$ in $X$ with endpoints
$x$ and $y$.  The path $\bar{\gamma}$ will be seen to lie in a
uniformly bounded neighborhood of $Y$.

Let $\sigma$ be a sub-segment of $\gamma$ lying entirely below depth
$90\delta$.  Any such $\sigma$ is contained in a unique maximal
segment $\hat{\sigma}$ lying below depth $80\delta$.  To define
$\bar{\gamma}$, we replace each
such $\hat{\sigma}$ with an $X$-geodesic consisting of two vertical
and one horizontal sub-segment.

This yields a continuous path $\bar{\gamma}$ from
$x$ to $y$ which we claim is a $10\delta$-local geodesic in $X$.  
Consider a sub-segment $I$ of $\bar{\gamma}$ of length $10\delta$.
We must show that $I$ is a geodesic in $X$.
If $I$ lies completely beneath 
depth $80\delta$ it is obvious that $I$ is
geodesic.  

Suppose $I$ lies entirely above depth $80\delta$.  Any path joining
the endpoints of $I$ which is not entirely contained in $X'$ must have
length at least $40\delta$.  Since $I$ has length $10\delta$ and is a
geodesic in $X'$, $I$ is a geodesic in $X$.

Finally, between depths $70\delta$ and $90\delta$, $\bar{\gamma}$ is
vertical, and hence geodesic.  In particular, if $I$ crosses depth
$80\delta$, then $I$ is geodesic.
  This shows that $\bar{\gamma}$ is a $10\delta$-local
geodesic between $x$ and $y$.

We claim that $\bar{\gamma}$ lies in a bounded neighborhood of $Y$.
This is clear for those parts of $\bar{\gamma}$ which lie in $\gamma$, so
let $\tau$ be a maximal sub-segment of $\bar{\gamma}$ which is not contained
in $\gamma$.  Then $\tau$ is contained in a single horoball $A$.
We now split into two subcases, depending on the length of $\tau$.  Let $A_0$ be
the part of $A$ at depth $0$, and $A_\lambda$ be the $\lambda$-neighborhood of
$A_0$ in $X$.  Also let $G_A = \text{Stab}(A)$.  Then $G_A$ acts 
cocompactly on $A_\lambda$. Let $K$ be the number of vertices in $A_\lambda / G_A$.

\begin{subcase}
The length of $\tau$ is less than $\Const$.
\end{subcase} 
Then each point in $\tau$ is at most $\Const/2$ from an endpoint of $\tau$.  However,
the endpoints of $\tau$ lie in $\gamma$, which lies in a $\lambda$-neighborhood of
$Y$.  Thus in this case each element of $\tau$ lies within $\Const/2 + \lambda$ of $Y$.

\begin{subcase}
The length of $\tau$ is at least $\Const$.
\end{subcase}
In this case, consider the path $\tilde{\tau} \subset \gamma$ which joins the
endpoints of $\tau$.  The path $\tilde{\tau}$ has length at least $\logConst$.  
Each point in $\tilde{\tau}$ is within at most $\lambda$ from a point in $H$, so there
are at least $K+1$ distinct points, $\{ h_1, \ldots , h_{K+1} \}$, all within $\lambda$ of $\tilde{\tau}$ and each of these 
points lies in $A_\lambda$.  By the choice of $K$, there is $h_i \ne h_j$ in the same
$G_A$-orbit, so $h_ih_j^{-1} \in G_A\smallsetminus\{1\}$.  
Since $G$ is torsion-free, $h_ih_j^{-1}$ has infinite order, and
by Lemma \ref{l:allornothing}, a $\beta$-neighborhood of $Y$ contains
$A$.

Let $\eta$ be a geodesic joining $x$ to $y$ in $X$.
By \cite[III.H.1.13(1)]{bridhaef:book}, $\eta$ lies in a $2\delta$
neighborhood of $\bar{\gamma}$, which we have already shown lies in a bounded
neighborhood of $Y$.

\begin{case}
Either $x$ or $y$ lies inside a
$50\delta$-horoball, but we are not in Case 1.   
\end{case}
If $x$ or $y$ lies in a horoball, it lies
directly beneath a point in $H$ at depth $0$ (i.e. in the Cayley graph
of $G$) in $X$.  Either appending or deleting
\footnote{Whether a vertical path is appended or deleted depends on whether the
geodesic from $x$ to $y$ initially goes up or down (the case when it goes horizontal is
treated as if it goes down).}
the vertical paths from $x$ to depth $0$ and similarly for $y$ we obtain a path which
is a $10\delta$-local geodesic.  Since $10\delta$-local geodesics are
$(\frac{7}{3},2\delta)$-quasi-geodesics
\cite[III.H.1.13(3)]{bridhaef:book}, the proposition follows now from
Cases 1 and 2.
\end{proof}

\begin{remark}
We direct the interested reader to \cite{mp} for a much more general
theorem from which part \eqref{pt3} of Proposition \ref{p:induced} follows.
\end{remark}

In general, even if $G$ is hyperbolic, a relatively quasi-convex subgroup
(with respect to some relatively hyperbolic structure on $G$) need
not be quasi-convex in $G$.  However, the following lemma is
straightforward.

\begin{lemma} \label{l:finiteparabolics}
Suppose that $G$ is hyperbolic relative to a collection of finite subgroups.  Then $G$ is hyperbolic and any relatively quasi-convex
subgroup of $G$ is quasi-convex.
\end{lemma}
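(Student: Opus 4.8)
The plan is to verify the two assertions in turn, both of which follow from the observation that when the peripheral subgroups are finite, the cusped space is quasi-isometric to the Cayley graph. First I would recall that if $G$ is hyperbolic relative to a finite collection $\mc{P}$ of \emph{finite} subgroups, then each combinatorial horoball $\mc{H}(\Gamma_{i,t})$ is built on a finite graph $\Gamma_{i,t}$, so the horoball has bounded diameter in the horizontal direction at every depth; more precisely, at depth $k$ with $2^k \geq \operatorname{diam}(\Gamma_{i,t})$ the entire slice is a clique, and vertical edges connect consecutive depths. Consequently each horoball deformation retracts (coarsely) onto its depth-zero slice $\Gamma_{i,t}$, and the inclusion $\Gamma(G,S) \hookrightarrow X(G,\mc{P},S)$ is a quasi-isometry: a point at depth $k$ in a horoball is distance $k$ from the Cayley graph, but also the logarithmic structure of the horoball means no two points of $\Gamma_{i,t}$ are ever more than $O(\log|\Gamma_{i,t}|)$ apart inside the horoball, so the inclusion is coarsely surjective and distance-preserving up to a multiplicative and additive constant depending on $\max_i |P_i|$. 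Since $X(G,\mc{P},S)$ is $\delta$-hyperbolic by the characterization quoted just before Proposition \ref{p:induced} (item (2) of that proposition's cited criterion), and hyperbolicity is a quasi-isometry invariant, $\Gamma(G,S)$ is hyperbolic, i.e. $G$ is a hyperbolic group. (Alternatively one can cite that a group hyperbolic relative to finite subgroups is hyperbolic, which is classical — but the cusped-space argument keeps everything self-contained.)

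For the second assertion, let $H < G$ be relatively quasi-convex, meaning by the definition given above that $\check\iota \co X(H,\mc{D},T) \to X(G,\mc{P},S)$ has $C$-quasi-convex image $Y$ for the inclusion map $\check\iota$ provided by Lemma \ref{l:check}, where $\mc{D}$ is the induced peripheral structure on $H$ (also a collection of finite subgroups, since the $D_i$ map into conjugates of the finite $P_j$). I would then argue that $Y \cap \Gamma(G,S)$ is quasi-convex in $\Gamma(G,S)$: given $x,y \in H \subset \Gamma(G,S)$, a $\Gamma$-geodesic between them is, under the quasi-isometry $\Gamma(G,S) \hookrightarrow X(G,\mc{P},S)$, a quasi-geodesic in $X$, hence (by stability of quasi-geodesics in the $\delta$-hyperbolic space $X$) Hausdorff-close to an $X$-geodesic, which by $C$-quasi-convexity of $Y$ lies in a $C$-neighborhood of $Y$; projecting back to $\Gamma(G,S)$ via the coarse retraction $X \to \Gamma(G,S)$ (which moves $Y$ into a bounded neighborhood of $Y \cap \Gamma = H$, since horoballs on finite graphs are boundedly close to their base) shows the $\Gamma$-geodesic lies in a bounded neighborhood of $H$. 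Thus $H$ is quasi-convex in $G$.

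The one genuinely fiddly point — the main obstacle, though it is more bookkeeping than real difficulty — is making the quasi-isometry constants between $X(G,\mc{P},S)$ and $\Gamma(G,S)$ explicit enough, and checking that the coarse retraction $X \to \Gamma$ carries the image $Y$ of the $H$-cusped space into a uniformly bounded neighborhood of $H$. This requires knowing that each horoball $\mc{H}(\Gamma_{i,t})$ on a finite graph has diameter bounded in terms of $\max_i|P_i|$ alone: indeed its depth-$k$ slices for $k \geq \log_2(\max_i |P_i|)$ are complete graphs, so the whole horoball lies within distance $\log_2(\max_i|P_i|) + 1$ of its base, giving a retraction that moves points by a bounded amount. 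Once this uniform bound is in hand the two quasi-convexity transfers above go through routinely, and the lemma follows.
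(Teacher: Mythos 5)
The paper gives no proof of this lemma (it is dismissed as ``straightforward''), so I can only assess your argument on its own terms. Your overall strategy --- transferring hyperbolicity and quasi-convexity between the Cayley graph $\Gamma(G,S)$ and the cusped space $X=X(G,\mc{P},S)$ --- is surely what the authors intend, but the geometric claim you rest it on is false. A combinatorial horoball $\mc{H}(\Gamma_{i,t})$ over a finite graph is \emph{not} within bounded distance of its base: its vertex set is $\Gamma_{i,t}^{(0)}\times(\{0\}\cup\N)$, and the vertex $(v,k)$ is at distance exactly $k$ from the depth-zero slice, since horizontal edges preserve depth and one must traverse $k$ vertical edges to return to depth $0$. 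The fact that the depth-$k$ slice is a clique once $2^k\geq\operatorname{diam}\Gamma_{i,t}$ does not bound the diameter of the horoball; the horoball is quasi-isometric to a ray, not to a point. Consequently the inclusion $\Gamma(G,S)\hookrightarrow X$ is \emph{not} coarsely surjective, $X$ is not quasi-isometric to $\Gamma(G,S)$, and the ``coarse retraction $X\to\Gamma(G,S)$'' you invoke moves points by an unbounded amount --- in particular it need not carry $Y=\check\iota(X_H)$, which contains points at arbitrary depth, into a bounded neighborhood of $H$.

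The argument is repairable because you never need the full quasi-isometry, only three weaker facts. (i) The inclusion of the vertex set $G$ into $X$ is a quasi-isometric \emph{embedding}: $d_X\leq d_\Gamma$ always, and conversely any $X$-path between group elements can be converted to a $\Gamma$-path by replacing each horoball excursion (which enters and exits at two points of the same finite coset $tP_i$) by a $\Gamma$-path of length at most $M:=\max_i\operatorname{diam}P_i$, so $d_\Gamma\leq M\,d_X$ on $G$. This already makes $\Gamma$-geodesics into $X$-quasi-geodesics and gives hyperbolicity of $G$. (ii) An $X$-geodesic between two depth-zero vertices penetrates any horoball only to depth about $\log_2 M+1$: below that depth any two vertices of a slice are joined by a single horizontal edge, so descending further cannot shorten the path. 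Thus the \emph{geodesic} stays in a bounded neighborhood of $\Gamma(G,S)$ even though $X$ does not. (iii) A point of $Y$ at depth $n$ lies directly below an element of $Hc_i$ with $|c_i|_S\leq b$ (notation of Lemma \ref{l:check}), hence within $n+b$ of $H$; combined with (ii), the points of $Y$ lying within $C$ of the $X$-geodesic $[x,y]_X$ have depth at most $C+\log_2 M+1$ and so lie within $C+\log_2 M+1+b$ of $H$. Stringing (i)--(iii) together exactly as you propose then yields quasi-convexity of $H$ in $\Gamma(G,S)$. So: right route, wrong key estimate --- replace ``horoballs over finite graphs are bounded'' by ``geodesics between depth-zero points have uniformly bounded penetration depth.''
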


\section{Proof of Theorem \ref{t:technical}}

\subsection{Projections of geodesics to cusped spaces of quotients}
The key technical lemma is the following:
\begin{lemma}\label{l:shortcutfailure}
Fix a relatively hyperbolic group $G$ with peripheral subgroups
$\mc{P}$ and compatible generating set $S$.  
Choose $\delta>0$ so that
the cusped spaces $X=X(G,\mc{P},S)$ and
$X'=X(G(N_1,\ldots,N_m),\mc{Q},S)$ are both $\delta$-hyperbolic,
whenever $G(N_1,\ldots,N_m)$ is a sufficiently long filling of $G$.

Let $L\geq 10\delta$, and let $D\geq 3 L$.
Let $F=\{g\in G\mid d_X(g,1)\leq 2D\}$, and
let $G\stackrel{\pi}{\longrightarrow}G(N_1,\ldots,N_m)$ be any
hyperbolic filling of $G$ which is injective on $F$ and so that
$X'=X(G(N_1,\ldots,N_m),\mc{Q},S)$ is $\delta$-hyperbolic.
(We denote the induced map from $X$ to $X'$ also by $\pi$.)
Let $\gamma$ be a regular
geodesic in $X$ joining two elements of $G$.  
One of the following occurs:
\begin{enumerate}
\item\label{shortcuttable} There is a $10\delta$-local geodesic with the same endpoints as
  $\pi(\gamma)$ which is contained in a $2$-neighborhood of
  $\pi(\gamma)$ and coincides with $\pi(\gamma)$ everywhere in an
  $L$-neighborhood of the Cayley graph of $G(N_1,\ldots,N_m)$.
\item\label{notshortcuttable} There is a coset $tP_i$ whose corresponding horoball intersects
  $\gamma$ in a sub-segment $[g_1,g_2]$ of length at least $2D-20\delta-4$
  but there is some $n\in N_i$ with $d_X(g_1,g_2n)\leq 2L+3$.
\end{enumerate}
\end{lemma}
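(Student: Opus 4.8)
The plan is to track what happens to the regular geodesic $\gamma$ under the filling map $\pi$, handling the horoballs of $X$ one at a time. Since $\gamma$ is regular, its intersection with each horoball of $X$ is either short or consists of two vertical segments joined by a horizontal segment of length at most three; outside the horoballs $\gamma$ runs in the Cayley graph of $G$. The key point is that $\pi$ maps each horoball $\mc{H}(\Gamma_{i,t})$ of $X$ to the corresponding horoball of $X'$ based on the coset $\pi(t)\cdot\iota_i(P_i/N_i)$, collapsing the $N_i$-direction. So a vertical segment of $\gamma$ in a horoball of $X$ maps to a vertical segment in $X'$ of the same length, while the horizontal segment at the bottom maps to a path that may be shorter (because $\pi$ identifies points differing by an element of $N_i$). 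The first thing I would do is make precise, using Definitions \ref{d:combhoro} and \ref{d:X1} together with Theorem \ref{t:rhds}\eqref{b:injective}, how $\pi$ acts on each horoball and on the Cayley graph portions of $\gamma$.

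Next I would examine each maximal sub-segment $\sigma$ of $\gamma$ that enters deep into a single horoball $A$ over a coset $tP_i$. Write $\sigma$ as a descending vertical segment from $g_1$, a horizontal segment of length at most three at the deepest level, and an ascending vertical segment to $g_2$. In $X'$ the images of the two endpoints $g_1,g_2$ of $\sigma$ are connected; the question is whether the ``obvious'' image path $\pi(\sigma)$ is close (within $2$) to an honest $10\delta$-local geodesic in $X'$, and in particular whether near the Cayley graph of $G(N_1,\ldots,N_m)$ it cannot be shortcut. The dichotomy is the following: either for every such deep excursion $\sigma$ the element $g_1^{-1}g_2$ (which lies in a conjugate of $P_i$, since $\gamma$ is regular) is \emph{not} close in $X$ to the $N_i$-coset it would need to be in for a shortcut — in which case $\pi(\sigma)$ (repaired by replacing each of the two vertical legs with a genuine vertical segment and joining by a short horizontal segment) is already essentially geodesic and agrees with $\pi(\gamma)$ near the Cayley graph, giving alternative \eqref{shortcuttable} — or there is some excursion $\sigma$ over $tP_i$ for which $g_1^{-1}g_2$ \emph{is} within the relevant distance of $N_i$, which after translating by $t^{-1}$ and using the triangle inequality in $X$ gives an $n\in N_i$ with $d_X(g_1,g_2 n)\le 2L+3$, i.e. alternative \eqref{notshortcuttable}. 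I would then check that in the second case the excursion is automatically long: if $d_X(g_1,g_2 n)\le 2L+3$ but $\gamma$ is a geodesic in $X$, the depth to which $\gamma$ descends in $A$ is bounded below, forcing the horizontal crossing-segment of $\gamma$ to be at distance at least $2D-20\delta-4$ — here one uses $D\ge 3L$ and the fact that a vertical round trip of depth $d$ in a combinatorial horoball realizes distance roughly $2d$ between its endpoints unless they are within $2^d$ of each other in $\Gamma$, so that $g_1,g_2$ being within $2L+3$ in $X$ and $\gamma$ geodesic forces $\gamma$ to descend to depth at least about $D-10\delta$ in $A$, which in turn forces its horizontal sub-segment in $A$ to be long.

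The main obstacle, I expect, is the bookkeeping in the first alternative: verifying that when \emph{no} excursion admits a shortcut, the path obtained from $\pi(\gamma)$ by locally repairing each image excursion into a genuine two-vertical-plus-short-horizontal path is a $10\delta$-local geodesic in $X'$, lies within a $2$-neighborhood of $\pi(\gamma)$, and — crucially — coincides with $\pi(\gamma)$ everywhere within an $L$-neighborhood of the Cayley graph of $G(N_1,\ldots,N_m)$. The coincidence-near-the-Cayley-graph claim is where the constant $L$ and the hypothesis $D\ge 3L$ enter: the repairs only happen deep (below depth roughly $L$) in horoballs, because above that depth the ``no shortcut'' hypothesis together with $\delta$-hyperbolicity of $X'$ forces $\pi(\gamma)$ to already be locally geodesic there. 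I would also need the injectivity of $\pi$ on $F = \{g : d_X(g,1)\le 2D\}$ to rule out spurious identifications along the Cayley-graph parts and the shallow parts of the excursions, so that the only way $\pi$ can create a shortcut is through the $N_i$-direction in a single horoball, which is exactly the content of alternative \eqref{notshortcuttable}. Throughout, $\delta$-hyperbolicity of both $X$ and $X'$ (guaranteed for sufficiently long fillings by Proposition \ref{p:uniformhyp}) is used via the standard stability estimates of \cite[III.H.1.13]{bridhaef:book} to pass between geodesics and local geodesics.
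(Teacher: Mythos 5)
Your proposal follows essentially the same route as the paper's proof: injectivity of the filling map on the shallow part of $X$ (depth at most roughly $D-10\delta$) reduces everything to the deep horoball excursions of $\gamma$, and for each such excursion one either truncates the image path at the depth (necessarily $>L$, because the images of $g_1,g_2$ remain at distance $\ge 2L+3$ in $X'$) where the two vertical legs first come within horizontal distance $3$ of each other, or else extracts some $n\in N_i$ with $d_X(g_1,g_2n)\le 2L+3$. One small inversion to fix: the length bound $2D-20\delta-4$ in conclusion (2) follows simply from the excursion being deep and $\gamma$ being geodesic (a geodesic reaching depth $d$ in a horoball has length $\ge 2d$ there), not, as you suggest, from the shortcut condition forcing $\gamma$ to descend far.
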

\begin{proof}
It is straightforward to verify that the map $\pi\co X\to X'$ induced
by the filling is injective on balls 
of radius $10\delta$ centered on points either in the Cayley graph or 
at depth at most $D-10\delta-2$ in a horoball.  It follows that
sub-segments of $\gamma$ straying no further than $D-10\delta-2$ from the Cayley
graph project to $10\delta$-local geodesics.  Let $B$ be a
horoball which $\gamma$ penetrates to depth greater than $D-10\delta-2$.  
The
horoball $B$ intersects the Cayley graph of $G$ in some coset $tP$
for $t\in G$ and $P_i\in \mc{P}$.  Let $g_1$ and $g_2$ be the initial
and terminal vertices of $\gamma\cap B$.  There are three
possibilities:
\begin{enumerate}[(a)]
\item\label{geodesic} $\pi(\gamma\cap B)$ is geodesic,
\item\label{shortcut} $\pi(\gamma\cap B)$ is not geodesic but
$d_{X'}(\pi(g_1),\pi(g_2))\geq 2L+3$,
\item\label{contra} $\pi(\gamma\cap B)$ is not geodesic and
$d_{X'}(\pi(g_1),\pi(g_2))< 2L+3$.
\end{enumerate}
We first claim that if \eqref{geodesic} or \eqref{shortcut} holds for
every horoball which $\gamma$ penetrates to depth
greater than $D-10\delta-2$, then 
conclusion \eqref{shortcuttable} of the Lemma holds.  We argue by
constructing
a new $10\delta$-quasi-geodesic 
$\gamma'$ in $X'$ which agrees with $\pi(\gamma)$
everywhere in an $L$-neighborhood of the Cayley graph of
$G(N_1,\ldots,N_m)$ and inside those horoballs of $X'$ which
$\pi(\gamma)$ intersects in a geodesic segment.  Whenever a sub-segment
$\sigma$
of $\pi(\gamma)$ is of type \eqref{shortcut}, we can replace it by a
shorter, but still $10\delta$-local geodesic segment as follows.
The segment $\sigma$ is composed of two vertical sub-segments and a
short (length $2$ or $3$) horizontal sub-segment at depth $d > D-10\delta-2$.
Since $\pi(\sigma)$ is not geodesic, the images in $X'$ of the vertical
sub-segments must come within a horizontal distance of $3$ of one
another at some smaller depth $d'$.
The assumption that $d_{X'}(\pi(g_1),\pi(g_2))\geq 2L+3$
forces $d'>L$.  Modifying $\sigma$ by removing the part lying below depth
$d'$ and replacing it with a horizontal geodesic leaves a geodesic
$\sigma'$ which still goes to depth at least $L$.  Therefore
making all possible such modifications leaves a $10\delta$-local
geodesic $\gamma'$ satisfying conclusion \eqref{shortcuttable} of the
Lemma.

Now suppose that there is some horoball $B$ so that $\gamma$
penetrates $B$ to depth greater than $D-10\delta-2$, but $\pi(\gamma\cap B)$
satisfies condition \eqref{contra} above.  The image of $P_i$ in
$G(N_1,\ldots,N_m)$ is canonically isomorphic to $P_i/N_i$, so there is
some $n\in N_i$ so that 
$d_X(g_1,g_2n)\leq 2 L+3$.
Since $\gamma$ is geodesic, $d_X(g_1,g_2) > 2(D-10\delta-2)$, and so conclusion
\eqref{notshortcuttable} holds.
\end{proof}

In our current applications, we will only ever apply this lemma to a
geodesic with both endpoints in a quasi-convex subgroup.
In this context, more can be said.
\begin{lemma} \label{l:technical2}
Let $G$, $\mc{P}$, $S$, and $L\geq 10\delta$
be as in the
hypothesis of Lemma \ref{l:shortcutfailure}.  Let $H$ be a
$\lambda$-relatively quasi-convex subgroup of $G$, and let $\alpha$
be the lipschitz constant for the extension of the inclusion map in
Lemma \ref{l:check}.
 
Let $D\geq 3L+100\lambda+4\alpha$, and let $F=\{g\in G\mid d_X(g,1)\leq 2D\}$.
Suppose
$G\stackrel{\pi}{\longrightarrow}G(N_1,\ldots,N_m)$ is an $H$-filling
which is injective on $F$ and so that
$X'=X(G(N_1,\ldots,N_m),\mc{Q},S)$ is $\delta$-hyperbolic.  Let
$K_H<\ker(\pi)\cap H$
be the kernel of the induced filling of $H$.
Finally, suppose that $\gamma$ is a geodesic joining $1$ to $h$ for
some $h\in H$. 

If conclusion \eqref{notshortcuttable} of Lemma
\ref{l:shortcutfailure} holds, then there is some $k\in K_H$
satisfying $|kh|_X<|h|_X$.\footnote{(writing $|\cdot|_X$ for $d_X(\cdot,1)$)}
\end{lemma}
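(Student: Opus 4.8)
\emph{Strategy.} The plan is to extract from conclusion \eqref{notshortcuttable} a horoball $A$ into which $\gamma$ travels very deeply, to identify $A$ as the $\check\iota$-image of a horoball of $X(H,\mc D,T)$ using relative quasi-convexity of $H$, to use the $H$-filling hypothesis to upgrade the shortcutting element $n\in N_i$ to an element $k\in K_H$, and finally to exhibit an explicit path from $1$ to $kh$ that is strictly shorter than $\gamma$. To set up: conclusion \eqref{notshortcuttable} gives a coset $tP_i$ whose horoball $A$ meets $\gamma$ in a subsegment $[g_1,g_2]$ with $d_X(g_1,g_2)\ge 2D-20\delta-4$, together with $n\in N_i$ satisfying $d_X(g_1,g_2n)\le 2L+3$. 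As $\gamma$ runs between the depth-$0$ points $1$ and $h$, the vertices $g_1,g_2$ lie in $tP_i\subseteq G$; moreover (this being how case \eqref{notshortcuttable} arises in the proof of Lemma \ref{l:shortcutfailure}) $\gamma$ penetrates $A$ to depth greater than $D-10\delta-2$. Let $p$ be its deepest vertex inside $A$.

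\emph{The horoball $A$ is visible from $H$.} Write $Y=\check\iota(X(H,\mc D,T))$. Since $H$ is $\lambda$-relatively quasi-convex and $1,h\in Y$, the geodesic $\gamma$ lies within $\lambda$ of $Y$, so $p$ is within $\lambda$ of some vertex of $Y$. The depth function on $X$ (distance to the Cayley graph of $G$) is $1$-Lipschitz, and any edge-path leaving a horoball must pass through one of that horoball's depth-$0$ vertices; hence, if $A$ were \emph{not} the $\check\iota$-image of a horoball of $X(H,\mc D,T)$, then every vertex of $Y$ would lie at distance at least $\mathrm{depth}(p)>D-10\delta-2$ from $p$, contradicting the choice $D\ge 3L+100\lambda+4\alpha$ with $L\ge10\delta$ and $\alpha\ge1$, which forces $D-10\delta-2>\lambda$. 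Therefore $A$ is the $\check\iota$-image of a horoball of $X(H,\mc D,T)$, so by the description of $\check\iota$ in Lemma \ref{l:check} the base coset of $A$ is $sc_jP_{\nu(j)}$ for some $s\in H$ and $D_j\in\mc D$, where $c_j\in G$ and $P_{\nu(j)}\in\mc P$ are chosen as in Lemma \ref{l:check} and Definition \ref{d:inducedfilling}. Equating cosets $tP_i=sc_jP_{\nu(j)}$ and using that the elements of $\mc P$ are pairwise non-conjugate and self-normalizing in $G$, we get $P_i=P_{\nu(j)}$ and $t\in sc_jP_i$; hence $tP_it^{-1}\supseteq sD_js^{-1}$, an infinite subgroup contained in $H$, so $H\cap tP_it^{-1}\ne\{1\}$.

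\emph{Promoting $n$ to $K_H$.} Write $g_2=tp_2$ with $p_2\in P_i$ and set $k=g_2ng_2^{-1}=t\tilde n t^{-1}$, where $\tilde n=p_2np_2^{-1}\in N_i$; note $kg_2=g_2n$. Since $\pi$ is an $H$-filling and $H\cap tP_it^{-1}\ne\{1\}$, there are $s'\in H$ and $D_{j'}\in\mc D$ with $tN_it^{-1}\subseteq s'D_{j'}s'^{-1}\subseteq H$, so $k\in H$ and $s'^{-1}ks'\in D_{j'}$. The subgroup $tN_it^{-1}$ is infinite ($N_i\ne\{1\}$ and $G$ is torsion-free) and is contained in both $tP_it^{-1}$ and $s'c_{j'}P_{\nu(j')}c_{j'}^{-1}s'^{-1}$; since distinct conjugates of elements of $\mc P$ intersect finitely, these two parabolics coincide, forcing $P_i=P_{\nu(j')}$ (so $N_{\nu(j')}=N_i$) and $t\in s'c_{j'}P_i$. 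Writing $t=s'c_{j'}p_0'$ with $p_0'\in P_i$ gives $s'^{-1}ks'=c_{j'}\hat n c_{j'}^{-1}$ with $\hat n=p_0'\tilde n(p_0')^{-1}\in N_i$, so $s'^{-1}ks'\in c_{j'}N_{\nu(j')}c_{j'}^{-1}$; combined with $s'^{-1}ks'\in D_{j'}$ this yields $s'^{-1}ks'\in c_{j'}N_{\nu(j')}c_{j'}^{-1}\cap D_{j'}=K_{j'}$, whence $k\in s'K_{j'}s'^{-1}\subseteq K_H$.

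\emph{The shorter path, and the hard part.} As $kg_2=g_2n$, the concatenation of $\gamma|_{[1,g_1]}$, a geodesic from $g_1$ to $g_2n$ of length $\le 2L+3$, and the translate $k\cdot\gamma|_{[g_2,h]}$ is a path from $1$ to $kh$ of length at most $|g_1|_X+(2L+3)+d_X(g_2,h)$. Since $|h|_X=|g_1|_X+d_X(g_1,g_2)+d_X(g_2,h)\ge |g_1|_X+(2D-20\delta-4)+d_X(g_2,h)$, we get $|h|_X-|kh|_X\ge (2D-20\delta-4)-(2L+3)>0$ by the choice $D\ge 3L+100\lambda+4\alpha$ (using $L\ge 10\delta$ and $\alpha\ge1$), so $|kh|_X<|h|_X$. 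I expect the third step to be the main obstacle: the element $n\in N_i$ produced by Lemma \ref{l:shortcutfailure} is a datum about the filling of $G$, and recognizing the corresponding $k$ as an element of $K_H$ itself — rather than merely of $\ker(\pi)\cap H$ — requires carefully combining the geometric input of the second step with the exact conjugacy bookkeeping of the induced filling in Definition \ref{d:inducedfilling} and the almost malnormality of $\mc P$, which is just the sort of conjugation argument where a slip is easy.
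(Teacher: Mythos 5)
Your proof is correct and follows the paper's overall strategy (show $H$ meets the stabilizer of the horoball nontrivially, use the $H$-filling condition to place $k=g_2ng_2^{-1}$ in $K_H$, then shortcut the geodesic), but both key sub-steps are executed differently, and in each case your version is at least as clean. For the stabilizer step the paper does not use your ``deepest point'' argument: it instead finds points $h_1,h_2\in H$ within $4\lambda+\alpha$ of $g_1,g_2$ (via the vertices $(g_j,\lambda+1)$ and relative quasi-convexity), deduces $h_1\neq h_2$ from the length of $[g_1,g_2]$, and uses $H$-equivariance of $\check\iota$ to conclude that $h_2h_1^{-1}$ stabilizes the horoball; your route pins down the base coset of $A$ as $sc_jP_{\nu(j)}$ more directly and is equally valid (one wording quibble: $A$ is not literally the $\check\iota$-image of an $H$-horoball, it merely \emph{contains} such an image, which is all you use). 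For the length estimate, the paper factors $h=h_1(h_1^{-1}h_2)(h_2^{-1}h)$ and $kh=h_1(h_1^{-1}kh_2)(h_2^{-1}h)$ and must track $16\lambda+4\alpha$ of slack coming from approximating $g_1,g_2$ by $h_1,h_2$; your direct concatenation $\gamma|_{[1,g_1]}\ast[g_1,g_2n]\ast k\cdot\gamma|_{[g_2,h]}$ avoids that slack entirely and closes the estimate with room to spare. The step you flagged as the likely obstacle --- recognizing $k\in K_H$ rather than merely $k\in\ker(\pi)\cap H$ --- is exactly the paper's argument; you have simply spelled out the coset bookkeeping ($t\in s'c_{j'}P_i$, hence $tN_it^{-1}=s'K_{j'}s'^{-1}$) that the paper compresses into one sentence. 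Your appeals to torsion-freeness of $G$ and almost malnormality of $\mc{P}$ to force $P_{\nu(j')}=P_i$ are not literally among the lemma's stated hypotheses, but they hold in every application and the paper's terser version of the same identification tacitly relies on the same facts.
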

\begin{proof}
Let $g_1,g_2\in tP_i$ and $n\in N_i$ be as
in the conclusion to Lemma \ref{l:shortcutfailure},  and let $B$ be the
horoball in $X$ which contains $tP_i$.
We have
\begin{eqnarray}
\label{deep} d_X(g_1,g_2)& \geq & 2D-20\delta-4 \ge 
6L+200\lambda+8\alpha-20\delta-4\mbox{, but}\\
\label{shallow} d_X(g_1,g_2n) & < & 2L+3.
\end{eqnarray}

We use quasi-convexity to approximate $g_1$ and $g_2$ by elements of
$Hc\cap tP_i$ for some small $c\in G$.  If $g\in tP_i$ and $m\in \N$, we will write $(g,m)$ for the
unique vertex of $B$ connected to $g$ by a vertical geodesic of length
$m$, as in Remark \ref{r:orderedpair}.

By \eqref{deep}, the geodesic $\gamma$ penetrates the
horoball $B$ to depth at least
$D-10\delta - 4 > 2\lambda$; 
in particular, $\gamma$ passes through
$(g_1,\lambda+1)$ and $(g_2,\lambda+1)$. Let $\iota\co H\to G$ be the
inclusion map, and $\check\iota$ the extension from Lemma \ref{l:check}.
Since $H$ is
$\lambda$-relatively quasi-convex, there are points 
$z_1=\check\iota\left((s_1 D_{j_1},h_1,n_1)\right)$ and
$z_2=\check\iota\left((s_2 D_{j_2},h_2,n_2)\right)$ in $B$ within $\lambda$ of
$(g_1,\lambda+1)$ and $(g_2,\lambda+1)$, respectively.  
Note that $d_X(z_j,h_j)\leq 2\lambda+\alpha$  and thus
$d_X(h_j,g_j)\leq 4\lambda+\alpha$ for $j=1,2$.
Thus
\begin{equation}\label{hjgj}
 d_X(h_1,h_2)\geq d_X(g_1,g_2) -( 8\lambda+2\alpha) > 0.
\end{equation}
In particular $h_1\neq h_2$.
Moreover, since
$\check\iota$ is $H$-equivariant, $h_2h_1^{-1} B$ intersects $B$ in
its interior.  Hence $h_2h_1^{-1}$ fixes the horoball $B$ and the coset
$tP_i$.  
Because the filling kernels
$\{N_1,\ldots,N_m\}$ are assumed to determine an $H$-filling, we have
\[ tN_it^{-1}\subseteq sD_l s^{-1}\subseteq H\]
for some $s\in H$ and some $D_l\in \mc{D}$.

For $j\in \{1,2\}$ we have $g_j = t p_j$ for $p_j\in P_i$.
As $N_i\lhd P_i$, 
$g_2n = tp_2 n = t n' p_2$ for some $n'\in N_i$.  
Let $k = tn't^{-1}$, so that $g_2n = kg_2$.

We claim that $k\in K_H$.  Indeed, $K_l = c N_i c^{-1}$ is the induced
filling kernel in $D_l$, if $c = c(D_l)$.  Moreover $tN_it^{-1} =
scN_ic^{-1}s^{-1}=sK_ls^{-1}$ lies in $K_H$.  In particular $k\in
sK_ls^{-1}\subset K_H$.

Let $h' = kh$.  It remains to show
that $|h'|_X<|h|_X$.

Note first that $d_X(h_2',g_2 n)=d_X(h_2,g_2)$.
It follows that
\begin{equation}\label{h1h2n}
|h_1^{-1}h_2'|_X < 8\lambda+2\alpha+2L+3.
\end{equation}
Clearly $h = h_1 (h_1^{-1}h_2)(h_2^{-1}h)$.
Furthermore, each of
$h_1$ and $h_2$ lies no more than $4\lambda+\alpha$ from a geodesic connecting
$1$ to $h$.  Thus
\begin{equation} \label{exceed}
|h_1|_X+|h_1^{-1}h_2|_X+|h_2^{-1}h|_X \le |h|_X + 16\lambda+4\alpha.
\end{equation}

We can factorize $h'$ as $h' = h_1 (h_1^{-1}h_2') (h_2^{-1}h)$.  
It follows that
\begin{eqnarray}
\nonumber |h'|_X &\le& |h_1|_X + |h_1^{-1}h_2'|_X + |h_2^{-1}h| \\
\label{diff}&< &  |h|_X+16\lambda+4\alpha - (|h_1^{-1}h_2|_X-|h_1^{-1}h_2'|_X),
\end{eqnarray}
by \eqref{exceed}.

Inequalities \eqref{hjgj}, \eqref{h1h2n}, and \eqref{deep} imply that
\begin{eqnarray*}
|h_1^{-1}h_2|_X-|h_1^{-1}h_2'|_X & > &
|g_1^{-1}g_2|_X - (8\lambda+2\alpha) - (8\lambda+2\alpha+2L+3)\\
& = & |g_1^{-1}g_2|_X-(16\lambda+4\alpha+2L+3)\\ 
& \geq & 200\lambda+2L + 20\delta-4 -(16\lambda+4\alpha+2L+3)\\
& > & 16\lambda.
\end{eqnarray*}
Applied to \eqref{diff},
this yields $|h'|_X< |h|_X$, as required.
\end{proof}

\subsection{The image of $H$ is quasi-convex}

\begin{proposition}\label{p:fillqc}
Let $G$ be a relatively hyperbolic group and $H$ a subgroup
as in Definition \ref{d:inducedfilling}, and suppose that $H$ is
$\lambda$-relatively quasi-convex in $G$.
For all sufficiently
long $H$-fillings $G(N_1,\ldots,N_m)$ of $G$,
the image in
$G(N_1,\ldots,N_m)$ of the induced filling
$H(K_1,\ldots,K_n)$ is $\lambda'$-relatively quasi-convex, for some
$\lambda'$ independent of the filling.
\end{proposition}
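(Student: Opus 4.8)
The plan is to work in the cusped spaces $X = X(G,\mc{P},S)$ and $X' = X(G(N_1,\ldots,N_m),\mc{Q},S)$, together with the cusped spaces of $H$ and of the induced filling $H(K_1,\ldots,K_n)$, and to show that a geodesic in $X'$ joining two points in the image of (the extension of) $\bar\iota$ stays uniformly close to that image. Fix $\delta$ using Proposition \ref{p:uniformhyp} so that $X$ and $X'$ are both $\delta$-hyperbolic for all sufficiently long fillings, and fix $L = 10\delta$ and $D \geq 3L + 100\lambda + 4\alpha$ as in Lemma \ref{l:technical2}, where $\alpha$ is the lipschitz constant of $\check\iota$ from Lemma \ref{l:check}. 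Now take the filling long enough that $\pi$ is injective on the ball $F = \{g \in G \mid d_X(g,1) \leq 2D\}$ (possible by Theorem \ref{t:rhds}(\ref{b:inject})). The first reduction is to note that $\check\iota\co X_H \to X$ has $\lambda$-quasi-convex image $Y$, that the induced map $\bar\iota$ on the quotient groups respects the peripheral structure, so it extends to a lipschitz map $\check{\bar\iota}\co X'_H \to X'$ whose image $Y'$ satisfies $Y' \subseteq \pi(Y)$ up to bounded error — more precisely, $\pi \circ \check\iota$ and $\check{\bar\iota}\circ \pi_H$ agree on vertices of $X_H$ that are not deep in a horoball, and inside horoballs they differ only by vertical position, which is preserved. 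So it suffices to show that a geodesic $\eta'$ in $X'$ between two vertices of $\pi(Y)$ lies uniformly close to $\pi(Y)$.

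The key step is a descent/replacement argument analogous to the proof of Proposition \ref{p:induced}, but pushed through the filling using Lemma \ref{l:shortcutfailure} and Lemma \ref{l:technical2}. Given $\bar h_1, \bar h_2$ in the image of $\bar\iota$, lift to $h_1, h_2 \in H$ and write $\bar h = \bar h_1^{-1}\bar h_2$; choose a lift $h$ with $|h|_X$ minimal among $\ker(\pi)\cap H$-translates — here is where Lemma \ref{l:technical2} is used: if we pick $h$ of minimal $X$-length in its $K_H$-coset, then conclusion \eqref{notshortcuttable} of Lemma \ref{l:shortcutfailure} cannot hold for a regular geodesic $\gamma$ from $1$ to $h$ (it would produce a shorter $K_H$-translate). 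Hence conclusion \eqref{shortcuttable} holds: there is a $10\delta$-local geodesic $\gamma'$ in $X'$ from $1$ to $\pi(h) = \bar h$, contained in a $2$-neighborhood of $\pi(\gamma)$, agreeing with $\pi(\gamma)$ within an $L$-neighborhood of the Cayley graph of $G(N_1,\ldots,N_m)$, and coinciding with $\pi(\gamma)$ inside the horoballs $\pi(\gamma)$ crosses geodesically. Since $\gamma$ is a regular geodesic between elements of $H$ and $H$ is $\lambda$-relatively quasi-convex, $\gamma$ lies within $\lambda$ of $Y$; in the Cayley-graph part, $\pi$ is a uniform quasi-isometry onto a neighborhood that contains $\pi(\gamma)$ up to bounded error, so the shallow part of $\gamma'$ lies within a bounded constant of $\pi(Y)$. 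Inside any horoball $B'$ of $X'$ that $\gamma'$ enters below depth $L$: such $B'$ is the image of a horoball $B$ which $\gamma$ penetrated deeply and geodesically after $\pi$, so $\gamma'$ agrees with $\pi(\gamma)$ there, and $\gamma\cap B$ is vertical-plus-short-horizontal with its vertical segments starting at points within $\lambda$ of $Y$; these points lie directly above elements of $H$ (Observation \ref{o:noaccidents}), and the stabilizer containing them is a conjugate of some $D_\ell$, so by the argument of Lemma \ref{l:allornothing} the whole horoball $B$ — hence $B'$, since $\bar\iota$ is equivariant and preserves vertical position — lies within a bounded neighborhood of $Y'$. Finally a $10\delta$-local geodesic $\gamma'$ in the $\delta$-hyperbolic $X'$ lies within $2\delta$ of the true geodesic $\eta'$ (by \cite[III.H.1.13(1)]{bridhaef:book}), which gives the desired uniform bound $\lambda'$.

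The remaining cases are the routine endpoint-cleanup analogues of Cases 1 and 3 of Proposition \ref{p:induced}: when $\bar h_1$ or $\bar h_2$ (or both) lie deep inside a horoball of $X'$ we first slide down/up to depth $0$ along vertical geodesics (which stay in $Y'$ by construction of cusped spaces and equivariance of $\check{\bar\iota}$), handle the remaining shallow portion by the main argument, and reassemble, noting a vertical-geodesic-plus-$10\delta$-local-geodesic concatenation is still a $10\delta$-local geodesic. The constant $\lambda'$ is assembled from $\delta$, $\lambda$, $\alpha$, the quasi-isometry constants relating the Cayley graph of $G(N_1,\ldots,N_m)$ to the shallow part of $X'$, $\beta$ from Lemma \ref{l:allornothing}, and $K$ — all of which are uniform over sufficiently long fillings by Proposition \ref{p:uniformhyp} and because $H$, $\mc{D}$, $\mc{P}$ are fixed.

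The main obstacle I anticipate is the bookkeeping around conclusion \eqref{shortcuttable} of Lemma \ref{l:shortcutfailure}: one must carefully confirm that, for a regular geodesic between points of $H$, \emph{every} deeply-penetrated horoball either is crossed geodesically after $\pi$ or contributes a genuine shortcut — and that the latter is impossible once $h$ is chosen of minimal length in its $K_H$-coset — and then that the resulting $10\delta$-local geodesic $\gamma'$ is close to $\pi(Y)$ \emph{both} in the shallow region (via the quasi-isometry between $G(N_1,\ldots,N_m)$ and the depth-$\leq L$ part of $X'$) \emph{and} in every horoball it enters. The horoball estimate in particular requires that "$H$-filling" guarantees $\check{\bar\iota}$'s image genuinely fills each such horoball of $X'$, which is exactly where the $H$-filling hypothesis $tN_it^{-1} \subseteq sD_\ell s^{-1} \subseteq H$ does its work, via Lemma \ref{l:allornothing} applied downstairs and then pushed through the equivariant, vertical-position-preserving map.
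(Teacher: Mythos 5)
Your proposal is correct and follows essentially the same route as the paper's proof: pick a lift $h$ of minimal $X$-length, use Lemma \ref{l:technical2} to rule out conclusion \eqref{notshortcuttable} of Lemma \ref{l:shortcutfailure}, obtain a $10\delta$-local geodesic within $2$ of $\pi(\gamma)$, apply local-geodesic (quasi-geodesic) stability, and then clean up endpoints lying in horoballs via vertical segments. The one place you overcomplicate is the horoball analysis via Lemma \ref{l:allornothing} (which is in any case only established for the induced peripheral structure of a quasi-convex subgroup of a hyperbolic group, not the general relatively hyperbolic setting of this proposition); it is unnecessary because $\pi\co X\to X'$ is distance non-increasing, so the fact that $\gamma$ lies in a $\lambda$-neighborhood of the image of $\check{\iota}$ immediately puts $\pi(\gamma)$ --- everywhere, including deep inside horoballs --- in a $\lambda$-neighborhood of the image of $\check{\phi}$.
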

\begin{proof}
We fix a (relative, compatible) generating set $S$ for $G$, and 
let $\delta$ be the uniform constant of hyperbolicity for cusped spaces
$X=X(G,\mc{P},S)$ and $X'=X(G(N_1,\ldots,N_m),\mc{Q},S)$ provided by
Proposition \ref{p:uniformhyp}.  It is useful to assume that both
$\delta$ and $\lambda$ are integers.

We will apply Lemmas \ref{l:shortcutfailure} and \ref{l:technical2} with
$L = 10\delta$ and $D = 100\lambda + 100\delta$.  
``Sufficiently long'' then means that the filling is injective on 
$F = \{ g \in G \mid d_X(g,1) \le 2D \}$ (and that $X'$ is $\delta$-hyperbolic).

By \cite[III.H.1.13]{bridhaef:book}, any $10\delta$-local geodesic in
$X'$ is a $(7/3,2\delta)$-quasi-geodesic.
Let $R$ be the constant of quasi-geodesic stability for
$(7/3,2\delta)$-quasi-geodesics in a $\delta$-hyperbolic space (see
\cite[III.H.1.7]{bridhaef:book}). 
We show that it is sufficient to take $\lambda'=\lambda+R+2\delta+2$.  

Let $\iota \co H \to G$ be inclusion and $\phi \co H \to G(N_1,\ldots, N_m)$ be $\pi \circ \iota$ where $\pi \co G \to G(N_1,\ldots,N_m)$
is the filling map.  Recall from Lemma \ref{l:check} that we have induced
maps $\check{\iota}$ and $\check{\phi}$ from the cusped space for
$H$ to $X$ and $X'$, respectively.

\begin{claim}\label{c:simpler}
Let $\bar{h}\in \pi(H)$.  Any geodesic in $X'$ joining $1$ to
$\bar{h}$ stays in an $(\lambda +R+2)$-neighborhood of the image of
$\check{\phi}$.
\end{claim}
\begin{proof}(Claim \ref{c:simpler})
We choose $h\in H$ of minimal $X$-length projecting to $\bar{h}$, and
let $\gamma$ be a regular geodesic joining $1$ to $h$ in $X$.

By Lemmas \ref{l:shortcutfailure} and \ref{l:technical2}, and the
minimality of $h$, there is a $10\delta$-local geodesic with endpoints
$1 = \pi(1)$ and $\bar{h} = \pi(h)$ which is contained in a 
$2$-neighborhood of $\pi(\gamma)$.  

Any geodesic from $1$ to $\pi(h)$
therefore lies in a $(R+2)$-neighborhood of $\pi(\gamma)$, by
quasi-geodesic stability. Since $\gamma$ lies in a
$\lambda$-neighborhood of the image of $\check{\iota}$, any
geodesic
from $1$ to $\pi(h)$ lies in a $(\lambda+R +2)$-neighborhood of the image
of $\check{\phi}$. 
\end{proof}

Claim \ref{c:simpler} suffices to prove the Proposition, as follows:
Let $X_H$ be the zero-skeleton of the cusped space of $H$, and let $Y=
\check{\phi}(X_H)$.  Let $x_1$, $x_2$ be elements of $Y$.  

If $x_1$ and $x_2$ lie in the same horoball one may use the convexity of
$\delta$-horoballs (\cite[Lemma 3.26]{rhds}) to see that any geodesic
joining them stays in a $(2\delta+2)$-neighborhood of $Y$.  

Suppose
therefore that $x_1$ and $x_2$ lie in different horoballs.  Each $x_i$
is connected by a vertical geodesic to some $h_ic$ for
$h_i\in \phi(H)$ and $|c|_X<\alpha$, where $\alpha$ is the lipschitz constant
from Lemma \ref{l:check}. Except for $h_ic$ itself, this
vertical geodesic contains only vertices of $Y$.  
The geodesic between
$h_1$ and $h_2$ is a $\phi(H)$-translate of one between $1$ and
$h_1^{-1}h_2$, and so this geodesic stays in a
$(\lambda+R+2)$-neighborhood of $Y$ by Claim \ref{c:simpler}.  The two vertical segments, the
geodesic between $h_1$ and $h_2$, and the geodesics from $h_1$ to
$h_1c$ and from $h_2$ to $h_2c$
 form five sides of a geodesic
hexagon, the sixth side of which can be taken to be any geodesic joining $x_1$ to
$x_2$.  This sixth side stays within a $4\delta$-neighborhood of the
other five, and therefore within $\lambda+R+2+4\delta+\alpha$ of $Y$.

The Proposition is proved, for $\lambda' = \lambda+R+2+4\delta+\alpha$.
\end{proof}

The following result is not required for the proof of Theorem \ref{t:technical}, 
but may be of independent interest.

\begin{proposition}\label{p:fillinject}
Let $H<G$ be a relatively quasi-convex subgroup.  For any sufficiently
large $H$-filling $G(N_1,\ldots,N_m)$ of $G$, the induced map from the
induced filling $H(K_1,\ldots,K_n)$ into $G(N_1,\ldots,N_m)$ is
injective.
\end{proposition}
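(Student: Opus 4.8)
The plan is to run an argument dual to the proof of Proposition~\ref{p:fillqc}: where that proposition used Lemmas~\ref{l:shortcutfailure} and~\ref{l:technical2} to follow the \emph{image} of $H$ under a filling, here I would use the same two lemmas to pin down the \emph{kernel}. First I would rephrase the assertion. Write $\pi\colon G\to G(N_1,\ldots,N_m)$ for the filling map, $K=\ker\pi$, and $K_H\lhd H$ for the normal closure in $H$ of $K_1\cup\cdots\cup K_n$, so that $H(K_1,\ldots,K_n)=H/K_H$ and the induced map $\bar\iota\colon H/K_H\to G/K$ is $hK_H\mapsto\pi(h)$. Since $K_i=\iota^{-1}(c_iN_{j_i}c_i^{-1})\cap D_i\subseteq c_iN_{j_i}c_i^{-1}\subseteq K$ and $K$ is normal in $G$, one has $K_H\subseteq H\cap K$; thus $\bar\iota$ is well defined and its injectivity is equivalent to the reverse inclusion $H\cap K\subseteq K_H$. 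I would then adopt exactly the set-up of Proposition~\ref{p:fillqc}: fix a compatible generating set $S$, let $\delta$, $X=X(G,\mc{P},S)$ and $X'=X(G(N_1,\ldots,N_m),\mc{Q},S)$ be as in Proposition~\ref{p:uniformhyp}, put $L=10\delta$, and take $D$ large enough (say $D\geq 3L+100\lambda+4\alpha$) that Lemmas~\ref{l:shortcutfailure} and~\ref{l:technical2} both apply; ``sufficiently large'' then means the filling is injective on $F=\{g\in G\mid d_X(g,1)\leq 2D\}$ and $X'$ is $\delta$-hyperbolic.

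Now fix $h\in H\cap K$; the goal is $h\in K_H$. I would choose $h_0$ of minimal $X$-length in the coset $hK_H\subseteq H$ and show $h_0=1$. Assume not, let $\gamma$ be a regular geodesic from $1$ to $h_0$ in $X$, and apply Lemma~\ref{l:shortcutfailure}. If conclusion~\eqref{notshortcuttable} holds, then --- since this is an $H$-filling and $h_0\in H$ --- Lemma~\ref{l:technical2} produces $k\in K_H$ with $|kh_0|_X<|h_0|_X$, and because $K_H\lhd H$ we have $kh_0\in h_0K_H=hK_H$, contradicting minimality of $h_0$. So conclusion~\eqref{shortcuttable} must hold; this is precisely the dichotomy already exploited in Claim~\ref{c:simpler}, and the easy case is finished. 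What remains --- and what I expect to be the only real work --- is to show that conclusion~\eqref{shortcuttable} also forces $h_0=1$.

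The crux is that $\pi(\gamma)$ runs from $\pi(1)=1$ to $\pi(h_0)=1$, using $h_0\in K$; hence the $10\delta$-local geodesic $\gamma'$ furnished by~\eqref{shortcuttable}, having the same endpoints, is a \emph{loop}. Since a $10\delta$-local geodesic is a $(7/3,2\delta)$-quasi-geodesic by \cite[III.H.1.13]{bridhaef:book}, a $10\delta$-local geodesic loop has length at most $14\delta/3$, so $\gamma'$ stays at depth at most $7\delta/3<L$. As $\gamma'$ coincides with $\pi(\gamma)$ throughout the $L$-neighborhood of the Cayley graph of $G(N_1,\ldots,N_m)$, this forces $\pi(\gamma)$ never to reach depth $L$: tracing $\pi(\gamma)$ from an endpoint, before it could reach depth $L$ it would pass through some depth strictly between $7\delta/3$ and $L$, at which point it would still lie in the $L$-neighborhood and hence on $\gamma'$, contradicting that $\gamma'$ never goes that deep. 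Since $\pi$ preserves depth, $\gamma$ itself stays within depth $7\delta/3$ of the Cayley graph, comfortably within $D-10\delta-2$; as noted at the start of the proof of Lemma~\ref{l:shortcutfailure}, $\pi$ is then injective on the $10\delta$-ball about every vertex of $\gamma$, so $\pi$ collapses no edge of $\gamma$, $|\pi(\gamma)|=|\gamma|$, and $\pi(\gamma)$ is itself a $10\delta$-local geodesic. Being a loop, $|\pi(\gamma)|\leq 14\delta/3$, whence $|h_0|_X=|\gamma|\leq 14\delta/3<2D$ and $h_0\in F$. Injectivity of $\pi$ on $F$ together with $\pi(h_0)=1=\pi(1)$ now gives $h_0=1$, the contradiction sought; hence $H\cap K=K_H$ and $\bar\iota$ is injective. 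The only delicate points I foresee are this short-loop estimate and checking that $\pi$ does not shorten $\gamma$ inside the shallow region, both of which seem robust.
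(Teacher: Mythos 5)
Your proposal is correct and follows essentially the same route as the paper: reduce to showing $H\cap\ker\pi\subseteq K_H$, take a shortest offending element, and use the dichotomy of Lemma \ref{l:shortcutfailure} together with Lemma \ref{l:technical2} to either shorten it by an element of $K_H$ or rule out the shortcuttable case. The only difference is cosmetic: where you re-derive by hand that a $10\delta$-local geodesic loop is trivial (via the $(7/3,2\delta)$-quasi-geodesic estimate and injectivity on $F$), the paper simply invokes the standard fact that a $\delta$-hyperbolic space contains no nontrivial $10\delta$-local geodesic loops.
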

\begin{proof}
As above, choose a compatible
generating set $S$ for $G$ with peripheral structure $\mc{P} =
\{P_1,\ldots,P_m\}$ and let $\delta$ be a constant of hyperbolicity
which suffices both for $X(G,\mc{P},S)$ and for the cusped space $X'$ 
of any sufficiently long hyperbolic filling of $G$.  Let $\lambda$ be the
constant of (relative) quasi-convexity for $H$.  

Once again, we will apply Lemmas \ref{l:shortcutfailure} and \ref{l:technical2} with
$L = 10\delta$ and $D = 100\lambda + 100\delta$.  
``Sufficiently long'' again means that the filling is injective on 
$F = \{ g \in G \mid d_X(g,1) \le 2D \}$ (and that $X'$ is $\delta$-hyperbolic).

Let $\pi \co G \to G(N_1,\ldots,N_m)$ be such a filling.
Let $h \in \ker(\pi) \cap H$ be nontrivial.  We must show that $h \in K_H$, the kernel of the
induced filling of $H$. Let $\gamma$ be a geodesic in $X$ from $1$ to $h$.  Note
that $\pi(\gamma)$ is a loop.  Suppose
that conclusion \eqref{shortcuttable} of Lemma \ref{l:shortcutfailure} holds.  Then there
is a $10\delta$-local geodesic loop based at $1$ in $X'$, which coincides with $\pi(\gamma)$ on an initial segment of length $L \ge 10\delta$.  This is impossible
since there are no nontrivial $10\delta$-local geodesic loops
in a $\delta$-hyperbolic space.  

Therefore we may apply Lemma \ref{l:technical2} to conclude that there is a $k \in K_H$ so that $|kh|_X < |h|_X$.  Induction on the length of $h$ shows that $h \in K_H$,
as required.
\end{proof}

\subsection{Keeping $g$ out of $H$}

\begin{proposition}\label{p:nocollide}
Let $H<G$ be a relatively quasi-convex subgroup, and let
$I>0$.  There is some $F=F(I)$ so that if $G(N_1,\ldots,N_m)$ is an 
$H$-filling of $G$ which is injective on $F$, and $g\in
G\smallsetminus H$ satisfies $|g|_X<I$, then $\pi(g)\notin \pi(H)$.
\end{proposition}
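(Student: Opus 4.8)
The plan is to show that for $g \in G \smallsetminus H$ with $|g|_X < I$, the failure of separation after filling — i.e. $\pi(g) \in \pi(H)$ — would force $g$ to lie in $H$ after all, provided the filling is injective on a suitably large finite set $F = F(I)$. The natural strategy is to suppose $\pi(g) = \pi(h)$ for some $h \in H$; then $gh^{-1} \in \ker(\pi)$, and we would like to conclude $gh^{-1} \in K$, where $K$ is the normal closure of the filling kernels, and then use control on $K$ to force $g \in H$. The key tool is Lemma \ref{l:technical2} together with the quasi-convexity machinery developed for Proposition \ref{p:fillqc} and Proposition \ref{p:fillinject}, applied to a geodesic from $1$ to $gh^{-1}$ (or from $g$ to $h$).

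First I would set up the constants exactly as in the proofs of Propositions \ref{p:fillqc} and \ref{p:fillinject}: fix a compatible generating set $S$, let $\delta$ be the uniform hyperbolicity constant from Proposition \ref{p:uniformhyp}, let $\lambda$ be the relative quasi-convexity constant of $H$, set $L = 10\delta$ and $D = 100\lambda + 100\delta$ (enlarging $D$ so that $D \geq 3L + 100\lambda + 4\alpha$ as Lemma \ref{l:technical2} requires, where $\alpha$ is the Lipschitz constant of $\check\iota$). Then I would define $F = F(I)$ to be the ball of radius $2D + I$ (or some similar explicit radius) about $1$ in $X$, large enough that (a) $g \in F$, (b) the filling is injective on $F$, and (c) the hypotheses of Lemmas \ref{l:shortcutfailure} and \ref{l:technical2} are satisfied for all geodesics joining elements of $H$ within reach of $g$. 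The point is that $|g|_X < I$ bounds how far $h$ can stray: if $\pi(g) = \pi(h)$ with $h$ of minimal $X$-length in its coset of $K_H$ (or of $\ker \pi$), then $h$ cannot be much longer than $g$, so $h \in F$ too, and $gh^{-1} \in F$ after a further enlargement of $F$.

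Next, the core argument: choose $h \in H$ with $\pi(h) = \pi(g)$ of minimal $X$-length; then $w := gh^{-1} \in \ker(\pi)$ and $|w|_X$ is controlled. I want to conclude $w \in K$ and in fact that $g \in H$. The cleanest route is probably to apply Lemma \ref{l:technical2}-style reasoning to the element $h$ directly: if $\pi(g) = \pi(h)$ but $g \notin H$, then since $\pi$ is injective on $F \ni g$, we get $h \notin F$ — but minimality of $h$ plus control over $|g|_X$ contradicts this, once $F$ is chosen large enough. More carefully, one shows that $\pi$ restricted to the finite set $H \cap F$ is injective onto its image and that $H \cap F$ already contains a representative of every element of $\pi(H)$ that can equal some $\pi(g)$ with $|g|_X < I$; this uses that the induced filling $H(K_1,\ldots,K_n)$ has slope lengths tending to infinity with the filling, so that $K_H \cap B_X(1,r) = \{1\}$ for any fixed $r$ once the filling is long enough, hence $\pi|_{H \cap B_X(1,r)}$ is injective.

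**The hard part will be** pinning down the right radius for $F$ and verifying that $h$ (the minimal-length preimage of $\pi(g)$ in $H$) genuinely lies in the region where injectivity of $\pi$ has been arranged — i.e. translating the bound $|g|_X < I$ into a bound $|h|_X < I'$. This requires knowing that $\pi$ does not shrink distances in $H$ too drastically, which is exactly what relative quasi-convexity of $\pi(H)$ (Proposition \ref{p:fillqc}) and injectivity of the induced filling (Proposition \ref{p:fillinject}) provide: for sufficiently long fillings, $\pi|_H$ is a quasi-isometric embedding with uniform constants on bounded sets, so a short element $\pi(g) = \pi(h)$ has a short preimage $h$. Once that translation is in hand, the finite-set argument closes: choose $F$ to contain $B_X(1, I')$, arrange the filling long enough that $\pi$ is injective on $F$ and that $K_H$ meets $B_X(1, I')$ trivially; then $g \notin H$ would give two distinct elements $g, h \in F$ with $\pi(g) = \pi(h)$, contradicting injectivity on $F$. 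This completes the proof.
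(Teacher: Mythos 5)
Your skeleton is right---take a minimal-length preimage $h\in H$ of $\pi(g)$, use Lemma \ref{l:technical2} to rule out conclusion \eqref{notshortcuttable} of Lemma \ref{l:shortcutfailure}, and aim to land everything inside a finite set on which $\pi$ is injective---and this matches the paper's strategy in outline. But the step you yourself flag as the hard part, converting $|g|_X<I$ into a bound $|h|_X<I'$, is where there is a genuine gap, and the justification you offer does not work. Propositions \ref{p:fillqc} and \ref{p:fillinject} give, respectively, relative quasi-convexity of the image of $H$ and injectivity of $H(K_1,\ldots,K_n)\to G(N_1,\ldots,N_m)$; neither supplies a lower bound on $d_{X'}(1,\pi(h))$ that grows with the length of the minimal representative $h$. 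The assertion that ``$\pi|_H$ is a quasi-isometric embedding with uniform constants on bounded sets'' is circular: to get uniform constants on the ball of radius $R$ you must already have arranged the filling to be injective on a set depending on $R$, and $R$ is exactly the quantity (the length of the minimal preimage) you are trying to bound.

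The paper closes this gap not by bounding $|h|_X$, but by making the parameters of Lemma \ref{l:shortcutfailure} depend on $I$: it takes $L=2I+10\delta$ and $D=100\lambda+100\delta+6I$, whereas your proposal keeps $L=10\delta$ and $D$ independent of $I$ and only enlarges $F$. With the $I$-dependent choice, if $h$ is minimal then conclusion \eqref{shortcuttable} must hold (else Lemma \ref{l:technical2} shortens $h$ within its $K_H$-coset), and the resulting $10\delta$-local geodesic from $1$ to $\pi(g)$ coincides with $\pi(\gamma)$ on initial and terminal segments of length $L$, hence has length at least $4I+20\delta$; since $10\delta$-local geodesics are $(7/3,2\delta)$-quasi-geodesics, this forces $d_{X'}(1,\pi(g))\ge\frac{3}{7}(4I+20\delta)-2\delta>I$, contradicting $|g|_X<I$ because $\pi$ does not increase distances. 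With $L$ fixed at $10\delta$ the same estimate only yields $d_{X'}(1,\pi(h))>\frac{46}{7}\delta$, which is useless once $I$ exceeds that constant. So the missing idea is precisely that the depth scale $L$ at which the shortcut analysis is performed must grow linearly with $I$; no enlargement of $F$ alone rescues the argument.
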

\begin{proof}
  As above, choose a compatible
generating set $S$ for $G$ with peripheral structure $\mc{P} =
\{P_1,\ldots,P_m\}$ and let $\delta$ be a constant of hyperbolicity
which suffices both for $X(G,\mc{P},S)$ and for the cusped space of
any sufficiently long hyperbolic filling of $G$.  Let $\lambda$ be the
constant of 
(relative) quasi-convexity for $H$.

As usual, we will apply Lemmas \ref{l:shortcutfailure} and \ref{l:technical2}.  This time, we will choose $L = 2 I +10\delta$ and $D = 100\lambda+100\delta+6I$.  
Let
\[F = \{g\in G\mid |g|_X\leq 2D\},\]
and let $G(N_1,\ldots , N_m)$ be an $H$-filling of $G$ which is injective
on $F$ and so that the associated cusped space $X'$ is $\delta$-hyperbolic.

If the proposition does not hold,
then there is some $g\in G\smallsetminus H$ so $|g|_X<I$, and some
$h\in H$ so that $\pi(g)=\pi(h)$.  Without loss of generality, we may
pick some such $h$ so $|h|_X$ is minimal.  Note that $|h|_X\geq
200\lambda+200\delta+12I$, by the injectivity hypothesis.
We let $\gamma$ be a
geodesic joining $1$ to $h$.  By Lemmas \ref{l:shortcutfailure} and
\ref{l:technical2}, and the minimality of $h$, conclusion \eqref{shortcuttable} of Lemma \ref{l:shortcutfailure} holds.

In this
case, there is a $10\delta$-local geodesic $\gamma'$ in $X'$ connecting $1$ to
$\pi(g)$ which lies in a $2$-neighborhood of $\pi(\gamma)$ and
coincides with $\pi(\gamma)$ in a $(2I+10\delta)$-neighborhood of both $1$
and $\pi(g)$.  It follows that $\gamma'$ has length at least
$4I+20\delta$.  But since $\gamma'$ is a $10\delta$-local
geodesic, it must be a $(7/3,2\delta)$-quasi-geodesic, and so
the distance in $X'$ between $1$ and $\pi(g)$ is at least
\[ \frac{3}{7}(4 I +20\delta)-2\delta > I.\]
It follows that $|g|_X>I$, a contradiction.
\end{proof}

\subsection{Height decreases under filling}\label{ss:heightdecrease}
This subsection is devoted to proving that given a relatively
quasi-convex subgroup $H$, its height decreases after any
sufficiently long $H$-filling.  Our method is the same as the one used
by the second and third authors
for the results in \cite[Part 2]{rhds}; as such it is inspired by
certain hyperbolic $3$-manifold arguments by Lackenby \cite{lackenby:whds} and
by the first author \cite{agol:dehnfilling}.
Briefly, we choose some minimal
counterexample to the theorem, and derive a contradiction by using
``area'' estimates coming from ``pleated surfaces''.

\begin{theorem}\label{t:heightdecrease}
Let $G$ be a torsion-free hyperbolic group, $H<G$ a quasi-convex subgroup of height
$k$, and let
$\mc{P}=\{P_1,\ldots,P_m\}$ be the peripheral structure on $G$ induced by
$H$.  There is a finite set $F\subset G$ so that 
if $G\stackrel{\pi}{\longrightarrow} G(N_1,\ldots,N_m)$ is a
hyperbolic $H$-filling satisfying:
\begin{enumerate}
\item $N_i\lhd P_i$ is finite index for all $i$, and
\item $N_i\cap F=\emptyset$ for all $i$,
\end{enumerate}
then $\pi(H)$ has height strictly less than $k$ in $G(N_1,\ldots,N_m)$.
\end{theorem}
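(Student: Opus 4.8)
The plan is to argue by contradiction in the style of \cite[Part 2]{rhds}: assuming $\pi(H)$ has height at least $k$ in $\bar G := G(N_1,\dots,N_m)$, I will build a ``pleated surface'' in the cusped space whose area is bounded above (by the linear isoperimetric inequality) and below (because it is forced deep into horoballs), and choose $F$ so that these estimates contradict one another. Fix once and for all a constant $\delta$ of hyperbolicity valid for $X=X(G,\mc P,S)$ and for the cusped space $X'$ of every sufficiently long hyperbolic filling (Proposition \ref{p:uniformhyp}), a relative quasi-convexity constant $\lambda$ for $H$ (Proposition \ref{p:induced}), and the constants appearing in Lemmas \ref{l:shortcutfailure}, \ref{l:technical2} and Propositions \ref{p:fillqc}, \ref{p:fillinject}, taken with $L=10\delta$ and $D=100\lambda+100\delta$. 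Let $F$ be a large ball about $1$ in $G$ (of radius a suitable multiple of $D$), enlarged to contain the finite sets produced by Theorem \ref{t:rhds} and by those lemmas. Then for any filling $\pi$ avoiding $F$ with each $N_i$ of finite index in $P_i$, Theorem \ref{t:rhds} applies, $\bar G$ is hyperbolic relative to the finite groups $\mc Q$ and hence hyperbolic (Lemma \ref{l:finiteparabolics}), and $\pi(H)$ is relatively quasi-convex, hence quasi-convex, in $\bar G$ (Proposition \ref{p:fillqc} and Lemma \ref{l:finiteparabolics}).

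So suppose $\pi(H)$ has height $\ge k$ in $\bar G$. Choose essentially distinct $\bar g_1=1,\bar g_2,\dots,\bar g_k\in\bar G$ with $\bar U:=\bigcap_{i=1}^k\pi(H)^{\bar g_i}$ infinite; since $\bar G$ is hyperbolic, $\bar U$ contains an element $\bar u$ of infinite order (if $\bar u$ were parabolic for $\mc Q$ it would be conjugate into a finite group, so $\bar u$ is loxodromic on $X'$). Lift each $\bar g_i$ to a shortest $g_i\in G$; since $\bar g_j^{-1}\bar g_i\notin\pi(H)$ forces $g_j^{-1}g_i\notin H$, these lifts are essentially distinct for $H$ in $G$. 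As $\bar u\in\pi(H)^{\bar g_1}=\pi(H)$, choose $u\in H$ with $\pi(u)=\bar u$ of minimal $X$-length among such lifts, and for each $i$ and each large $N$ choose a shortest lift $w_{i,N}\in H^{g_i}$ of $\bar u^N$ (possible since $\bar u^N\in\pi(H)^{\bar g_i}$), so $u^N$ and $w_{i,N}$ have the same image under $\pi$. For each large $N$ one then builds, using the \emph{preferred paths} of \cite{rhds}, a pleated surface $\Sigma_N$ in $X$: glue a regular geodesic $\gamma_N$ from $1$ to $u^N$ along the axis of $u$, the $k$ images under $\check\iota$ (Lemma \ref{l:check}) of cusped-space pieces carrying the $w_{i,N}$, and preferred-path ``connecting annuli'' between them, into a single $2$-complex mapping to $X$ with boundary length linear in $N$. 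The linear isoperimetric inequality bounds a minimal filling of this boundary linearly in $N$. On the other hand, because the $g_i$ are essentially distinct the $k$ pieces must diverge, forcing $\Sigma_N$ to penetrate some horoball to depth growing with $N$; since $\pi$ is injective on the large ball $F$ and is an $H$-filling, Lemmas \ref{l:shortcutfailure} and \ref{l:technical2} (with the minimality of $u$ and the $w_{i,N}$) show no such deep penetration can be ``shortcut'' except along one of the peripheral subgroups $P_j$ of $\mc P$. Comparing the two area estimates for $N$ large then either yields an outright contradiction, or forces every deep penetration to be peripheral, in which case $\gamma_N$ stays within a uniform constant $C$ (independent of $N$ and of $\pi$) of the coset $g_iH$ for every $i$.

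In the surviving case I finish as follows. Since a geodesic from $1$ to $u^N$ stays within $C$ of $g_iH$ for all $N$, and $1$ and $u^N$ are group elements, each $u^N$ lies in one of boundedly many right cosets of $H^{g_i}$, so by pigeonhole $u^{m}\in H^{g_i}$ for some $m\ge1$ bounded in terms of $C$; passing to a common multiple gives a single $m$ with $u^m\in\bigcap_{i=1}^k H^{g_i}$. This is an infinite intersection of $k$ essentially distinct conjugates of $H$, and is one of the finitely many intersections of at most $n=k$ such conjugates used to build the malnormal core $\mc D$, so its commensurator in $G$ is conjugate to some $P_j\in\mc P$, whence $u^m$ lies in a conjugate of $P_j$. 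Then $\bar u^m=\pi(u^m)$ lies in a conjugate of $\pi(P_j)\cong P_j/N_j$ (Theorem \ref{t:rhds}), which is finite since $N_j$ has finite index in $P_j$. Thus $\bar u$ has finite order, contradicting its choice, so $\pi(H)$ has height at most $k-1$.

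The main obstacle is the construction of $\Sigma_N$ and the lower area bound: one must use preferred paths so that the pieces coming from different conjugates are genuinely comparable, must quantify how essential-distinctness of the $g_i$ forces the pieces to diverge and hence $\Sigma_N$ to go deep into a horoball, and — most delicately — must arrange that the \emph{single} finite set $F$ makes the deep-penetration dichotomy resolve in the peripheral direction \emph{uniformly} over all admissible fillings (this is exactly the point where the $H$-filling hypothesis and the ``long slope'' condition $N_i\cap F=\emptyset$ are used, via Lemmas \ref{l:shortcutfailure} and \ref{l:technical2}). By comparison, the area bookkeeping and the passage from ``$\gamma_N$ stays near $g_iH$'' to ``$u^m\in H^{g_i}$'' are routine.
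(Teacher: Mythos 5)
Your overall strategy (contradiction via an area argument on a pleated surface in the cusped space, ending with the observation that an infinite intersection of $k$ essentially distinct conjugates of $H$ is conjugate into some $P_j$ and hence dies under a finite-index filling) is the right one, and your endgame matches the paper's. But the technical core of your argument has a genuine gap. The paper does not build a family of surfaces $\Sigma_N$ carrying powers $u^N$; it fixes a single infinite-order element $a$ in the intersection, writes $a=g_ib_ig_i^{-1}$, chooses lifts $\tilde a\in H$, $\tilde g_i$, $\tilde b_i\in H$ minimizing the \emph{$N$-area} of $\tilde a^{-1}\tilde g_i\tilde b_i\tilde g_i^{-1}$ (the number of conjugates of filling-kernel elements needed to express it in $G$), and proves this area is zero (Claim \ref{claim:zeroarea}). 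The relevant surface is a $p$-times punctured annulus whose punctures record exactly those $p$ relators; your $\Sigma_N$, as described, has boundary pieces mapping to \emph{different} elements of $G$ ($u^N$ versus $w_{i,N}$) and cannot close up in $X$ without such punctures, which you never introduce. More seriously, your proposed lower bound on area --- that essential distinctness of the $g_i$ forces the pieces to ``diverge'' and hence forces horoball penetration growing with $N$ --- is not a valid mechanism: the whole hypothesis is that these essentially distinct conjugates have \emph{infinite intersection}, so the cosets $g_iH$ fellow-travel along the axis of $u$ and nothing diverges. Lemmas \ref{l:shortcutfailure} and \ref{l:technical2} also do not deliver the dichotomy you attribute to them; they concern a single geodesic between two elements of $H$ and produce a shorter representative modulo $K_H$, not a statement that $\gamma_N$ stays uniformly close to every coset $g_iH$.

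The actual lower bound in the paper comes from counting \emph{ribs} in the links of the punctures of the skeletal filling: each interior puncture's link represents a conjugate of a nontrivial element of some $N_i$, so if $F$ is chosen so that every such element has $P_i$-length greater than $2^{L_2}(24\cdot 2^{4C_h+3}+24)$ (inequality \eqref{long}), each link needs many ribs, contradicting the upper bound $A(\phi)\le 24p$ of \eqref{upperbound}. Note that this forces $F$ to be measured by word length \emph{inside the $P_i$}, with an exponential factor $2^{L_2}$ reflecting horoball geometry; a ball in $G$ of radius ``a suitable multiple of $D$'' is not the right finite set. The remaining subclaims --- that there are no reducing arcs (Subclaim \ref{subclaim:noRA}), that no link meets the $\tilde b_i$-side (which is precisely where the $H$-filling hypothesis enters, Subclaim \ref{sc:nob}), and that a link meeting the $\tilde a$-side yields a shorter lift of $a$ --- constitute the substance of the proof and have no counterpart in your sketch. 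Your closing pigeonhole step is unobjectionable in itself, but it is only reached via the unsupported ``surviving case.''
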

\begin{proof}
Suppose $H<G$ is the height $k$ quasi-convex subgroup and that $H'<G'$
is the image after filling along finite index subgroups of the
malnormal core of $H$.  The filling map from $G$ to $G'$ will be
called $\pi$.  The kernel $N$ of $\pi$ will be normally generated by some
collection of filling kernels $N_1,\ldots,N_m$, each normal in some
element of the peripheral structure on $G$ induced by $H$, and each
contained in $H$.  

We must show that if some intersection of
conjugates of $H'$ is infinite, then this
intersection can be lifted back up to $G$.  In other words, we will
show that infinite
order elements of the intersection of essentially distinct conjugates
of $H'$ are always images of infinite order elements of the
intersection of essentially distinct conjugates of $H$.

We choose some (compatible) generating set $S$ for $G$ so that
$X=X(G,\mc{P},S)$ is $\delta$-hyperbolic.  With respect to this
choice, $H$ is $\lambda$-relatively quasi-convex for some $\lambda$.  By
rechoosing $\delta$, we can assume that $\lambda < \delta$.  We also
assume, as in \cite{rhds} that $\delta$ is an integer greater than or
equal to $100$.  All
constants from \cite{rhds} will be in terms of this re-chosen $\delta$
for $X(G,\mc{P},S)$.  In particular, the auxiliary constants $K =
10\delta$, $L_1=1000\delta$ and $L_2 = 3000\delta$ will be used in
the argument below.

Let $C_h= K + 12\delta + 9$ be the upper bound on the Hausdorff distance
between a geodesic and the preferred path with the same endpoints, from 
\cite[Corollary 5.12]{rhds}.  
For each $i$ let $F_i$ be the ball of $P_i$-radius 
$2^{L_2}(24\cdot 2^{4 C_h + 3}+24)$ about $1$ in $P_i$, and let $F = (\cup_i
F_i)\smallsetminus \{1\}$.

Now fix a hyperbolic $H$-filling
$G\stackrel{\pi}{\longrightarrow}\bar{G}$ of $G$ so that for each $i$,
$N_i$ is finite index in $P_i$ and so that $N_i\cap F = \emptyset$ for
each $i$.
In other words,
for every nontrivial $n$ in any $N_i$, the length in $P_i$ satisfies
\begin{equation}\label{long} 
|n|_{P_i} >  2^{L_2}(24\cdot 2^{4 C_h + 3}+24) .
\end{equation}

Let $N = \ker \pi$.
Any element $g\in N$ is a product in $G$ of conjugates of
elements of $\cup_i N_i$.  We define the \emph{$N$-area} as the smallest number of
such conjugates needed to write $g$.

In order to derive a contradiction, assume that $\pi(H)$ has height at
least $k$.  Thus $k$ essentially distinct conjugates of $\pi(H)$
intersect in an infinite (and quasi-convex) subgroup of
$\bar{G}$.  This subgroup is infinite and hyperbolic, and so
it must contain an element of infinite order.  It follows that
$\pi(H)$ contains infinite order elements $a$ and $b_i$ for 
$i\in\{2,\ldots,k\}$ and essentially distinct $\{1,g_2,\ldots,g_k\}$ so
that
\begin{equation} \label{annulus}
 a = g_i b_i g_i^{-1}                             
\end{equation}
for each $i$.

Fix a lift $\twid{a}$ of $a$ closest to $1$ in $X$, subject to
the condition that $\twid{a}\in H$.   Now for each $i\in
\{2,\ldots,k\}$ choose some $\twid{g}_i$ and $\twid{b}_i$ subject to
the conditions:
\begin{enumerate}
\item[(C1)] $\twid{g}_i\in \pi^{-1}(g_i)$,
\item[(C2)] $\twid{b}_i\in \pi^{-1}(b_i)\cap H$, and
\item[(C3)] $\twid{a}^{-1} \twid{g}_i \twid{b}_i\twid{g}_i^{-1}$ has
  minimal $N$-area for all choices of $\twid{g}_i$ and
  $\twid{b}_i$ satisfying conditions (C1) and (C2).
\end{enumerate}

\begin{claim}\label{claim:zeroarea}
For each $i$, $\twid{a}^{-1} \twid{g}_i \twid{b}_i\twid{g}_i^{-1}$ has
$N$-area zero.
\end{claim}

If the word $\twid{a}^{-1} \twid{g}_i \twid{b}_i\twid{g}_i^{-1}$ has
$N$-area zero, then it is equal in $G$ to $1$.  Therefore, the claim
implies that $\twid{a} = \twid{g}_i \twid{b}_i\twid{g}_i^{-1}$ in $G$
for each $i$.

The claim implies the theorem: Certainly, the elements
$\{1,\twid{g}_2,\ldots,\twid{g}_k\}$ are essentially distinct in $G$.
Because each $\twid{b}_i$ is in $H$, 
the conjugates $H,H^{\twid{g}_2},\ldots,H^{\twid{g}_k}$ all
contain the element $\twid{a}$, and so $H\cap H^{\twid{g}_2}\cap\cdots
\cap H^{\twid{g}_k}$ is infinite.  Since $H$ has height $k$, the
subgroup $H\cap H^{\twid{g}_2}\cap\cdots \cap H^{\twid{g}_k}$ 
is a conjugate of $P_j$ for some $j$.  Since $N_j$ was chosen to be
finite index in $P_j$, this implies that $a=\pi(\twid{a})$ has finite
order in $\bar{G}$, contradicting the original choice of
$a$.

\begin{proof}[Proof of Claim \ref{claim:zeroarea}:]
If the equation $\twid{a}^{-1}
\twid{g}_i \twid{b}_i\twid{g}_i^{-1}$ has $N$-area $p>0$, then there
is some equation
\begin{equation}\label{puncdisk}
\twid{a}^{-1} \twid{g}_i \twid{b}_i\twid{g}_i^{-1} = \prod_{j=1}^p
\alpha_j n_j \alpha_j^{-1}
\end{equation}
with each $n_j\in N_{k_j}$ for some $k_j$.  This equality can be
represented by a punctured disk (as in \cite[Part 2]{rhds}) with
boundary labelled $\twid{a}^{-1} \twid{g}_i
\twid{b}_i\twid{g}_i^{-1}$.  There are two sub-segments of the boundary
of this disk labelled $\twid{g}_i$ and $\twid{g}_i^{-1}$.  Gluing
these together yields an annulus, again with $p$ punctures, as in
Figure \ref{f:bliftnoT}.  
\begin{figure}[htbp]
\begin{center}
\input{bliftnoT.pdf_t}
\caption{The punctured annulus $\check{\Sigma}$.}
\label{f:bliftnoT}
\end{center}
\end{figure}
Again as
in \cite[Part 2]{rhds}, there is a proper map from this
punctured annulus into $X/G$, so that labelled sub-segments of the
boundary go to loops representing those elements of $G$ described by
their labels.  The distinguished arc labelled $\twid{g}_i$ is sent by
$\phi$ to a loop representing $\twid{g}_i$.  To be
consistent with the notation of Part 2 of \cite{rhds}, we refer to the
punctured annulus as $\check{\Sigma}$, and the proper map to $X/G$ as
$\check{\phi}$.

We define a \emph{reducing arc} for $\check{\phi}$ to be a proper,
essential embedding $\sigma\co \R\to \check{\Sigma}$, so that
$\check{\phi}\circ\sigma$ can be properly homotoped to miss any given
compact subset of $X/G$.

\begin{subclaim} \label{subclaim:noRA}
There are no reducing arcs for $\check{\phi}$.
\end{subclaim}
\begin{proof}
We argue by contradiction.  Suppose that $\sigma\co\R\to\check{\Sigma}$ is
a reducing arc.  

\noindent{\bf Case 1:} We suppose first that the closure of the image
of $\sigma$ in the unpunctured annulus is a homotopically nontrivial
loop. In this case, the surface $\check{\Sigma}$ may be cut along the
image of $\sigma$ to yield a pair of surfaces.  The half of the
surface with boundary labelled $\twid{a}$ represents a proof that
$a=\pi(\twid{a})$ is parabolic in $\bar{G}$.  Since every parabolic
element of $\bar{G}$ has finite order, this is a contradiction.

\noindent{\bf Case 2:} Now suppose that the image of $\sigma$ does not
intersect the distinguished arc in $\check{\Sigma}$ labelled
$\twid{g}_i$.  We may argue very much as in the proof of Claim 9.2 in
the proof of Theorem 9.1 of \cite{rhds}:  The case that $\sigma$
connects two distinct punctures is the same as Case 1 of that argument, and
the case that $\sigma$ connects a puncture to itself is the same as
Case 4 of that argument, except
that
if the reducing arc $\sigma$ represents a peripheral element which
is not in $N_i$ then the contradiction is to the conclusion of Theorem 9.1
of \cite{rhds},
rather than to the minimality of the diagram.  Since 
\[2^{L_2}(24\cdot 2^{4 C_h + 3}+24) > 12\cdot 2^{L_2},\]
the conclusion of Theorem 9.1 of \cite{rhds} \emph{does} hold.
Cases 2 and 3 of the proof from \cite[Claim 9.2]{rhds} do not occur.
The upshot here is that if a reducing arc appeared, we
would be able to choose a new expression of the form \eqref{puncdisk}
with smaller $N$-area by performing a ``boundary reduction'' of some
kind.

\noindent{\bf Case 3:} Finally, we suppose that the image of $\sigma$
intersects the arc labelled $\twid{g}_i$, but we are not in Case 1.  

There is a natural basepoint $\bar{1}$ in $X/G$, which is the image of 
$1 \in G \subset X$ under the quotient map (and also, of course, the image
of any other group element). There is a canonical identification between
$\pi_1(X/G,\bar{1})$ with $G$.
Let $\gamma : I \to \check{\Sigma}$ be any arc with the same endpoints as the
distinguished arc labelled by $\twid{g}_i$.  Then $\check{\phi} \circ \gamma$ is
a loop in $X/G$ based at $\bar{1}$ and so determines a unique element
$g_\gamma$ of $G$.  The identity
\[	a^{-1} \pi(g_\gamma) b_i \pi(g_\gamma) = 1	,	\]
always holds in $\bar{G}$, and has $N$-area at most $p$.

Since we are not in Case 1, there is an arc $\gamma$ in $\check{\Sigma}$,
with the same endpoints as the distinguished arc, which does not intersect
$\sigma$.  We are now in Case 2.

This completes the proof of Subclaim \ref{subclaim:noRA}.
\end{proof}

Choose a (partially ideal) triangulation $\mc{T}$ of the punctured annulus $\check{\Sigma}$
whose vertices are the endpoints of the distinguished arc and the punctures.  There
are $2p+2$ triangles in such a triangulation.

Since there are no reducing arcs, the hypotheses of \cite[Lemma 8.6]{rhds} are
satisfied.  Let
\[\straightedge{\extend{\phi}}\co\extend{\Sigma}\to X/K\]
be the map from \cite[Lemma 8.6]{rhds}, which sends each edge of
$\mc{T}$ to a preferred path, and let 
\[ \skelmap \co \skelfill \to X/G \cup (\Hbound)/G, \] 
be the map from \cite[Remark 8.11]{rhds}, where it is called
$\ddot{\check{\phi}}_{\mathcal{T}}$. (Elements of $(\Hbound)/G$ are
$G$-orbits of horoball centers, and are in one to one correspondence
with $\mc{P}$.)
We note some facts about the skeletal filling
$\skelfill$ and the map
$\skelmap$:
\begin{enumerate}
\item If $\bar{\Sigma}$ is $\check{\Sigma}$ with punctures filled in,
  then the $1$-complex $\skelfill$ embeds naturally in $\bar{\Sigma}$
  so that every edge is either
  \begin{enumerate}
  \item part of one of the edges from $\mc{T}$,
  \item a \emph{rib} (with image under $\straightedge{\extend{\phi}}$ a
    horizontal edge at depth $L_2$), or
  \item a \emph{ligament} (with image under
    $\straightedge{\extend{\phi}}$ a vertex at depth $L_2$),
  \end{enumerate}
  and every vertex is either
  \begin{enumerate}
  \item coincident with a filled-in puncture of $\bar{\Sigma}$, 
  \item a vertex of $\mathcal{T}$, or
  \item the endpoint of one or two ribs or ligaments.
  \end{enumerate}
  The last two types of vertices will be called \emph{ordinary} vertices.
\item Every vertex $v$ coming from a puncture has a \emph{link}, which
  is a circle in $\skelfill$ composed of ribs, ligaments, and possibly
  some sub-segments of edges of $\mc{T}$ in the boundary of
  $\check{\Sigma}$. (See \cite[Definition 8.12]{rhds} for the precise
  definition.)  The map $\skelmap$ sends the entire link to that part
  of $X/G$ at depth $L_2$ or more.  A puncture is called
  \emph{interior} if its link is composed entirely of ribs and
  ligaments.  Otherwise it is called \emph{exterior}.
\item Every path between ordinary vertices is sent by $\skelmap$ to a
  path in $X/G$ which is either a based loop at $\bar{1}$, or can be
  made into one in a canonical way by adding vertical segments.  Thus
  any path between ordinary vertices in $\skelfill$ determines an
  element of $G$.
\item\label{concat} The group element determined by the concatenation
  of paths between ordinary vertices in $\skelfill$ is the product of
  those determined by the paths.
\item\label{kernel} The group element determined by a loop around the
  link of a vertex is always an element of $N_i$ for some $i$.
  Different choices of starting point for the loop give rise to
  elements of $N_i$ which are conjugate in $P_i$.
\item Two paths homotopic in $\bar{\Sigma}$ rel their endpoints
  determine the same element of $\bar{G}$. (This follows immediately
  from \eqref{concat} and \eqref{kernel}.)
\end{enumerate}

Figure \ref{f:blift} shows an example of what $\skelfill$ might look
like, if there were three exterior and no interior punctures.
\begin{figure}[htbp]
\begin{center}
\input{blift.pdf_t}
\caption{Dark edges are either ribs or ligaments.  Hollow circles are
  vertices of $\skelfill$ coming from punctures.  The (exterior)
  puncture at the right has a link composed of $7$ ribs or ligaments,
  and a single edge which is part of the edge of $\mc{T}$ labelled by
  $\twid{b}_i$.}
\label{f:blift}
\end{center}
\end{figure}

If $T$ is a $2$-simplex of $\mc{T}$, then $\phi|_{\partial T}$ lifts to a preferred
triangle $\widetilde{\phi|_{\partial T}}\co\partial T\to X$.  Let $R(T)$ be the number
of ribs in $\skel(\widetilde{\phi|_{\partial T}})$, and note that this number does not
depend on the lift chosen.
\cite[Corollary 5.38]{rhds} implies that $R(T)\leq 6$.

Let 
\[A(\phi) = \sum_{T\in \mathcal{T}} R(T).\]
\cite[Corollary 5.38]{rhds} immediately implies
\begin{equation}\label{upperbound}
A(\phi)\leq 6 (2p+2) \le 24p.
\end{equation}

Let $x$ be a puncture.  As remarked above, the link $\Lk (x)$ is an
embedded loop in $\check{\Sigma}$ whose image in $X/G$ lies entirely
at or below depth $L_2$.  By joining an arbitrarily chosen basepoint
of $\Lk (x)$ to $\bar{1}$ in the canonical way, we obtain an element
of the filling kernel $N_i$, contained in the peripheral subgroup
$P_i$ for some $i$.  

\begin{subclaim}\label{sc:notallint}
Not all punctures of $\check{\Sigma}$ are interior.
\end{subclaim}
\begin{proof}
Suppose that all punctures of $\check{\Sigma}$ were interior.
In this case, the image of each link $\Lk (x)$ lies entirely at depth $L_2$ in $X/G$,
and represents a conjugacy class of element of $N_i$ for some $i$.  
The length of $\check{\phi} (\Lk (x))$ is the number of ribs in $\Lk (x)$.  Therefore
there are at least $24\cdot 2^{4C_h + 3} + 24$ ribs in each link.
Summing over all the links, there are more than $24p$ ribs, which contradicts
\eqref{upperbound} above.

This proves that not all punctures are interior.
\end{proof}

\begin{subclaim}\label{sc:nob}
No link of a puncture hits the boundary
component of $\check{\Sigma}$ labelled $\twid{b}_i$.  
\end{subclaim}
\begin{proof}
By way of contradiction, we assume that there is some puncture $x$ so
that $\Lk (x)$
intersects the boundary
component of $\check{\Sigma}$ labelled $\twid{b}_i$ in a sub-segment $I$.
The idea here is that
if it did, we would be able to replace $\twid{b}_i$ by some
$\twid{b}_i'$ so that $\twid{a}^{-1} \twid{g}_i
\twid{b}_i\twid{g}_i^{-1}$ had smaller $N$-area.  Our assumption
that $\lambda$ is small with respect to $\delta$ (and therefore with
respect to $L_2$) ensures that this new lift $\twid{b}_i'$ still lies
in $H$.

We will choose $\twid{b}_i'$ to be the element determined by the path
in $\skelfill$ which is obtained from the path
labelled $\twid{b}_i$ by replacing $I$ with its complement in $\Lk(x)$.
Let $\beta$ be the group element represented by the part of the edge
labelled by $\twid{b}_i$ which precedes $I$, and let $n\in N_j$ be the
element represented by the loop around $\Lk(x)$ starting at the
beginning of $I$ and going around once, clockwise.  It is clear that 
\[\twid{b}_i' = \beta n \beta^{-1} \twid{b}_i \]
maps to $b_i$, and has $N$-area less than that of $\twid{b}_i$.  To
establish the subclaim, it
remains only to establish that $\twid{b}_i'\in H$.

The restriction of the map $\straightedge{\extend{\phi}}\co
\check{\Sigma}\to X/G$ to the edge $e$
of $\check{\Sigma}$ labelled $\twid{b}_i$ can be lifted to a preferred
path $\gamma\co e \to X$ joining $1$ to $\twid{b}_i$.  The map
$\gamma$ sends the
sub-segment $I$ into the $L_2$-horoball inside some $0$-horoball $A$,
corresponding to some coset $tP_j$ of one of the peripheral subgroups.  

As in the proof of Lemma \ref{l:technical2}, we first argue that some
nontrivial element of $H$ stabilizes the horoball $A$.  The preferred
path from $1$ to $\twid{b}_i$ must penetrate at least $L_2$ into the
horoball $A$.  It follows that a regular geodesic $\gamma$ penetrates
$A$ at least to depth $L_2-C_h > 2500\delta $.  Let $g_1$ and $g_2$ be
the group elements in $tP_j$ through which this geodesic passes.  By
the foregoing, we must have $d_X(g_1,g_2) > 5000\delta$.  Moreover,
the geodesic $\gamma$ passes through the vertices $(g_1,\lambda+1)$
and $(g_2,\lambda+1)$ in $A$.  The $\lambda$-relative quasi-convexity
of $H$ implies that there are vertices $z_1=\check\iota\left((s_1
D_{j_1},h_1,n_1)\right)$ and $z_2=\check\iota\left((s_2
D_{j_2},h_2,n_2)\right)$  within $\lambda$ of $(g_1,\lambda+1)$ and
$(g_2,\lambda+1)$, respectively.  Since $g_1$ and $g_2$ are so far
apart (recall we have made the assumption $\lambda\leq \delta$), the
element $h_2h_1^{-1}$ is non-trivial.  By the $H$-equivariance of
$\check\iota$, this element stabilizes $A$, i.e. $H\cap tP_jt^{-1}$ is
non-trivial.

The element $\beta = t p $ for some $p\in P_j$, and so 
\begin{eqnarray*}
 \twid{b}_i' & = & \beta n \beta^{-1} \twid{b}_i \\
 & = & t (p n p^{-1}) t^{-1} \twid{b}_i.
\end{eqnarray*}
Since $N_j$ is normal in $P_j$, the element $pnp^{-1}\in N_j$.
Because of the assumption that $\bar{G}$ is an $H$-filling of $G$, 
the subgroup $tN_jt^{-1}$ lies in $H$.  It follows that
$\twid{b}_i'\in H$, completing the proof of the subclaim.
\end{proof}

By Subclaim \ref{sc:notallint}, some puncture or punctures are exterior; by Subclaim
\ref{sc:nob}, the links of the exterior punctures all intersect the
edge of $\mc{T}$ labelled $\twid{a}$, and miss the edge of $\mc{T}$
labelled $\twid{b}_i$.
We will
show that if any link of a puncture
hits the side labelled $\twid{a}$, we can find another lift of $a$
whose $X$-length is smaller, contradicting our initial choice of a
shortest lift.

Let $r_j$ be the number of ribs in the link of the $j$'th puncture.
We have
\[\sum_j r_j \leq 24 p.\]
Also associated to the puncture is a sub-segment of the preferred path
from $1$ to $\twid{a}$ passing through an $L_2$-horoball.
Specifically, it passes through some points $(x_j,L_2)$ and
$(y_j,L_2)$ in a horoball based on $P_{k_j}$.  Let $q_j$ be the
distance in the $L_2$-horosphere between $v_j=(x_j,L_2)$ and $w_j=(y_j,L_2)$.
The ribs in the link of the $j$'th puncture give an edge-path in this
horosphere from $(x_j,L_2)$ to $w_j' =(y_j z_j,L_2)$ for some $z_j\in N_{k_j}$.
We therefore have
\[r_j+q_j \geq 2^{-L_2} C_N\]
for each $j$.
(If we are looking at an interior puncture, we have $q_j = 0$.)
Thus 
\[\sum_j r_j+q_j \geq 2^{-L_2} C_N p \]
and 
\[\sum_j q_j \geq (2^{-L_2} C_N - 24) p.\]

We now claim that for some $j$, 
\begin{equation}\label{somej}
q_j/r_j > 2^R.
\end{equation}  
Indeed, if this is
never the case, then
\[ \sum_j q_j \leq 2^R \sum_j r_j \leq 24 \cdot 2^R p,\]
which implies that 
\[ 2^{-L_2} C_N - 24 \leq 24\cdot 2^R, \]
and so 
\[ C_N \leq 2^{L_2} ( 24 \cdot 2^R + 24 ), \]
contradicting \eqref{long}.

Choose some such $j$, and consider the lift $\twid{a}'$ of $a$
obtained by excising the word representing $x_j^{-1} y_j$ and
replacing it by a word representing $x_j^{-1} y_j z_j$.
The distance from $1$ to $\twid{a}'$ is at most
\[ d(1,v_j)+d(v_j,w_j')+d(w_j,\twid{a}'), \]
which is at least $(\log_2(q_j)-\log_2(r_j)) - 4 C_h$ less than the
distance from $1$ to $\twid{a}$ (for this probably a picture should be
drawn).  Equation \eqref{somej} implies that $\twid{a}'$ is actually
shorter than $\twid{a}$, contradicting our initial choice of
$\twid{a}$.

This completes the proof of Claim \ref{claim:zeroarea}.
\end{proof}

By Claim \ref{claim:zeroarea}, there are lifts $\twid{a}$,
$\twid{g}_i$, $\twid{b}_i$ of $a$, $g_i$ and $b_i$ respectively
satisfying the conditions (C1--C3) and with
\[ \twid{a} = \twid{g}_i \twid{b}_i \twid{g}_i^{-1} \]
for each $i\in \{2,\ldots,k\}$.  Since $\{1,g_2,\ldots,g_k\}$ are
essentially distinct, so are the lifts
$\{1,\twid{g}_2,\ldots,\twid{g}_k\}$, and so $\twid{a}$ lies in the
intersection of $k$ essentially distinct conjugates of $H$.  
Since $\twid{a}$
has infinite order, and $H$ has height $k$,
it follows that $\twid{a}\in P_j$ for some $j$.  But since $N_j$ has finite
index in $P_j$, the image $a$ of $\twid{a}$ in $\bar{G}$ must have
finite order.  This contradiction completes the proof of Theorem
\ref{t:heightdecrease}.
\end{proof}

\subsection{The proof of Theorem \ref{t:technical}}

Let $G$ be a torsion-free residually finite hyperbolic group, let
$H$ be a quasi-convex subgroup of $G$ of height $k$ and let $g \in G \smallsetminus H$. 

Let $\mc{D}$ be the malnormal core of $H$, and let $\mc{P}$ be the
peripheral structure on $G$ induced by $H$.

By Proposition \ref{p:induced}, $G$ is hyperbolic relative to $\mc{P}$,
$H$ is hyperbolic relative to $\mc{D}$ and $H$ is $\lambda$-relatively
quasi-convex in $G$ for some $\lambda$.

Since all the elements of $\mc{P}$ are subgroups of $G$, they are
residually finite.  Thus they contain finite-index normal subgroups
$\{ N_i \}$ which induce an $H$-filling of $G$ satisfying the
hypotheses of Propositions \ref{p:fillqc}, \ref{p:nocollide} and 
Theorem \ref{t:heightdecrease}.

We claim that the group $\bar{G} = G(N_1,\ldots,N_m)$ satisfies the 
conclusion of Theorem \ref{t:technical}. Let $\eta : G \to \bar{G}$ be the
canonical quotient map.  By Proposition \ref{p:fillqc}, $\eta(H)$ is
relatively quasi-convex.  Since the peripheral subgroups of $\bar{G}$
are finite, Lemma \ref{l:finiteparabolics} implies that $\eta(H)$ is
actually quasi-convex in the hyperbolic group $\bar{G}$.
Further, $\eta(g) \not\in \eta(H)$ by Proposition \ref{p:nocollide} and
the height of $\eta(H)$ is at most $k-1$ by Theorem \ref{t:heightdecrease}.

\section{Conclusion}

It would be nice to extend the main result of this paper to groups which
are residually hyperbolic, that is  to groups $G$ which for any element
$1\neq g\in G$, there is a homomorphism $\varphi: G \to H$ onto a hyperbolic
group such that $\varphi(g)\neq 1$. A natural class of such groups
are groups $G$ which are relatively hyperbolic, relative to
a finite family
$\mc{P}=\{P_1, \ldots, P_n\}$ of finitely
generated residually finite subgroups of $G$.
If hyperbolic groups are residually finite, then these groups
are also residually finite, by performing finite fillings on the peripheral
subgroups $\mc{P}$ using Theorem \ref{t:rhds}. 
To generalize Theorem \ref{t:main} to this class
of groups, we would need to identify the analogue of quasi-convex subgroups
to separate. It would be natural to consider relatively quasi-convex subgroups
of $G$ (as in Definition 3.10). To be somewhat conservative, though, we will
consider the case that the groups $P_i$ are nilpotent. 

\begin{conjecture}
Suppose that hyperbolic groups are residually finite. Let $G$ be a group
which is hyperbolic relative to the peripheral system 
$\mc{P}=\{P_1,\ldots, P_n\}$. 
If $P_i$ is finitely generated and virtually nilpotent for each $i$,
then relatively quasi-convex subgroups 
of $G$ are separable. 
\end{conjecture}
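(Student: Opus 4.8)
The plan is to run the machine behind Theorem~\ref{t:main} in the relative setting, with a double induction: an outer induction on a complexity measure of the peripheral subgroups --- say the multiset of their Hirsch lengths, refined by nilpotency class --- and an inner induction on the relative height of $H$. Since finitely generated virtually nilpotent groups are Noetherian and closed under quotients, every quotient that arises again has finitely generated virtually nilpotent peripherals, and replacing a peripheral by a \emph{proper} quotient strictly lowers the complexity; thus the outer induction terminates with peripheral subgroups that are finite, at which point $G$ is word-hyperbolic by Lemma~\ref{l:finiteparabolics} and Theorem~\ref{t:main} applies directly.

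First I would reduce to the case that $G$ is torsion-free, exactly as in the proof of Theorem~\ref{t:main}: finitely generated virtually nilpotent groups are residually finite, so finite fillings (Theorem~\ref{t:rhds}) show $G$ is residually finite, and a relatively hyperbolic group has only finitely many conjugacy classes of finite subgroups, so $G$ is virtually torsion-free; then pass to a torsion-free finite-index subgroup, intersect it with $H$, and argue with the profinite topology as before. Next I would manufacture an $H$-adapted peripheral structure $\mc{P}'$ on $G$ by the procedure of the subsection ``Induced peripheral structures'': from the infinite intersections of essentially distinct conjugates of $H$ extract a ``malnormal core'' $\mc{D}$ of $H$ (a finite collection of relatively quasi-convex subgroups, each contained in a conjugate of some $P_i$, hence finitely generated virtually nilpotent), take commensurators in $G$ of its members, and adjoin the $P_i$ not already accounted for. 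For this to make sense one needs the analogue for relatively hyperbolic groups of the finite-height theorem of Gitik--Mitra--Rips--Sageev (available in the literature) together with an analogue of Proposition~\ref{p:induced}: with respect to $\mc{P}'$, the group $G$ is still hyperbolic relative to a collection of finitely generated virtually nilpotent subgroups, and $H$ is still relatively quasi-convex and now has finite relative height.

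Given $g\in G\smallsetminus H$, I would then pick normal subgroups $N_i\lhd P_i'$ inducing an $H$-filling $G\to\bar G=G(N_1,\ldots,N_m)$ which is long enough for Propositions~\ref{p:fillqc} and~\ref{p:nocollide} (i.e. injective on a large enough ball of $X$), chosen so that each $P_i'/N_i$ has strictly smaller complexity. The proofs of Proposition~\ref{p:fillqc}, Proposition~\ref{p:nocollide}, and a relative version of Theorem~\ref{t:heightdecrease} use only relative hyperbolicity and the $H$-filling hypothesis --- never the finiteness of the peripherals --- the sole change being that in the height-decrease argument one now concludes not that the surviving infinite intersection produces a finite-order element of $\bar G$ but that it produces a \emph{parabolic} element, which (after relabelling ``height'' as ``relative height'') is the contradiction one wants. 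Iterating --- inner induction on relative height, outer on peripheral complexity --- one eventually reaches a hyperbolic quotient $G\to K$ in which the image of $H$ is quasi-convex and the image of $g$ lies outside it; Theorem~\ref{t:main} then separates the two in a finite quotient of $K$, which pulls back to separate $g$ from $H$ in $G$.

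The main obstacle --- and the reason this remains a conjecture --- is the choice of filling kernels in the previous paragraph. Being an $H$-filling forces $N_i^{g}$ to lie inside a conjugate of a peripheral of $H$, hence inside $H\cap P_i^{\prime g}$ up to finite index, for every $g$ with $H\cap P_i^{\prime g}$ nontrivial. When $H\cap P_i^{\prime g}$ is finite-index in $P_i^{\prime g}$ (for instance when $H$ is ``full''), one simply takes each $N_i$ finite-index in $P_i'$ and contained, after conjugation, in the finitely many relevant intersections, and the hyperbolic argument goes through verbatim. In general, however, $H\cap P_i^{\prime g}$ can be an infinite-index subgroup of the nilpotent group $P_i^{\prime g}$ whose normal core in $P_i^{\prime g}$ is trivial --- this already happens for the cyclic subgroup $\langle x\rangle$ inside the integral Heisenberg group --- so no nontrivial $N_i\lhd P_i'$ meets the constraint, and one cannot circumvent this by refining $\mc{P}'$ further, since a finitely generated nilpotent group that is not virtually cyclic admits no relatively hyperbolic structure with proper peripheral subgroups. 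Overcoming this seems to require a genuinely new ingredient: either interleaving the Dehn-filling induction with a direct appeal to the subgroup separability of the nilpotent peripherals, first peeling off the parabolic part of the separation of $g$ from $H$ in an auxiliary finite quotient of $G$ obtained from finite fillings, or a more flexible Dehn filling theory in which a peripheral may be quotiented by a subgroup it merely commensurates rather than normalizes.
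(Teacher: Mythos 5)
The statement you are asked about is stated in the paper only as a conjecture; the authors give no proof, and your proposal does not supply one either --- you say so yourself in the final paragraph. So the honest verdict is that there is a genuine gap, and it is exactly the one you name. The outer/inner induction scheme, the reduction to the torsion-free case via finite fillings and residual finiteness of virtually nilpotent groups, the construction of an $H$-adapted peripheral structure, and the observation that Propositions \ref{p:fillqc}, \ref{p:nocollide} and Theorem \ref{t:heightdecrease} use relative hyperbolicity and the $H$-filling hypothesis rather than finiteness of the peripherals --- all of this is the natural extension of the paper's machinery and is consistent with what the authors themselves indicate in the Conclusion. But the argument breaks precisely where you say it does: the definition of an $H$-filling requires $N_i^{g}\subseteq sD_js^{-1}\subseteq H$ whenever $H\cap P_i^{g}$ is nontrivial, and in the genuinely relative setting $H\cap P_i^{g}$ can be an infinite-index subgroup of a nilpotent peripheral whose normal core in that peripheral is trivial (your $\langle x\rangle$ inside the integral Heisenberg group is a correct witness: any normal subgroup of the Heisenberg group contained in $\langle x\rangle$ would have to contain $[x^k,y]=z^k$, forcing $k=0$). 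In the hyperbolic case this never arises because Observation \ref{o:induced} guarantees $H\cap P$ is finite index in every induced peripheral $P$; that observation is exactly what fails here, and no refinement of the peripheral structure can restore it, for the reason you give.

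Your closing suggestion --- first using separability inside the nilpotent peripherals to replace $H$ by a subgroup whose intersections with peripherals are ``full'' (finite index in every peripheral conjugate they meet nontrivially), and only then running the Dehn-filling induction --- is in fact the ingredient used in the later literature that resolved this conjecture, so your diagnosis of both the obstruction and the likely remedy is accurate. As a referee's matter, though, the proposal as written establishes nothing beyond the special case where $H$ is already full relative to $\mc{P}'$, and even there the relative analogues of Theorem \ref{t:finiteheight} and Proposition \ref{p:induced} are invoked without proof; the general statement remains, as in the paper, a conjecture.
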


This conjecture would extend Theorem \ref{t:main} to non-uniform lattices
in rank one symmetric spaces. This conjecture would also suffice to prove
that if hyperbolic groups are RF, then Kleinian groups are LERF. 

\small
\def\cprime{$'$} \providecommand\url[1]{\texttt{#1}}

\end{document}